\documentclass[a4paper, 11pt]{amsart}

\usepackage[utf8]{inputenc}
\usepackage[all,cmtip]{xy}
\usepackage[english]{babel}
\usepackage{mathtools}
\usepackage{graphicx}
\usepackage{multicol}

\usepackage{amsmath,amssymb,amsthm,mathrsfs}
\usepackage[alphabetic]{amsrefs}
\let\amsrtimes=\rtimes

\usepackage{xfrac}
\usepackage{pifont}

\usepackage{array,tabularx}
\usepackage{xcolor}

\usepackage{tikz,tikz-cd}
\usetikzlibrary{positioning, shapes.geometric}
\usetikzlibrary{arrows,arrows.meta}
\usetikzlibrary{calc}
\tikzcdset{arrow style=tikz, diagrams={>={Computer Modern Rightarrow[width=5.5pt,length=3pt]}}}

\usepackage{enumitem}
\usepackage{cleveref}

\usepackage[retainorgcmds]{IEEEtrantools}

\theoremstyle{plain}
\newtheorem{thm}{Theorem}[section] 
\newtheorem*{thm*}{Theorem}
\newtheorem*{mainthm}{Main Theorem}
\newtheorem{prop}[thm]{Proposition}

\newtheorem{lem}[thm]{Lemma}

\newtheorem{hypothesis}[thm]{Hypothesis}

\counterwithin{equation}{section}

\theoremstyle{definition}
\newtheorem{defn}[thm]{Definition}

\newtheorem{rem}[thm]{Remark}
\newtheorem*{rem*}{Remark}

\newcommand*{\myproofname}{Proof of Theorem \ref{1thm:inertness}}

\newcommand*{\myproofnames}{Proof of Proposition \ref{prop:pdhmlgy}}

\newcommand{\Z}{\varmathbb{Z}}
\newcommand{\Q}{\varmathbb{Q}}

\newcommand{\C}{\varmathbb{C}}

\usepackage{geometry}
\usepackage{libertine}
\usepackage[libertine]{newtxmath}
\renewcommand{\mathbb}{\varmathbb}

\input xypic 
\input xy 
\xyoption{all} 
\usepackage{amssymb} 
\usepackage{bbm}

\newcommand{\conn}{\ensuremath{\#}}

\newcounter{bean}

\newcommand{\qqed}{\hfill\Box}

\title{Pullbacks of Sphere Fibrations over Connected Sums}

\author{Sebastian Chenery}
\address{(Chenery) University of Bristol, School of Mathematics, Fry Building, Woodland Road, Bristol, BS8 1UG}
\email{seb.chenery@bristol.ac.uk}
\author{Stephen Theriault}
\address{(Theriault) University of Southampton, Mathematical Sciences, Building 54, Southampton, SO17 1BJ}
\email{s.d.theriault@soton.ac.uk}

\subjclass[2020]{Primary 57N65; Secondary 57P10, 55P15}
\keywords{Sphere fibration, connected sum, gyration, manifold topology}

\begin{document}

\begin{abstract}
We prove conditions under which the total space of the pullback of a sphere fibration over a connected sum is homotopy equivalent to a connected sum with a gyration. Existing results of this type often depend on geometric methods. We develop new methods based only on homotopy theory, allowing for generalisations from manifolds to Poincar\'{e} Duality complexes and from integral settings to local ones. Several applications are given.   
\end{abstract}

\maketitle


\section{Introduction} 

It is natural to ask which properties of manifolds really require geometry and which depend only on topology or, more specifically, on homotopy theory. Dependence only on homotopy theory has two advantages: one is a generalisation from manifolds to Poincar\'{e} Duality complexes, which are $CW$-complexes whose cohomology has a cap product satisfying Poincar\'{e} Duality, and another is a generalisation to local settings.

In general, let $F\xrightarrow{\alpha} E\xrightarrow{g} M$ be a homotopy fibration in which all spaces have the homotopy type of Poincar\'e Duality complexes. Let \(dim(M)=n\) and \(dim(E)=m\), and taking $N$ to be another Poincar\'e Duality complex of dimension \(n\), form the connected sum $M\#N$. Define the $m$-dimensional complex $E_N$ as the pullback of $g$ and the collapsing map \(p:M\#N\rightarrow M\), giving a homotopy fibration diagram
\begin{equation}\label{dgm:main1}
    \begin{tikzcd}[row sep=1.5em, column sep = 1.5em]
        F \arrow[rr, "\alpha_N"] \arrow[dd, equal] && E_N \arrow[dd] \arrow[rr] && M\#N \arrow[dd, "p"] \\
        \\
        F \arrow[rr, "\alpha"] && E \arrow[rr, "g"] && M.
    \end{tikzcd}
\end{equation}
A natural question arises: to what extent is \(E_N\) a connected sum? 

This has been the subject of much recent study \cites{basu-ghosh, chenfib, duan, huang_theriault_blowups, huangtheriault, galaz-garcia--reiser, js}. Jeffrey and Selick gave a partial answer in the stricter setting of fibre bundles of smooth, oriented manifolds. They constructed a space $X'$ with the property that there is an isomorphism of integral homology groups \(H_k(E_N)\cong H_k(E)\oplus H_k(X')\) for \(k\) in the range $0<k<m$ \cite{js}*{Theorem 3.3}, suggesting that in certain circumstances $E_N\simeq E\conn X$ for some $m$-dimensional manifold $X$ whose puncture has the homotopy type of \(X'\). They showed that there are contexts in which such an $X$ exists and others in which it cannot. When~\(\alpha\) is null homotopic and the rational cohomologies of $E$ and $M$ have more than one generator, the first author showed that there is a rational homotopy equivalence of based loop spaces \(\Omega E_N\simeq\Omega(E\#X)\) for some Poincar\'{e} Duality complex $X$ \cite{chenfib}. 

A particularly important case is when $F$ is a circle and $\alpha$ is null homotopic. Duan~\cite{duan} showed that if $M$ is a simply-connected smooth manifold then $E_{N}$ is diffeomorphic to $E\conn X$, where $X$ is a special type of smooth manifold called a gyration, defined momentarily. Note that $\alpha$ being null homotopic implies that $F$ retracts off $\Omega M$, so if Duan's result is to be generalised to higher dimensional spheres then those spheres must be $H$-spaces. A generalisation was given by Huang and the second author~\cite{huangtheriault} to $F$ being one of $S^{1}$, $S^{3}$ or $S^{7}$. The proofs in both~\cite{duan} and~\cite{huangtheriault} involved explicitly geometric arguments. A purely homotopy theoretical proof was given in~\cite{huang_theriault_stability} for the special case in which \(F\xrightarrow{\alpha} E\xrightarrow{g} M\) is one of the Hopf fibrations \(S^{1}\rightarrow S^{2n+1}\rightarrow\mathbb{C}P^{n}\) or \(S^{3}\rightarrow S^{4n+3}\rightarrow\mathbb{H}P^{n}\), identifying $E_{N}$ up to homotopy equivalence. The purpose of this paper is to develop a more general homotopy theoretic framework that both recovers existing results and extends them to a wide range of new cases.

To describe our results a definition is needed. Given a smooth $n$-dimensional manifold $N$ and an integer $k\geq 2$ the \textit{gyration} of \(N\), written as $\mathcal{G}^{k}_{0}(N)$, is a smooth manifold of dimension $n+k-1$ obtained from a $(k-1,n)$-type surgery on $N\times S^{k-1}$. This surgery may instead be twisted by some homotopy class $\tau\in\pi_{k-1}(\mathrm{SO}(n))$, resulting in the \textit{twisted gyration} $\mathcal{G}^{k}_{\tau}(N)$, which is again a smooth manifold of dimension $(n+k-1)$. Gyrations originally arose in geometric topology in work of Gonz{\'a}lez Acu{\~n}a \cite{gonzalezacuna} and have since been used in many aspects of topology \cites{basu-ghosh,  bosio_meersseman, chenery:fico, ChenTher:gy_stab, duan, huang_theriault_stability, galaz-garcia--reiser, klpt}. 

Instead of the surgery formulation, we follow \cite{ChenTher:gy_stab} by defining gyrations via pushouts. This allows for two  generalisations. First, we are able to replace $N$ by a Poincar\'{e} Duality complex. Second, the twisting $\tau$ results in a self-map of $S^{n-1}\amsrtimes S^{k-1}$, where $A\amsrtimes B=(A\times B)/(\ast\times B)$ is a half-smash product, involving a torsion class \(S^{n+k-2} \rightarrow S^{n-1}\) lying in the stable homotopy group $\pi^{S}_{k-1}$. This is generalised to any self-map \(t_{\delta}\) of $S^{n-1}\amsrtimes S^{2k-1}$ arising from the difference between the identity map and a torsion class~\(\delta\) in $\pi^{S}_{k-1}$. The pushout in all cases produces an $(n+k-1)$-dimensional Poincar\'{e} Duality complex $\mathcal{G}^{k}_{t_{\delta}}(N)$. As our arguments are homotopy theoretic they also allow for statements that hold after localisation. In particular, any odd dimensional sphere is an $H$-space after localising away from $2$, so we may consider local cases where $F\simeq S^{2k-1}$ and $\alpha$ is null homotopic for some $k\geq 1$. 

\subsection*{Assumptions} 
We describe the cases considered and state hypotheses. Suppose there is a homotopy fibration 
\[
    S^{2k-1} \xlongrightarrow{\alpha} E \xlongrightarrow{g} M
\]
in which \(M\) is a \((2k-1)\)-connected $n$-dimensional Poincar\'{e} Duality complex for $k\geq 1$ and $n\geq 4$. Suppose that $\alpha$ is null homotopic (either integrally for $k\in\{1,2,4\}$ or after localising away from~$2$). Generalising the Hopf fibration cases mentioned above, it will be assumed that $H_{2k}(M;\mathbb{Z})\cong\oplus_{i=1}^{r}\mathbb{Z}$ for some $r\geq 1$. Note that Poincar\'{e} Duality then implies that $n\geq 4k$. The null homotopy for $\alpha$ implies that $S^{2k-1}$ corresponds to a generator of $H_{2k}(M;\mathbb{Z})$. If $r=1$ no more is needed, while if $r>1$ the argument depends on the additional assumptions that $n>4k$ and that, rationally, the other generators in $H_{2k}(M;\mathbb{Q})$ have cohomological duals that square to zero. Let $N$ be a simply-connected $n$-dimensional Poincar\'{e} Duality complex and let \(p:M\conn N\rightarrow M\) be the collapse map. Define the space $E_{N}$ and the map $\alpha_{N}$ by the homotopy pullback diagram 
    \begin{equation}\label{introEN} 
        \begin{tikzcd}[row sep=3em, column sep = 3em]
            S^{2k-1} \arrow[d, equal] \arrow[r, "\alpha_N"] & E_N \arrow[r] \arrow[d] & M\#N \arrow[d, "p"] \\
            S^{2k-1} \arrow[r, "\alpha"] & E \arrow[r, "g"] & M. 
        \end{tikzcd}
    \end{equation} 

\newpage

\begin{mainthm}\label{mainthm} 
    Suppose spaces and maps are as in~(\ref{introEN}) and satisfy the Assumptions. Then: 
    \begin{itemize} 
        \item[(i)] there is a homotopy equivalence \(E_{N}\simeq E\conn\mathcal{G}^{2k}_{t_{\delta}}(N)\) for some $\delta\in\pi^{S}_{2k-1}$; 
        \item[(ii)] $\alpha_{N}$ is null homotopic; 
        \item[(iii)] there is a homotopy equivalence \(\Omega(M\conn N)\simeq S^{2k-1}\times\Omega(E\conn\mathcal{G}_{t_{\delta}}^{2k}(N))\). 
    \end{itemize} 
    Further, after localising away from all primes that divide the order of $\pi^{S}_{2k-1}$, there are homotopy equivalences $E_{N}\simeq E\conn\mathcal{G}^{2k}_{0}(N)$ and $\Omega(N\#M)\simeq S^{2k-1}\times\Omega(E\#\mathcal{G}_0^{2k}(N))$.
\end{mainthm}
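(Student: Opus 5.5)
The plan is to work entirely with cofibrations and homotopy pullbacks, using the standard facts that, for a Poincar\'e Duality complex $P$ of dimension $d$ with $(d-1)$-skeleton $\bar P$, there is a cofibration $S^{d-1}\to\bar P\to P$, and that $\bar{(M\conn N)}\simeq\bar M\vee\bar N$.

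\textbf{Step 1: the fibre of $E_N\to M\conn N$ over the punctured pieces.} Since $\alpha$ is null homotopic, the pullback of $g$ over $\bar N$ (mapped to $M$ via $p$, hence through a point) is the trivial fibration $\bar N\times S^{2k-1}$. The pullback over $\bar M$ is the complement $\bar E$ of a tubular neighbourhood of a fibre. Working through the homotopy pushout $M\conn N=\bar M\cup_{S^{n-1}}\bar N$ and pulling it back along $E_N\to M\conn N$, we obtain $E_N$ as a homotopy pushout
\[E_N\simeq g^{-1}(\bar M)\cup_{S^{n-1}\times S^{2k-1}}(\bar N\times S^{2k-1}).\]
I would then compare this with the analogous decomposition of $E=g^{-1}(\bar M)\cup_{S^{n-1}\times S^{2k-1}}(D^n\times S^{2k-1})$. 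Both decompositions are glued along $S^{n-1}\times S^{2k-1}$. After collapsing $S^{2k-1}$, the gluing map into the $N$-side restricts on $S^{n-1}\amsrtimes S^{2k-1}$ to a map differing from the standard one by a self-map $t_\delta$, with $\delta\in\pi^S_{2k-1}$ coming from the clutching data of the trivialisation. This is where the torsion class enters: the null homotopy of $\alpha$ is unique only up to $\pi_{n-1}$ of maps into $S^{2k-1}$, which in the stable range is $\pi^S_{2k-1}$.

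\textbf{Step 2: identify the pushout with a connected sum.} Using the pushout definition of the gyration from \cite{ChenTher:gy_stab}, $\mathcal G^{2k}_{t_\delta}(N)$ is $(\bar N\times S^{2k-1})\cup(S^{n-1}\times D^{2k})$ glued via $t_\delta$. Rearranging the pushout cube shows that $E_N$ is the union of $\bar E$ and $\overline{\mathcal G^{2k}_{t_\delta}(N)}$ along $S^{n+2k-2}$, which is $E\conn\mathcal G^{2k}_{t_\delta}(N)$. This gives (i).

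The hard part is showing that $\bar E$ (the punctured $E$) appears correctly, i.e.\ that the attaching maps of top cells match. I would first try to identify $\bar E\simeq g^{-1}(\bar M)\cup(\text{cells})$ and prove $g^{-1}(\bar M)\simeq\bar E\vee S^{2k-1}$-type splittings. The hypotheses enter here: $H_{2k}(M)$ is free, and for $r>1$ we need $n>4k$ and the rational squares to vanish. I expect this is where those assumptions are consumed, through a Poincar\'e Duality and homology check that the relevant attaching maps are the expected ones, so that the resulting map is a homology isomorphism between simply connected complexes.

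\textbf{Step 3: parts (ii) and (iii).} For (ii), $\alpha_N$ factors through $\alpha$ composed with $E_N\to E$, up to homotopy, by the left square of (\ref{introEN}); since $\alpha\simeq*$, we get $\alpha_N\simeq*$. For (iii), looping the fibration $S^{2k-1}\to E_N\to M\conn N$ with null fibre inclusion gives a section of $\Omega(M\conn N)\to S^{2k-1}$. When $S^{2k-1}$ is an $H$-space (the cases $k\in\{1,2,4\}$, or away from $2$), multiplying the section by $\Omega E_N\to\Omega(M\conn N)$ gives $\Omega(M\conn N)\simeq S^{2k-1}\times\Omega E_N$. Combining with (i) yields (iii).

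\textbf{Step 4: the localised statement.} Localising away from the primes dividing $|\pi^S_{2k-1}|$ kills $\delta$. Then $t_\delta$ becomes homotopic to the identity, so $\mathcal G^{2k}_{t_\delta}(N)\simeq\mathcal G^{2k}_0(N)$ locally, and the final equivalences follow from (i) and (iii).
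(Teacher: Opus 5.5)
Your argument for (ii) runs the wrong way. The left square of~(\ref{introEN}) says $\alpha\simeq(E_N\to E)\circ\alpha_N$, so $\alpha$ factors through $\alpha_N$, not conversely. Nullity of $\alpha$ alone tells you nothing about $\alpha_N$: pulling back $S^1\to ES^1\to BS^1$ along a constant map makes the fibre inclusion essential.

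What you need is a right homotopy inverse for the connecting map of the pulled-back fibration. Lift the section $S^{2k-1}\to\Omega M$ of $\Omega M\to S^{2k-1}$ through $\Omega\overline{M}\to\Omega M$. This is possible because $\overline{M}\to M$ is $(n-1)$-connected and $2k\le n-1$. It gives $\widehat{\alpha}\simeq\ast$, hence $\alpha_N\simeq\ast$, since $\alpha_N$ factors through $\widehat{\alpha}$ (Lemma~\ref{lem:Ehatproperty2}). You also use $\widehat{\alpha}\simeq\ast$ silently in Step~1, when you collapse $S^{2k-1}$ out of the gluing.

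The more serious gap is in (i). Step~1 does give $E_N$ as the homotopy pushout of $\widehat{E}\xleftarrow{\gamma}S^{n-1}\amsrtimes S^{2k-1}\xrightarrow{f_N\amsrtimes 1}\overline{N}\amsrtimes S^{2k-1}$. But to recognise this as $E\conn\mathcal{G}^{2k}_{t_\delta}(N)$ you must know the homotopy class of $\gamma\circ j\colon S^{n+2k-2}\to\widehat{E}$, and Step~2 (``rearranging the pushout cube'') simply assumes the answer.

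Note first that $\widehat{E}=g^{-1}(\overline{M})$ is not the punctured $E$: it splits as $S^{n-1}\vee\overline{E}$, so the extra sphere is $S^{n-1}$, not $S^{2k-1}$. Under this splitting, Ganea's fibration gives $\gamma\circ j\simeq i_1\circ\delta+i_2\circ f_E+\sum_i a_i[i_1,\overline{s}_i]$, where the $\overline{s}_i\colon S^{2k}\to\overline{E}$ represent a basis of $\pi_{2k}(\overline{E})$. The $\delta$-term arises as the $S^{n-1}$-component in $\pi_{n+2k-2}(S^{n-1})\cong\pi^S_{2k-1}$, not from clutching data of a null homotopy, and it is what gets absorbed into $t_\delta$. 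The Whitehead product terms must be shown to vanish.

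A homology or cup-product (Poincar\'e duality) check cannot do this. Whitehead products are zero in homology. After the $S^{n-1}$ is glued to $\overline{N}$ via $f_N$, these terms become $[f_N,\overline{s}_i]$, which cup products do not detect because $f_N$ has trivial Hurewicz image. Nor do you have a comparison map $E_N\to E\conn\mathcal{G}^{2k}_{t_\delta}(N)$ to test until the attaching maps are already matched up to homotopy.

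In the paper this is the content of Section~\ref{sec:gamma via H_2k}. For $r=1$ the terms vanish because $\overline{E}$ is $2k$-connected. For $r>1$ one needs three further ingredients:
\begin{itemize}
\item the rational principal structure of the fibration;
\item a specific choice of $\gamma$ with $\widehat{g}\circ\gamma\simeq f_M\perp[f_M,s_1]$;
\item detecting maps $\psi_\ell\colon\overline{M}\to S^{2k}\vee S^{n-2k}$, whose existence is exactly where $n>4k$ and $x_\ell^2=0$ enter.
\end{itemize}
Only then does Proposition~\ref{prop:varthetagamma} give the homotopy pushout with $\kappa\circ f_E\simeq\gamma\circ t_\delta\circ j$ that your Step~2 takes for granted.
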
 

In particular, the Main Theorem recovers the case when 
\(F\xrightarrow{\alpha} E\xrightarrow{g} M\) is one of the Hopf fibrations \(S^{1}\rightarrow S^{2n+1}\rightarrow\mathbb{C}P^{n}\) or \(S^{3}\rightarrow S^{4n+3}\rightarrow\mathbb{H}P^{n}\) 
and extends this to any other case when the rank of $H_{2k}(M;\mathbb{Z})$ is one. It recovers Duan's result on circle bundles, up to homotopy equivalence, and extends it from smooth manifolds to Poincar\'{e} Duality complexes, and from circle bundles to $S^{3}$-bundles. Further, after localising away from $2$ so that $S^{2k-1}$ is an $H$-space for $k\not\in\{1,2\}$, it identifies $E_{N}$ in cases that are inaccessible using geometric arguments.

This paper is organised as follows. Section \ref{sec:gy} discusses connected sums and gyrations in terms of pushouts, and reviews some properties.  Section \ref{sec:E_manipulations} defines and establishes properties of several spaces and maps related to the homotopy fibration diagram~(\ref{introEN}) that will be used to gain control over the homotopy type of $E_{N}$. One of these is a space \(\widehat{E}\) that has a splitting property described in Section~\ref{sec:E-hat_htpytype} and another is a map \(\gamma\), whose interaction with the splitting of \(\widehat{E}\) is a large part what makes the overall argument work. Properties of \(\gamma\) are discussed in Section~\ref{sec:gamma via H_2k};  these are delicate and lead to the addition of the extra hypotheses in the $r>1$ case in the above Assumptions. The Main Theorem is proved in Section \ref{sec:results} and applications are provided in Section \ref{sec:applications}. Along the way, it is necessary to develop a version of Mather's Cube Lemma for a juxtaposition of cubes: this is included as Appendix \ref{appendix} and may be of independent interest.

\subsubsection*{Acknowledgement} During preparation of this work, the first author was supported by EPSRC grant EP/W524621/1 and the Heilbronn Institute for Mathematical Research. 

\section{Connected Sums and Gyrations}
\label{sec:gy} 

This section reviews material on connected sums and gyrations that will be used later 
in the paper. 

\subsection{Connected Sums}\label{subsec:connsums}
\hfill
\vspace{5pt}

Let \(M\) be a simply-connected \(n\)-dimensional Poincar\'e Duality complex. Such an $M$ can be given the structure of a $CW$-complex with a single cell in dimension $n$; fix such a $CW$-structure. Let $\overline{M}$ be the $(n-1)$-skeleton  of $M$. Then there is a homotopy cofibration 
\[
    S^{n-1}\xlongrightarrow{f_{M}}\overline{M}\xlongrightarrow{i_{M}} M
\] 
where $f_{M}$ is the attaching map for the $n$-cell of $M$ and $i_{M}$ is the inclusion of 
the $(n-1)$-skeleton.  

If $N$ is another simply-connected $n$-dimensional Poincar\'{e} Duality complex then 
the connected sum $M\conn N$ is formed by deleting the interior of an $n$-disk 
from both $M$ and $N$ and gluing $M\backslash(D^{n})^{\circ}$ and 
$N\backslash(D^{n})^{\circ}$ together along the boundary $S^{n-1}$ of the deleted disks. 
Topologically, this can be reformulated as follows. Observe that 
$M\backslash(D^{n})^{\circ}\simeq\overline{M}$ and under this homotopy 
equivalence the map including the boundary $S^{n-1}$ of the deleted disk into 
$M\backslash(D^{n})^{\circ}$ becomes the attaching map $f_{M}$. The same is true for~$N$. 
The connected sum is then given by the homotopy cofibration 
\begin{equation} 
   \label{connsumcofib} 
   S^{n-1}\xrightarrow{f_{M}\check{+}f_{M}}\overline{M}\vee\overline{N}\rightarrow M\conn N 
\end{equation} 
where $f_{M}\check{+}f_{N}$ is the composite 
\[
    f_{M}\check{+}f_{N}: S^{n-1}\xrightarrow{\sigma} S^{n-1}\vee S^{n-1}\xrightarrow{f_{M}\vee f_{N}} 
    \overline{M}\vee\overline{N}
\] 
with $\sigma$ being the standard suspension comultiplication. This homotopy cofibration for $M\conn N$ can be equivalently described as a homotopy pushout, which we record for future reference.

\begin{lem} 
   \label{lem:connsumpo} 
   Let $M$ and $N$ be simply-connected $n$-dimensional Poincar\'{e} Duality complexes. 
   Then there is a homotopy pushout
   \[
   \begin{tikzcd}[row sep=3em, column sep=3em]
        S^{n-1} \arrow[r, "f_M"] \arrow[d, "f_N"] & \overline{M} \arrow[d] \\
        \overline{N} \arrow[r]  & M\#N
    \end{tikzcd}
    \]
\end{lem}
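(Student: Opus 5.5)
The plan is to deduce the homotopy pushout from the homotopy cofibration~(\ref{connsumcofib}) by a standard cofibre-of-wedge argument. Write $f_M\check{+}f_N = (f_M\vee f_N)\circ\sigma$, and note that $\sigma$ is, up to homotopy, the pinch map $S^{n-1}\to S^{n-1}\vee S^{n-1}$ obtained by collapsing an equator. Collapsing an equator of $S^{n-1}$ is the same as forming the pushout of the two hemisphere inclusions $D^{n-1}_{\pm}\hookleftarrow S^{n-2}\hookrightarrow D^{n-1}_{\mp}$ followed by contracting both disks to points. So $M\conn N$ is the homotopy cofibre of $S^{n-1}\to\overline{M}\vee\overline{N}$, and I would rewrite this cofibre as an iterated homotopy pushout.

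First, I would form the homotopy pushout $P$ of $\overline{M}\xleftarrow{f_M}S^{n-1}\xrightarrow{f_N}\overline{N}$. Since all spaces are connected and basepoints can be chosen compatibly, $P$ is the double mapping cylinder, i.e. the quotient of $\overline{M}\sqcup (S^{n-1}\times[0,1])\sqcup\overline{N}$. Second, I would compare $P$ with the cofibre of $(f_M\vee f_N)\circ\sigma$. Take the basepoint of $S^{n-1}$ to lie on the equator used for $\sigma$, and collapse the contractible arc $\{\ast\}\times[0,1]$ in the cylinder. This is a homotopy equivalence, because the arc is a contractible subcomplex. It identifies $P$ with $(\overline{M}\vee\overline{N})\cup_{h} (S^{n-1}\times[0,1]/\ast\times[0,1])$, the reduced cylinder attached along its two ends.

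Third, I would identify the reduced cylinder $S^{n-1}\rtimes I$ glued at both ends. The reduced cylinder on $S^{n-1}$, with its two ends as the "boundary", is homeomorphic to $D^{n}$ with boundary $S^{n-1}\vee S^{n-1}$ pinched. More precisely, $(S^{n-1}\times I)/(\ast\times I)$ is the cofibre of the inclusion of the two ends, which is $\Sigma S^{n-1}\vee$(contractible), and this gives the attaching map $S^{n-1}\to S^{n-1}\vee S^{n-1}$ equal to the comultiplication. Concretely, $P$ is obtained from $\overline{M}\vee\overline{N}$ by attaching one $n$-cell along $(f_M\vee f_N)\circ\sigma$, possibly with a sign on one summand. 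Orientation conventions can absorb that sign, since replacing $f_N$ by $-f_N$ just reverses the orientation of $N$.

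Finally, comparing with~(\ref{connsumcofib}) shows that $P\simeq M\conn N$, and the maps $\overline{M}\to P$ and $\overline{N}\to P$ are the canonical inclusions. The only step needing care is the third, matching the double mapping cylinder with the single cell attached via $\sigma$, including the orientation sign. I would handle it with an explicit cellular model: give $S^{n-1}\times I$ its product cell structure, so that the top cell $e^{n-1}\times e^1$ is attached by $\partial$ formula (top cell)$\times\{0,1\}$ minus $\ast\times I$.
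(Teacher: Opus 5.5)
Your proposal is correct and follows the paper's route: the paper gives no separate proof, only the remark that the homotopy cofibration~(\ref{connsumcofib}) can equivalently be described as a homotopy pushout, and your double-mapping-cylinder argument (collapse the arc $\ast\times I$, then recognise the reduced cylinder as $S^{n-1}\vee S^{n-1}$ with one $n$-cell attached) is the standard justification of that remark. One point needs sharpening: the sign is always present, not merely ``possibly'', because $D^{n-1}\times\{0\}$ and $D^{n-1}\times\{1\}$ receive opposite orientations from $\partial(D^{n-1}\times I)$, so the homotopy pushout of $f_M$ and $f_N$ is the cofibre of $f_M\check{+}(-f_N)$, which in the conventions of~(\ref{connsumcofib}) is $M\conn(-N)$, with $-N$ denoting $N$ oppositely oriented (doubling $\mathbb{C}P^{2}$ minus an open disc gives $\mathbb{C}P^{2}\conn(-\mathbb{C}P^{2})$, not $\mathbb{C}P^{2}\conn\mathbb{C}P^{2}$); the lemma therefore holds exactly when one of the two maps in the square is taken to be the attaching map for the oppositely oriented top cell, and the sign cannot be treated as harmless, since $M\conn N$ and $M\conn(-N)$ need not be homotopy equivalent.
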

\vspace{-0.8cm}~$\qqed$\bigskip  

Two useful observations follow from the homotopy cofibration~(\ref{connsumcofib}). 
One is that 
\(\overline{M\#N}\simeq\overline{M}\vee\overline{N}\)
and the other is that collapsing $\overline{N}$ to a point inside $M\conn N$ 
gives a homotopy cofibration 
\[\overline{N}\rightarrow M\#N\rightarrow M.\] 
Also note that if \(M\) and \(N\) have the homotopy type of oriented manifolds then this construction agrees up to homotopy with the usual orientation preserving connected sum.

\subsection{Gyrations}\label{subsec:gyrations}
\hfill
\vspace{5pt}

For \(k\geq2\) take a map \(\tau:S^{k-1}\rightarrow \mathrm{SO}(n)\) and use the standard linear action of \(\mathrm{SO}(n)\) on \(S^{n-1}\) to define the map 
\begin{equation}\label{linearactiont} 
t:S^{n-1}\times S^{k-1}\rightarrow S^{n-1}\times S^{k-1} 
\end{equation} 
by \(t(a, x)=(\tau(x)\cdot a,x)\).  

\begin{defn}\label{def:gy}
    Let \(k\geq2\) be an integer and let \(N\) be an \(n\)-dimensional Poincar\'e Duality complex. Define the \textit{\(k\)-gyration of \(N\) by \(\tau\)} to be the space defined by the (strict) pushout
        \begin{equation}\label{gyrationpo}
            \begin{tikzcd}[row sep=3em, column sep=3em]
                S^{n-1}\times S^{k-1} \arrow[r, "1\times \iota"] \arrow[d, "(f_N\times 1)\circ t"] & S^{n-1}\times D^k \arrow[d] \\
                \overline{N}\times S^{k-1} \arrow[r] & \mathcal{G}^k_\tau(N) 
            \end{tikzcd}
        \end{equation} 
    where $\iota$ is the inclusion of the boundary of the disc. When the context is clear, we will usually just write \textit{gyration} for \(\mathcal{G}^k_\tau(N)\). 
\end{defn} 

An important special case is when $\tau$ is the trivial map, in which case $t$ is the identity map and the gyration is written as $\mathcal{G}_{0}^{k}(N)$. We call this the trivial \(k\)-gyration. 

\begin{rem}
    The gyration \(\mathcal{G}^{k}_{\tau}(N)\) is a Poincar\'{e} Duality complex of dimension \(n+k-1\). Furthermore, via the alternative surgery definition (see for example \cite{huang_inertness24}*{Section 12}) it follows that if \(N\) is an oriented manifold then \(\mathcal{G}^{k}_{\tau}(N)\) is an $(n+k-1)$-manifold with an orientation inherited from that of \(N\). 
\end{rem} 

The definition of a gyration may be refined, from a homotopy theoretic perspective. To set up, 
for two path-connected, based spaces $X$ and $Y$, the \textit{(right) half-smash} of $X$ and $Y$ is the quotient space \(X\amsrtimes Y=X\times Y/\{x_0\}\times Y\), where $x_0$ is the basepoint of $X$. There is a homotopy cofibration 
\[
    Y\xrightarrow{i_2}X\times Y\rightarrow X\amsrtimes Y
\] 
where \(i_2\) is the inclusion of the second factor. It is well known that if $X$ is a co-$H$-space, then there is a homotopy equivalence $X\amsrtimes Y\simeq X\vee(X\wedge Y)$.
In our case, consider the homotopy cofibration 
\[
    S^{k-1}\xrightarrow{i_{2}} S^{n-1}\times S^{k-1}\xrightarrow{p} S^{n-1}\amsrtimes S^{k-1}
\] 
where $i_{2}$ is the inclusion of the second factor and $p$ is the quotient map. Observe that the definition of~$t$ in~(\ref{linearactiont}) implies that $t\circ i_{2}=i_{2}$. Therefore there is a homotopy cofibration diagram 
\begin{equation} 
  \label{t'def} 
    \begin{tikzcd}[row sep=3em, column sep = 3em]
        S^{k-1} \arrow[d, equal] \arrow[r, "i_{2}"] & S^{n-1}\times S^{k-1} \arrow[d, "t"] \arrow[r, "p"] &  S^{n-1}\amsrtimes S^{k-1} \arrow[d, "t'"] \\ 
        S^{k-1} \arrow[r, "i_{2}"] & S^{n-1}\times S^{k-1} \arrow[r, "p"] & S^{n-1}\amsrtimes S^{k-1}
    \end{tikzcd}
\end{equation} 
that defines the map $t'$ as an induced map of homotopy cofibres. Since $t$ is a homotopy 
equivalence, it induces an isomorphism in homology, and therefore the five-lemma applied 
to the long exact sequences in homology induced by the homotopy cofibration diagram~(\ref{t'def}) 
imply that $t'$ also induces an isomorphism in homology. Since spaces are simply-connected, 
$t'$ is therefore a homotopy equivalence by Whitehead's Theorem. 

Returning to the definition of a gyration in~(\ref{def:gy}), as $D^{k}$ is contractible the restrictions of both horizontal arrows to $S^{k-1}$ are null homotopic, so this space may be collapsed out to give a homotopy pushout 
\begin{equation}\label{gyrationpo2temp}
    \begin{tikzcd}[row sep=3em, column sep=3em]
        S^{n-1}\amsrtimes S^{k-1} \arrow[r, "\pi"] \arrow[d, "(f_N\amsrtimes 1)\circ t'"] & S^{n-1} \arrow[d] \\
        \overline{N}\amsrtimes S^{k-1} \arrow[r] & \mathcal{G}^{k}_\tau(N)  
    \end{tikzcd}
\end{equation} 
where $t$ has been replaced by the map $t'$ in~(\ref{t'def}). 
This homotopy pushout could equally be regarded as defining $\mathcal{G}^{k}_{\tau}(N)$ up to homotopy. From that point of view, the definition of a gyration could be extended, replacing the self-equivalence $t'$ of $S^{n-1}\amsrtimes S^{k-1}$ that arises from $\tau$ with any self-equivalence that results in the homotopy pushout satisfying Poincar\'{e} Duality.  

To that end, observe that there is a homotopy cofibration 
\[
    S^{n-1}\xrightarrow{i} S^{n-1}\amsrtimes S^{k-1}\xrightarrow{q} S^{n+k-2}
\] 
where $i$ is the inclusion of the first wedge summand and $q$ is the quotient map. Assuming $n>k$, an element in $\pi^{S}_{k-1}$ can be represented by a map \(\delta: S^{n+k-2}\rightarrow S^{n-1}\). Let $\epsilon_{\delta}$ be the composite 
\begin{equation}\label{def:epsilondelta} 
   \epsilon_{\delta}: S^{n-1}\amsrtimes S^{k-1}\xrightarrow{q} S^{n+k-2}\xrightarrow{\delta} S^{n-1}\xrightarrow{i} S^{n-1}\amsrtimes S^{k-1}. 
 \end{equation}  
Since $S^{n-1}\amsrtimes S^{k-1}\simeq S^{n-1}\vee S^{n+k-2}$, it is a co-$H$-space. Using the comultiplication, define the map 
\begin{equation}\label{def:tdelta} 
   t_{\delta}: S^{n-1}\amsrtimes S^{k-1}\rightarrow S^{n-1}\amsrtimes S^{k-1} 
\end{equation}  
by $t_{\delta}=1-\epsilon_{\delta}$. 

\begin{lem}\label{tdeltaequiv} 
   For any $\delta\in\pi^{S}_{k-1}$ and $n>k\geq 2$, the map $t_{\delta}$ is a homotopy equivalence with the 
   property that it induces the identity map in homology and cohomology.  
\end{lem}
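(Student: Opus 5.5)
The plan is to show first that $\epsilon_{\delta}$ induces the zero map in homology and cohomology, and then to deduce both the homology statement and the homotopy equivalence for $t_{\delta}$ from that.

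\textbf{Step 1: $\epsilon_{\delta}$ is zero in (co)homology.} By~(\ref{def:epsilondelta}), $\epsilon_{\delta}$ factors through $\delta:S^{n+k-2}\to S^{n-1}$. Since $n>k\geq 2$, we have $k-1\geq 1$, so $n+k-2>n-1$. A map between spheres of different dimensions induces zero in reduced homology and cohomology, because one of the two groups in each degree vanishes. Hence $(\epsilon_{\delta})_{*}=0$ and $\epsilon_{\delta}^{*}=0$ on reduced (co)homology, with any coefficients.

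\textbf{Step 2: $t_{\delta}$ is the identity in (co)homology.} Write $X=S^{n-1}\amsrtimes S^{k-1}\simeq S^{n-1}\vee S^{n+k-2}$, with comultiplication $\sigma:X\to X\vee X$. Then $t_{\delta}$ is the composite $\nabla\circ(1\vee(-\epsilon_{\delta}))\circ\sigma$, where $-\epsilon_{\delta}=\epsilon_{\delta}\circ\nu$ and $\nu$ is the co-inverse. For a co-$H$-space the comultiplication satisfies $\sigma_{*}(x)=(x,x)$ in reduced homology $\tilde H_{*}(X\vee X)\cong\tilde H_{*}(X)\oplus\tilde H_{*}(X)$. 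This follows because the composites with the two pinch maps are homotopic to the identity. Consequently $(f+g)_{*}=f_{*}+g_{*}$ for maps $f,g:X\to Y$. In particular $(-\epsilon_{\delta})_{*}=-(\epsilon_{\delta})_{*}=0$, and so $(t_{\delta})_{*}=1_{*}-(\epsilon_{\delta})_{*}=\mathrm{id}$. The dual argument gives $t_{\delta}^{*}=\mathrm{id}$ in cohomology, or one can deduce it from the universal coefficient theorem since all groups involved are free.

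\textbf{Step 3: homotopy equivalence.} The space $X$ is simply-connected, since $n-1\geq 2$ and $n+k-2\geq 2$. Because $t_{\delta}$ induces an isomorphism on integral homology, Whitehead's Theorem gives that $t_{\delta}$ is a homotopy equivalence. This mirrors the argument used for $t'$ after~(\ref{t'def}).

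\textbf{Main obstacle.} The only delicate point is the additivity of induced maps under the co-$H$-sum in Step 2. I would verify it directly through the wedge decomposition of $H_{*}(X\vee X)$ and the counit property of $\sigma$. One could also avoid it by writing $t_{\delta}$ explicitly on $S^{n-1}\vee S^{n+k-2}$: it is the identity on $S^{n-1}$ and $\iota_{n+k-2}-\delta$ on the top sphere. In that form it is visibly the identity on homology, and it has an explicit homotopy inverse $1+\epsilon_{\delta}$, using that $\epsilon_{\delta}\circ\epsilon_{\delta}$ is null since it factors through $q\circ i\simeq *$.
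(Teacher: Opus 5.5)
Your proof is correct and follows the paper's argument. Since $k\geq 2$, $\delta$ is a map between spheres of different dimensions, so $\epsilon_{\delta}$ is zero in homology and cohomology. Hence $t_{\delta}=1-\epsilon_{\delta}$ induces the identity, and Whitehead's Theorem on the simply-connected half-smash finishes. Your explicit check that induced maps are additive on co-$H$-sums fills in a step the paper takes for granted.
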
 

\begin{proof} 
Observe that as $k\geq 2$, we have 
$n+k-2>n-1$, implying that $\delta$ induces the zero map in homology and cohomology. 
Thus $(\epsilon_{\delta})_{\ast}=0$ and $(\epsilon_{\delta})^{\ast}=0$, implying that  
$(t_{\delta})_{\ast}=1_{\ast}$ and $(t_{\delta})^{\ast}=1^{\ast}$. In particular, as $t_{\delta}$ induces an isomorphism in homology and \(S^{n-1}\amsrtimes S^{k-1}\) is simply-connected for \(n>k\geq2\), it is a homotopy equivalence by Whitehead's Theorem. 
\end{proof}

Define the gyration $\mathcal{G}^{k}_{t_{\delta}}(N)$ by the homotopy pushout 
\begin{equation}\label{gyrationpotdelta}
    \begin{tikzcd}[row sep=3em, column sep=3em]
        S^{n-1}\amsrtimes S^{k-1} \arrow[r, "\pi"] \arrow[d, "(f_N\amsrtimes 1)\circ t_{\delta}"] & S^{n-1} \arrow[d] \\
        \overline{N}\amsrtimes S^{k-1} \arrow[r] & \mathcal{G}^{k}_{t_{\delta}}(N).  
    \end{tikzcd}
\end{equation}
Note that as $t_{\delta}$ induces the identity map in cohomology, there is a isomorphism 
$H^{\ast}(\mathcal{G}^{k}_{t_{\delta}}(N))\cong H^{\ast}(\mathcal{G}^{k}_{0}(N))$ (as rings), implying that $\mathcal{G}^{k}_{t_{\delta}}(N)$ is a Poincar\'{e} Duality complex. 

Basu-Ghosh~\cite{basu-ghosh}*{Proposition 6.9} identified the homotopy type of of the \((n+k-2)\)-skeleton of \(\mathcal{G}^k_\tau(N)\) using a geometric approach. A different formulation was given in~\cite{ChenTher:gy_stab}*{Lemma 3.2} that also identified the homotopy class of the attaching map for the top cell of $\mathcal{G}^{k}_{\tau}(N)$. The latter argument also applies to $\mathcal{G}^{k}_{t_{\delta}}(N)$, as will now be discussed. Let \(\pi:S^{n-1}\amsrtimes S^{k-1}\rightarrow S^{n-1}\) be the natural projection map. The homotopy fibre of $\pi$ is homotopy equivalent to \mbox{$\Sigma\Omega S^{n-1}\wedge S^{k-1}$} (see~\cite{selick}*{Theorem 7.7.3}), implying that its $(n+k-2)$-skeleton is homotopy equivalent to $S^{n+k-2}$. Let $j$ be the composite 
\[
    j: S^{n+k-2}\xhookrightarrow{} \Sigma\Omega S^{n-1}\wedge S^{k-1}\rightarrow S^{n-1}\amsrtimes S^{k-1}.
\] 
In general, given maps \(f: A\rightarrow C\) and \(g: B\rightarrow C\), let \(f\perp g: A\vee B\rightarrow C\) be their \textit{wedge sum}. The following lemma was proved in~\cite{ChenTher:gy_stab}*{Lemma 2.2}. 

\begin{lem}\label{lem:spherehalfsmash}
    Let \(q,i,j\) and \(\pi\) be defined as above. If $k\leq n-1$ then
    \begin{enumerate}
        \item[(i)] there is a homotopy cofibration \(S^{n+k-2}\xrightarrow{j} S^{n-1}\amsrtimes S^{k-1}\xrightarrow{\pi} S^{n-1}\);
        \item[(ii)] the map \(q\) is a left homotopy inverse for \(j\); 
        \item[(iii)] the map \(j\) is a co-$H$-map;
        \item[(iv)] there is a homotopy equivalence \({i\perp j}: S^{n-1}\vee S^{n+k-2}\rightarrow S^{n-1}\amsrtimes S^{k-1}\).
    \end{enumerate}  
\end{lem}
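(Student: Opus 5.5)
Throughout I would work with the standard model $S^{n-1}\amsrtimes S^{k-1}=(S^{n-1}\times S^{k-1})/(\ast\times S^{k-1})$. The aim is to identify the homotopy fibre of $\pi$ in a range and then compare it with the cofibre of $j$. Write $\Sigma^{n-2}$ for $S^{n-1}=\Sigma S^{n-2}$.

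For (i), recall that $\pi$ has a section $i$. By the fibre identification quoted from Selick, the homotopy fibre is $\Sigma\Omega S^{n-1}\wedge S^{k-1}$. Using the James splitting $\Sigma\Omega S^{n-1}\simeq\bigvee_{m\ge1}S^{m(n-2)+1}$, this fibre is $S^{n+k-2}\vee S^{2n+k-4}\vee\cdots$. The $(n+k-2)$-skeleton is therefore $S^{n+k-2}$, provided $2n+k-4>n+k-2$, that is $n>2$; this holds since $n-1\ge k\ge 2$.

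The composite $\pi\circ j$ is null, because $j$ factors through the fibre. Hence $\pi$ extends over the cofibre $C_j$, giving a map $C_j\to S^{n-1}$. I would show this map is a homology isomorphism. The key input is that $j$ induces an isomorphism in $H_{n+k-2}$. To see this, apply the Serre spectral sequence, or simply note that the fibre inclusion is $(2n+k-4)$-connected relative to the fibration in the relevant degrees. More concretely, $H_{n+k-2}(S^{n-1}\amsrtimes S^{k-1})\cong\mathbb{Z}$ is the kernel of $\pi_\ast$. Since $n+k-2<2n-3$, the fibration $\mathrm{fib}\to S^{n-1}\amsrtimes S^{k-1}\to S^{n-1}$ is a cofibration in this range (Blakers--Massey/Ganea), so the fibre maps isomorphically onto the kernel in degree $n+k-2$.

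Given this, $C_j$ has homology $\mathbb{Z}$ in degrees $0$ and $n-1$, and the map $C_j\to S^{n-1}$ is an isomorphism there. It is therefore a homology isomorphism between simply-connected spaces, and Whitehead's Theorem gives (i). The homology claim is the main obstacle; I expect it to need this connectivity estimate, using $k\le n-1$.

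For (ii), I would compute $q\circ j$ in degree $n+k-2$. The map $q$ collapses $S^{n-1}$ and is an isomorphism on $H_{n+k-2}$, so $q\circ j$ has degree $\pm1$. Reorienting $j$ if necessary, $q\circ j\simeq 1$.

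For (iv), the map $i\perp j$ is an isomorphism on homology: $i$ hits degree $n-1$ and $j$ hits degree $n+k-2$. Since both spaces are simply connected, Whitehead's Theorem gives the equivalence.

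For (iii), $j$ maps between co-$H$-spaces, and I would check that its co-$H$-deviation vanishes. The deviation is a map $S^{n+k-2}\to (S^{n-1}\amsrtimes S^{k-1})\times(S^{n-1}\amsrtimes S^{k-1})$ modulo the wedge. Its target is the smash, which is $(2n-3)$-connected. Since $n+k-2\le 2n-3$, the deviation is null. Alternatively, $j$ is a suspension in this range by Freudenthal, as $S^{n+k-2}$ is a suspension mapping into a wedge of spheres within the stable range.
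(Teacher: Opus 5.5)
Your arguments for (ii) and (iv) are fine. Part (i) is also fine in substance, with one misstated range. The Serre spectral sequence gives that $j_\ast$ is an isomorphism on $H_{n+k-2}$ for every $k\ge 2$: in total degree below $2n+k-3$ only the two axes contribute. The inequality $n+k-2<2n-3$ you quote for this is the wrong range and fails when $k=n-1$, but it is not needed. The paper itself gives no proof of this lemma (it quotes it from earlier work), so your argument has to stand on its own. One step does not.

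The gap is in (iii), where the connectivity count is off by one. Let $Y=S^{n-1}\amsrtimes S^{k-1}$. Two maps $S^{m}\to Y\vee Y$ that agree in $Y\times Y$ differ by an element of $\pi_{m}$ of the homotopy fibre $\Sigma\Omega Y\wedge\Omega Y$ of $Y\vee Y\to Y\times Y$. The relevant object is this fibre, not the smash $Y\wedge Y$. Since $Y$ is $(n-2)$-connected, the fibre has bottom cell $S^{2n-3}$, namely the Whitehead product of the two bottom cells of $Y\vee Y$. So your argument only kills the deviation when $n+k-2\le 2n-4$, i.e.\ $k\le n-2$.

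At $k=n-1$, which the statement allows, the obstruction group is $\mathbb{Z}$, and the claim that every map $S^{2n-3}\to Y$ is co-$H$ is false. For $n$ odd, $i\circ[\iota_{n-1},\iota_{n-1}]$ has deviation $\pm 2\,[i_{1}\circ i,i_{2}\circ i]\neq 0$. Your Freudenthal alternative fails at the same point: $n+k-3=2n-4$ lies outside the stable range for the $(n-3)$-connected space $S^{n-2}\amsrtimes S^{k-1}$. So at $k=n-1$ you must use properties of $j$ itself.

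One way to do this is as follows.
\begin{itemize}
\item Write $S^{n-1}\amsrtimes S^{k-1}=S^{n-1}\wedge S^{k-1}_{+}=\Sigma(S^{n-2}\amsrtimes S^{k-1})$. Under this identification, $\pi$ and $q$ are the suspensions of the analogous maps $\pi'$ and $q'$.
\item Choose $j''\colon S^{n+k-3}\to S^{n-2}\amsrtimes S^{k-1}$ with $q'\circ j''\simeq 1$ and $\pi'\circ j''\simeq\ast$.
\item Then $\Sigma j''$ and $j$ have the same images under $\pi$ and $q$.
\item The pair $(\pi_\ast,q_\ast)$ is injective on $\pi_{n+k-2}(Y)$. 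This follows from (iv) together with Ganea's fibration for $S^{n-1}\vee S^{n+k-2}\to S^{n-1}\times S^{n+k-2}$, whose fibre is $(2n+k-5)$-connected.
\item Hence $j\simeq\Sigma j''$ is a suspension, so it is a co-$H$-map for the suspension comultiplication.
\end{itemize}
Equivalently, you can transport the comultiplication along $i\perp j$, and then (iii) is immediate from (iv).

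In the paper the lemma is only used with $2k$ in place of $k$ and $n\ge 4k$, well inside $k\le n-2$. So your connectivity argument does cover every case actually needed, but not the lemma as stated.
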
 
\vspace{-0.8cm}~$\qqed$\bigskip 

Now consider the diagram 
\[\begin{tikzcd}[row sep=3em, column sep=3em] 
        S^{n+k-2}\arrow[r]\arrow[d, "j"] & \ast\arrow[d] \\ 
        S^{n-1}\amsrtimes S^{k-1} \arrow[r, "\pi"] \arrow[d, "(f_N\amsrtimes 1)\circ t_{\delta}"] & S^{n-1} \arrow[d] \\
        \overline{N}\amsrtimes S^{k-1} \arrow[r] & \mathcal{G}^{k}_{t_{\delta}}(N).  
    \end{tikzcd} 
\] 
The upper square is a homotopy pushout by Lemma~\ref{lem:spherehalfsmash}~(i) and the lower square is a homotopy pushout by~(\ref{gyrationpotdelta}). Thus the outer rectangle is also a homotopy pushout. This implies that there is a homotopy cofibration 
\[ 
   S^{n+k-2} \xlongrightarrow{\varphi_{t_{\delta}}} \overline{N}\amsrtimes S^{k-1} \longrightarrow \mathcal{G}^{k}_{t_{\delta}}(N)
\] 
where
\[
    \varphi_{t_{\delta}}: S^{n+k-2}\xrightarrow{j} 
    S^{n-1}\amsrtimes S^{k-1}\xrightarrow{t_{\delta}} S^{n-1}\amsrtimes S^{k-1}\xrightarrow{f_{N}\amsrtimes 1} \overline{N}\amsrtimes S^{k-1} 
\] 
attaches the top cell to $\mathcal{G}^{k}_{t_{\delta}}(N)$. This proves the following.  
   
\begin{lem} 
   \label{lem:gyrationskel} 
   The \((n+k-2)\)-skeleton of \(\mathcal{G}^k_{t_{\delta}}(N)\) is homotopy equivalent to the half-smash \(\overline{N}\amsrtimes S^{k-1}\) and the map $\varphi_{t_{\delta}}:S^{n+k-2}\rightarrow\overline{N}\amsrtimes S^{k-1}$ is the attaching map for the top cell.~$\qqed$ 
\end{lem}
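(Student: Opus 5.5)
The argument is essentially the pasting of homotopy pushouts already set up above the statement; I would organise it in three steps.

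\emph{Step 1 (the cofibration).} Stack two squares. The lower square is the defining homotopy pushout~(\ref{gyrationpotdelta}) of $\mathcal{G}^{k}_{t_{\delta}}(N)$. The upper square is
\[
S^{n+k-2}\xrightarrow{j} S^{n-1}\amsrtimes S^{k-1},\qquad S^{n+k-2}\to\ast\to S^{n-1},
\]
which is a homotopy pushout by Lemma~\ref{lem:spherehalfsmash}(i); this needs $k\leq n-1$, i.e.\ the standing assumption $n>k$. By the pasting lemma, the outer rectangle is then a homotopy pushout. Since its top right corner is a point, this says precisely that there is a homotopy cofibration
\[
S^{n+k-2}\xrightarrow{\varphi_{t_{\delta}}}\overline{N}\amsrtimes S^{k-1}\longrightarrow\mathcal{G}^{k}_{t_{\delta}}(N),
\qquad
\varphi_{t_{\delta}}=(f_N\amsrtimes 1)\circ t_{\delta}\circ j .
\]

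\emph{Step 2 (the skeleton).} Choose a $CW$-structure on $\overline{N}$ with cells of dimension at most $n-1$, and on $S^{k-1}$ with one $0$-cell and one $(k-1)$-cell. Then $\overline{N}\amsrtimes S^{k-1}$ is a quotient of a product $CW$-complex, so it has cells only in dimensions at most $(n-1)+(k-1)=n+k-2$. Replace $\varphi_{t_{\delta}}$ by a cellular representative and form its mapping cone. This gives $\mathcal{G}^{k}_{t_{\delta}}(N)$, up to homotopy, as $\overline{N}\amsrtimes S^{k-1}$ with exactly one $(n+k-1)$-cell attached by $\varphi_{t_{\delta}}$. Hence the $(n+k-2)$-skeleton is $\overline{N}\amsrtimes S^{k-1}$, and $\varphi_{t_{\delta}}$ is the attaching map of the top cell.

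\emph{Step 3 (checks and main obstacle).} The step needing the most care is making sure the pasting is legitimate as homotopy pushouts rather than strict ones. This is handled by replacing $\pi$ with a cofibration; since the pasting lemma holds in the homotopy category of pushouts, this is routine. As a sanity check, the dimension count agrees with $\mathcal{G}^{k}_{t_{\delta}}(N)$ being a Poincaré Duality complex of dimension $n+k-1$ with a single top cell. I do not foresee any further difficulty.
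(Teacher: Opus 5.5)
Your proposal is correct and is the paper's own argument: stack the homotopy pushout of Lemma~\ref{lem:spherehalfsmash}(i) on top of the defining pushout~(\ref{gyrationpotdelta}), then paste to obtain the homotopy cofibration $S^{n+k-2}\xrightarrow{\varphi_{t_{\delta}}}\overline{N}\amsrtimes S^{k-1}\to\mathcal{G}^{k}_{t_{\delta}}(N)$ with $\varphi_{t_{\delta}}=(f_N\amsrtimes 1)\circ t_{\delta}\circ j$. Your Step~2 cell count only makes explicit what the paper leaves implicit when it reads off the skeleton and the top-cell attaching map from this cofibration.
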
 

In particular, if $t$ is the identity map, implying that $t_{\delta}$ is the identity map, then the attaching map for the top cell of $\mathcal{G}_{0}^{k}(N)$ is the composite 
\begin{equation}\label{eqn:G0attach} 
\varphi_{0}: S^{n+k-2}\xrightarrow{j} 
    S^{n-1}\amsrtimes S^{k-1}\xrightarrow{f_{N}\amsrtimes 1} \overline{N}\amsrtimes S^{k-1}. 
\end{equation}

\begin{rem}
Note that the homotopy equivalence for the $(n+k-2)$-skeleton of $\mathcal{G}^{k}_{t_{\delta}}(N)$ in Lemma~\ref{lem:gyrationskel} does not depend on the choice of \(t_{\delta}\), but that the homotopy class of the attaching map $\varphi_{t_{\delta}}$ does.  
\end{rem}

\section{Properties of a Family of Homotopy Pullbacks}
\label{sec:E_manipulations} 

In this section, we establish a framework of spaces and maps designed to gain control over the primary space of interest, the pullback $E_N$ discussed in the Introduction. Let $k\geq 1$ be an integer. Suppose that there is a homotopy fibration 
\begin{equation}\label{eq:htpyfib}\tag{\(\mathfrak{F}\)}
    S^{2k-1} \xlongrightarrow{\alpha} E \xlongrightarrow{g} M
\end{equation}
in which \(M\) is a \((2k-1)\)-connected $n$-dimensional Poincar\'{e} Duality complex and $\alpha$ is null homotopic. Note that $\alpha$ being null homotopic implies that the homotopy fibration connecting map \(\Omega M\rightarrow S^{2k-1}\) has a right homotopy inverse. Consequently, $S^{2k-1}$ is an $H$-space. Integrally this holds only if $k\in\{1,2,4\}$ and locally, after inverting 2, this holds for any positive $k$. Note also this implies that the homotopy fibre in (\ref{eq:htpyfib}) cannot be a sphere of even dimension; that is, the odd dimension is forced. 
 
Since $E$ is the total space of a homotopy fibration where both the fibre and the base are Poincar\'{e} Duality complexes of dimensions \(2k-1\) and \(n\) respectively, a theorem of Quinn~\cite{quinn} implies that $E$ is a Poincar\'{e} Duality complex of dimension $n+2k-1$. 

Let $N$ be another simply-connected Poincar\'{e} Duality complex of dimension $n$ and form the connected sum $M\conn N$. Recall from Section \ref{sec:gy} that the $(n-1)$-skeleton of $M\conn N$ is $\overline{M}\vee\overline{N}$ and that by collapsing out~$\overline{N}$ we obtain a canonical map \(M\# N \rightarrow M\). Moreover, observe that the composite \(\overline{M}\rightarrow M\# N\rightarrow M\) is identical to the inclusion of the $(n-1)$-skeleton of \(M\). Pulling back (\ref{eq:htpyfib}) with respect to this composite, define spaces $\widehat{E}$ and $E_{N}$ and maps $\widehat{\alpha}$ and $\alpha_{N}$ by the iterated homotopy pullback diagram
\begin{equation}\label{dgm:null}
    \begin{tikzcd}[row sep=3em, column sep = 3em]
        S^{2k-1} \arrow[r, "\widehat{\alpha}"] \arrow[d, equal] & \widehat{E} \arrow[r, "\widehat{g}"] \arrow[d] & \overline{M} \arrow[d]  \\
        S^{2k-1} \arrow[d, equal] \arrow[r, "\alpha_N"] & E_N \arrow[r, "g_N"] \arrow[d] & M\#N \arrow[d] \\
        S^{2k-1} \arrow[r, "\alpha"] & E \arrow[r, "g"] & M.
    \end{tikzcd}
\end{equation} 

The goal, achieved in Theorem~\ref{thm:main}, is to identify the homotopy type of $E_{N}$. We begin by establishing some initial properties of the spaces and maps in (\ref{dgm:null}). Define $\overline{E}$ as the $(n+2k-2)$-skeleton of $E$.

\begin{prop} 
   \label{prop:skeltoskel} 
    There is a homotopy commutative diagram 
    \begin{equation*}
        \begin{tikzcd}[row sep=3em, column sep=3em]
            \overline{E} \arrow[r, "\overline{g}"] \arrow[d] & \overline{M} \arrow[d] \\
            E \arrow[r, "g"] & M
        \end{tikzcd}
    \end{equation*}
    for some map $\overline{g}$, where the vertical maps are the skeletal inclusions. 
\end{prop}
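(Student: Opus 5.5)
The plan is to use cellular approximation together with the identification of $\overline{M}$ as the $(n-1)$-skeleton of $M$. First note that the claim is really about compressing $g\circ i_E$, where $i_E:\overline{E}\to E$ is the skeletal inclusion, into $\overline{M}$. By hypothesis $\overline{E}$ is the $(n+2k-2)$-skeleton of $E$. That dimension exceeds $n-1$ when $k\geq 1$, so cellular approximation by itself only lands $g\circ i_E$ in the $(n+2k-2)$-skeleton of $M$, which is all of $M$. A cruder argument therefore fails, and I will instead use the fibration structure.

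The approach is to take the homotopy pullback $\widehat{E}\to\overline{M}$ from~(\ref{dgm:null}) and show that $i_E$ factors through $\widehat{E}\to E$ up to homotopy. Then $\overline{g}$ is defined as the composite $\overline{E}\to\widehat{E}\xrightarrow{\widehat{g}}\overline{M}$, and the square commutes because the pullback square does.

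\textbf{Step 1.} Compare $\widehat{E}$ with $E$. The map $\overline{M}\to M$ has homotopy cofibre $S^{n}$, and it is $(n-1)$-connected. Consider the homotopy fibre of $\overline{M}\to M$. By the Blakers--Massey theorem, or simply by the relative Hurewicz theorem applied to the pair $(M,\overline{M})$, this fibre is $(n-2)$-connected, with bottom cell in dimension $n-1$. Because $\widehat{E}\to E$ is a pullback of $\overline{M}\to M$, it has the same homotopy fibre. A sharper description is better here. Ganea-type/pullback arguments give that the homotopy cofibre of $\widehat{E}\to E$ is the Thom-type space $S^{n}\rtimes S^{2k-1}$, obtained from the restriction of the fibration over the top cell of $M$. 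Since $\alpha$ is null, the fibration is trivial over $D^n$ up to fibre homotopy, so the cofibre is $S^{n}\rtimes S^{2k-1}\simeq S^{n}\vee S^{n+2k-1}$. In other words, $E$ is obtained from $\widehat{E}$ by attaching cells of dimension $n$ and $n+2k-1$ only.

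\textbf{Step 2.} Use this relative cell structure. Choose a CW structure on $E$ extending one on $\widehat{E}$, with relative cells only in dimensions $n$ and $n+2k-1$. If $\widehat{E}$ has dimension at most $n+2k-2$, then the $(n+2k-2)$-skeleton of $E$ is $\widehat{E}$ with $n$-cells attached. A cleaner route is to note that the pair $(E,\widehat{E})$ is $(n-1)$-connected, and to apply cellular approximation to the pair instead. The map $\overline{E}\to E$ can be deformed relative to lower cells into $\widehat{E}\cup(\text{$n$-cells})$. Alternatively, since $\overline{E}$ is $(n+2k-2)$-dimensional and the obstruction to compressing into $\widehat{E}$ lies in the groups $H^{j+1}(\overline{E};\pi_{j}(\text{fibre}))$, the relevant obstructions come from the relative cells.

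\textbf{Step 3 (main obstacle).} Eliminate the $n$-cell obstruction. The $n$-dimensional relative cell corresponds to the top cell of $M$, and its effect must be killed on $\overline{E}$. I would do this by using the Poincar\'{e} Duality structure of $E$: it has a single top cell of dimension $n+2k-1$, which maps under $g$ to the fundamental class times the fibre class. I would also use the fact, coming from the null homotopy of $\alpha$, that $E\simeq$ the pushout built from $\widehat{E}$ and $S^{n}\rtimes S^{2k-1}$. Concretely, I expect to show that the $n$-cell and the $(n+2k-1)$-cell can be arranged so that $E\simeq\widehat{E}'\cup e^{n+2k-1}$ with $\overline{E}\simeq\widehat{E}'$, where $\widehat{E}'$ is $\widehat{E}$ with the $n$-cell attached. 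Then $g$ restricted to $\widehat{E}'$ is required to land in $\overline{M}$. This last point is where care is needed: the $n$-cell maps onto the top cell of $M$. I would try instead to show directly that $g\circ i_E$ has trivial image in $H^n$, by checking that the fundamental class of $M$ pulls back into the top-dimensional part of $E$ via the Serre spectral sequence. That yields the compression through $\overline{M}$ by obstruction theory, since $\overline{M}\to M$ has cofibre $S^n$ and the sole obstruction is $(g\circ i_E)^{*}[M]^{*}\in H^n(\overline{E})$.
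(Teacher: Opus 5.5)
There is a genuine gap. Your closing step asserts that the only obstruction to compressing $g\circ i_E$ into $\overline{M}$ is $(g\circ i_E)^{*}(w)\in H^{n}(\overline{E})$, where $w$ generates $H^{n}(M)$. That holds for a source of dimension at most $n$, so your plan could be completed when $k=1$. For $k\geq 2$, however, you have taken $\overline{E}$ to be genuinely $(n+2k-2)$-dimensional. There are then further obstructions in $H^{j+1}(\overline{E};\pi_{j}(F))$ for $n\leq j\leq n+2k-3$, where $F$ is the $(n-2)$-connected homotopy fibre of $\overline{M}\to M$. Nothing in your argument controls these, and they depend on the choices made at earlier stages, so killing the primary obstruction alone proves nothing. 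Already for $M=S^{n}$ and $\overline{M}=\ast$, an essential map $S^{n+1}\to S^{n}$ pulls $w$ back to zero but does not compress.

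Steps 1--2 do not help either. Since $\widehat{E}$ is the homotopy pullback of $g$ along $\overline{M}\to M$, lifting $i_E$ to $\widehat{E}$ is equivalent to compressing $g\circ i_E$ into $\overline{M}$. The detour therefore only restates the problem; in the paper the lift $\kappa$ of Lemma~\ref{lem:Ehatproperty1} is in fact produced from $\overline{g}$. Step 3 then stalls exactly where you say it does.

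The missing idea is that the cellular approximation argument you dismiss at the outset works, because $\overline{E}$ is much smaller than its skeleton index suggests. Run the cohomology Serre spectral sequence of $S^{2k-1}\to E\to M$. The null homotopy of $\alpha$ forces $d^{2k}(x)=y$ with $y$ generating a $\mathbb{Z}$-summand of $H^{2k}(M)$. Poincar\'e Duality gives $z\in H^{n-2k}(M)$ with $y\cup z=w$, so $d^{2k}(x\otimes z)=w$ and $w$ dies. Since $H^{m}(M)=0$ for $n-2k<m<n$, the only other class of total degree at least $n$ on the $E_{2}$-page is $x\otimes w$, in degree $n+2k-1$. Hence $H^{m}(E)=0$ for $n\leq m\leq n+2k-2$. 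With the Universal Coefficient Theorem, this means a minimal cell structure on the simply-connected complex $E$ has no cells in dimensions $n,\dots,n+2k-2$, so $\overline{E}$ has dimension at most $n-1$. Cellular approximation then deforms $g\circ i_E$ into the $(n-1)$-skeleton $\overline{M}$ of $M$, with no obstruction theory and no $\widehat{E}$ needed.

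This also dissolves your Step 3 worry. The $n$-cell attached to $\widehat{E}$ cannot create $H_{n}$, so it cancels an $(n-1)$-dimensional class. The compression therefore exists, though in general not relative to $\widehat{E}$.
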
 

\begin{proof} 
Consider the integral cohomology Serre spectral sequence for the homotopy fibration 
\[
    S^{2k-1} \xlongrightarrow{\alpha} E \xlongrightarrow{g} M
\]
that converges to $H^{\ast}(E)$. The $E_{2}$-page is $H^{\ast}(S^{2k-1})\otimes H^{\ast}(M)$. Let $x\in H^{2k-1}(S^{2k-1})$ be a generator. Then the $E_{2}$-page is concentrated in two rows and has elements of the form $1\otimes a$ and $x\otimes a$ for $a\in H^{\ast}(M)$. Since $M$ is $(2k-1)$-connected, $H^{m}(M)\cong 0$ for $1\leq m<2k$. By Poincar\'{e} Duality, $H^{m}(M)\cong 0$ for $n-2k<m<n$ and $H^{n}(M)\cong\varmathbb{Z}$. Let $w\in H^{n}(M)$ represent the generator.
\medskip 
 
\textit{Step 1:} We show that $d^{2k}(x)=y$ for some $y\in H^{2k}(M)$. Since the $E_{2}$-page 
is concentrated in two rows separated by $2k-1$ rows of zeroes, for degree reasons the only possible non-trivial differential is~$d^{2k}$. Since the map \(S^{2k-1}\rightarrow E\) is null homotopic, it induces the zero map in cohomology. Therefore the element $x\otimes 1$ in the $E_{2}$-page cannot survive the spectral sequence, implying that it is in the domain of a differential. As the only possible differential is $d^{2k}$ we obtain $d^{2k}(x)=y$ for some $y\in H^{2k}(M)$. Note that as~$M$ is $(2k-1)$-connected, the Universal Coefficient Theorem implies that $H^{2k}(M)$ is torsion-free, so $y$ generates a $\mathbb{Z}$-summand. 
\medskip  

\textit{Step 2:} We show that $w$ is in the image of $d^{2k}$. As $d^{2k}$ is a derivation and $d^{2k}(a)=0$ for all $a\in H^{2k}(M)$, the fact that $d^{2k}(x)=y$ implies that $d^{2k}(x\otimes a)=y\otimes a$ for any $a\in H^{\ast}(M)$. By Poincar\'{e} Duality, as~$y$ generates a $\mathbb{Z}$-summand in $H^{2k}(M)$, there is a class $z\in H^{n-2k}(M)$ such that  $y\cup z=w$. Then $d^{2k}(x\otimes z)=y\otimes z=y\cup z=w$. 
\medskip 

\textit{Step 3:} Consider the elements of highest degree in $H^{\ast}(E)$. Observe that the elements of highest degree in $H^{\ast}(M)$ occur in degrees $n-2k$ and $n$. Any 
element $v\in H^{n-2k}(M)$ results in an element $x\otimes v$ in degree $n-1$ on the 
$E_{2}$-page. Thus the only elements of degree~$\geq n$ on the $E^{2}$ page are  
$1\otimes w$ and $x\otimes w$ in degrees $n$ and $n+2k-1$ respectively. By 
Step 2, $1\otimes w=w$ does not survive the spectral sequence while $x\otimes w$ does survive the spectral sequence for degree reasons. Thus $H^{\ast}(E)$ has only one element of degree~$\geq n$, which occurs in degree $n+2k-1$. 
\medskip 

\textit{Step 4:} Bringing the above arguments together, by Step 3 the $(n+2k-2)$-skeleton $\overline{E}$ of $E$ has dimension~$<n$. Therefore the composite \(\overline{E} \rightarrow E \xrightarrow{g} M\) factors through the $(n-1)$-skeleton $\overline{M}$ of $M$. This gives the homotopy commutative diagram asserted by the proposition. 
\end{proof} 

\begin{lem} \label{lem:Ehatproperty2} 
    If $\overline{M}$ is not contractible then the maps \(\widehat{\alpha}:S^{2k-1} \rightarrow \widehat{E}\) 
    and \(\alpha_{N}:S^{2k-1}\rightarrow{E_N}\) in~(\ref{dgm:null}) are null homotopic. 
\end{lem}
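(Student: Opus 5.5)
The plan is to use the standard criterion: in a homotopy fibration $\Omega B\xrightarrow{\partial}F\xrightarrow{\alpha}E\rightarrow B$, the map $\alpha$ is null homotopic if and only if $\partial$ has a right homotopy inverse, up to homotopy. One direction is immediate from the fibration sequence. If $s$ is a right homotopy inverse of $\partial$, then $\alpha\simeq\alpha\circ\partial\circ s$, and $\alpha\circ\partial$ is null homotopic, so $\alpha$ is null.

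Since $\alpha$ is null homotopic, we may therefore fix a right homotopy inverse $s\colon S^{2k-1}\rightarrow\Omega M$ for the connecting map $\partial\colon\Omega M\rightarrow S^{2k-1}$. The pullback fibration over $\overline{M}$ in~(\ref{dgm:null}) has connecting map $\widehat{\partial}=\partial\circ\Omega i_{M}\colon\Omega\overline{M}\rightarrow S^{2k-1}$, by naturality of connecting maps for the pullback along $i_{M}\colon\overline{M}\rightarrow M$. It therefore suffices to lift $s$ through $\Omega i_{M}$.

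First I show that $n>2k$; in fact $n\geq 4k$. Since $\overline{M}$ is not contractible and simply connected, $M$ has some nonzero reduced homology in a degree $m<n$. Because $M$ is $(2k-1)$-connected, we have $m\geq 2k$. Poincar\'e Duality, together with the Universal Coefficient Theorem, gives nonzero (co)homology in degree $n-m$ as well, so $n-m\geq 2k$. Hence $n\geq 4k>2k$. This is the step where the non-contractibility hypothesis is used, and it is the only real subtlety.

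Now let $s'\colon S^{2k}\rightarrow M$ be the adjoint of $s$. Since $2k\leq n-1$, cellular approximation factors $s'$ up to homotopy as $i_{M}\circ\bar{s}'$ for some $\bar{s}'\colon S^{2k}\rightarrow\overline{M}$. Let $\bar{s}\colon S^{2k-1}\rightarrow\Omega\overline{M}$ be the adjoint of $\bar{s}'$. Then $\Omega i_{M}\circ\bar{s}\simeq s$, so
\[
\widehat{\partial}\circ\bar{s}\simeq\partial\circ s\simeq 1 .
\]
Thus $\widehat{\partial}$ has a right homotopy inverse, and $\widehat{\alpha}\simeq\widehat{\alpha}\circ\widehat{\partial}\circ\bar{s}$ is null homotopic, since $\widehat{\alpha}\circ\widehat{\partial}$ is null as consecutive maps in a fibration sequence.

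Finally, the upper left square of~(\ref{dgm:null}) commutes with an identity on $S^{2k-1}$. So $\alpha_{N}$ is homotopic to the composite $S^{2k-1}\xrightarrow{\widehat{\alpha}}\widehat{E}\rightarrow E_{N}$, which is null homotopic. Alternatively, the same lifting argument applies directly to $\overline{M}\rightarrow M\conn N$.
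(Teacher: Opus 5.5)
Your proof is correct and is essentially the paper's argument: lift a right homotopy inverse of $\partial$ through $\Omega\overline{M}\rightarrow\Omega M$, conclude that the connecting map for $\widehat{E}\rightarrow\overline{M}$ has a right homotopy inverse so $\widehat{\alpha}\simeq\ast$, then pass to $\alpha_N$ via the upper left square of~(\ref{dgm:null}). The only minor difference is that you get the lift by cellular approximation on the adjoint $S^{2k}\rightarrow M$, which needs only $n>2k$. The paper instead uses $n\geq 4k$ to make $\overline{M}\rightarrow M$ a homotopy equivalence through dimension $4k-2$.
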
 

\begin{proof} 
First consider the skeletal inclusion \(\overline{M} \rightarrow M\). The homotopy cofibration \(S^{n-1}\rightarrow \overline{M} \rightarrow M\)  implies that there is a homotopy fibration \(F \rightarrow \overline{M} \rightarrow M\) where $F$ is $(n-2)$-connected. By hypothesis, $M$ is $(2k-1)$-connected and $n$-dimensional. Since $\overline{M}$ is not contractible, it too is $(2k-1)$-connected and has a nonzero cohomology class in degree~$\geq 2k$. By Poincar\'{e} Duality, this implies that $M$ is at least $4k$-dimensional. Thus $n\geq 4k$. Therefore the skeletal inclusion \(\overline{M} \rightarrow M\) is a homotopy equivalence in dimensions $\leq 4k-2$. 

Next, observe that the outer rectangle in~(\ref{dgm:null}) gives a diagram of homotopy fibration sequences  
\begin{equation} 
  \label{2partials} 
    \begin{tikzcd}[row sep=3em, column sep=3em]
        \Omega\overline{M} \arrow[r, "\overline{\partial}"] \arrow[d] & S^{2k-1} \arrow[r, "\widehat{\alpha}"] \arrow[d, equal] 
        & \widehat{E} \arrow[r] \arrow[d] 
        & \overline{M} \arrow[d] \\
        \Omega M \arrow[r, "\partial"] 
        & S^{2k-1} \arrow[r, "\alpha"] & E \arrow[r] & M\conn N
    \end{tikzcd}
\end{equation} 
where $\partial$ and $\overline{\partial}$ are the homotopy fibration connecting maps. 
Since $\alpha$ is null homotopic, $\partial$ has a right homotopy inverse \(s: S^{2k-1}\rightarrow \Omega M\). 
Since the skeletal inclusion \(\overline{M} \rightarrow M\) is a homotopy equivalence in dimensions $\leq 4k-2$, the map \(\Omega\overline{M} \rightarrow \Omega M\) is a homotopy equivalence in dimensions $\leq 4k-3$. 
Thus~$s$ lifts to a map \(\overline{s}:S^{2k-1} \rightarrow \Omega\overline{M}\). The homotopy commutativity of the left square in~(\ref{2partials}) then implies that $\overline{\partial}\circ\overline{s}\simeq\partial\circ s$, implying in turn that $\overline{s}$ is a right homotopy inverse for $\overline{\partial}$. Therefore in the homotopy fibration
\[
    \Omega\overline{M} \xlongrightarrow{\overline{\partial}} S^{2k-1} \xlongrightarrow{\widehat{\alpha}} \widehat{E}
\]
the map $\widehat{\alpha}$ is null homotopic. Finally, as $\widehat{\alpha}$ is null homotopic, the 
homotopy commutativity of the upper left square in~(\ref{dgm:null}) implies that $\alpha_N$ is also null homotopic.
\end{proof} 

What follows is the main result of this section, which identifies $E_{N}$ as a homotopy pushout. For maps \(a:\Sigma X \rightarrow Z\) and \(b:\Sigma Y  \rightarrow Z\), let \([a,b]:\Sigma X\wedge Y \rightarrow Z\) be their Whitehead product. 

\begin{prop}\label{prop:betagamma}
    There is a diagram of iterated homotopy pushouts
    \begin{equation*} 
        \begin{tikzcd}[row sep=3em, column sep=3em]
            S^{n-1}\amsrtimes S^{2k-1} \arrow[r, "f_N\amsrtimes 1"] \arrow[d, "\gamma"] & \overline{N}\amsrtimes S^{2k-1} \arrow[r] \arrow[d, "\beta"] & \ast \arrow[d]  \\
            \widehat{E} \arrow[r] & E_N \arrow[r] & E
        \end{tikzcd}
    \end{equation*}
    for some maps \(\beta\) and \(\gamma\), where: 
    \begin{itemize} 
    \item[(i)] $\gamma$ satisfies a homotopy commutative diagram
    \begin{equation*} 
        \begin{tikzcd}[row sep=3em, column sep=3em]
           S^{n-1} \arrow[r, "i"] \arrow[dr, equal] & S^{n-1}\amsrtimes S^{2k-1} \arrow[r, "\gamma"] \arrow[d, "\pi"] & \widehat{E} \arrow[d, "\widehat{g}"]  \\
           & S^{n-1} \arrow[r, "f_M"] & \overline{M};
        \end{tikzcd}
    \end{equation*} 
    \item[(ii)] if $S^{2k-1}$ has a classifying space 
    $BS^{2k-1}$ (integrally if $k\in\{1,2\}$ or rationally for any $k\geq 1$) and \(g:E \rightarrow M\) is induced by a map \(\zeta: M \rightarrow BS^{2k-1}\), then $\gamma$ can be chosen to satisfy part~(i) as well as a homotopy commutative diagram (integrally if $k\in\{1,2\}$ or rationally for any $k\geq 1$) 
    \begin{equation*}
        \begin{tikzcd}[row sep=3em, column sep = 6em]
                S^{n-1}\amsrtimes S^{2k-1} \arrow[d, "\simeq"] \arrow[r, "\gamma"] & \widehat{E} \arrow[d, "\widehat{g}"] \\
                S^{n-1}\vee S^{n+2k-2} \arrow[r, "f_M\perp {[f_M,s]}"] & \overline{M}
        \end{tikzcd}
    \end{equation*} 
    where $s$ is the composite \(S^{2k-1} \xrightarrow{\widehat{s}} \Omega\overline{M} \xrightarrow{ev} \overline{M}\) of the evaluation map $ev$ and a right homotopy inverse $\widehat{s}$ for 
    \[
        \Omega\overline{M}\rightarrow\Omega M\xrightarrow{\Omega\zeta} S^{2k-1}.
    \] 
    \end{itemize} 
\end{prop}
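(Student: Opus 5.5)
The plan is to build $E_N$ with Mather's Cube Lemma, pulling back the fibration $g_N$ over the homotopy pushout for $M\conn N$ in Lemma~\ref{lem:connsumpo}. Then I collapse copies of $S^{2k-1}$, using the null homotopy of $\widehat{\alpha}$ from Lemma~\ref{lem:Ehatproperty2}, to pass from products to half-smashes.

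\emph{Left square.} Pull $g_N\colon E_N\to M\conn N$ back over each corner of the pushout $\overline{M}\xleftarrow{f_M}S^{n-1}\xrightarrow{f_N}\overline{N}$.
\begin{itemize}
\item Over $\overline{M}$ the pullback is $\widehat{E}$, by~(\ref{dgm:null}).
\item The composite $\overline{N}\to M\conn N\xrightarrow{p}M$ is null homotopic, since $p$ collapses $\overline{N}$. Hence the pullback over $\overline{N}$ is $\overline{N}\times S^{2k-1}$.
\item The same holds over $S^{n-1}$, which maps to $M\conn N$ through $\overline{N}$. The pullback there is $S^{n-1}\times S^{2k-1}$, and the map to $\overline{N}\times S^{2k-1}$ is $f_N\times 1$.
\end{itemize}
The Cube Lemma then gives a homotopy pushout
\[
\widehat{E}\xleftarrow{\ \gamma'\ }S^{n-1}\times S^{2k-1}\xrightarrow{f_N\times 1}\overline{N}\times S^{2k-1}\longrightarrow E_N .
\]
Here $\gamma'$ is a map over $f_M$, in the sense that $\widehat{g}\circ\gamma'\simeq f_M\circ\pi_1$. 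Its restriction to $S^{2k-1}$ is the fibre inclusion $\widehat{\alpha}$, which is null homotopic by Lemma~\ref{lem:Ehatproperty2}. The restriction of $f_N\times 1$ to $S^{2k-1}$ is $i_2$.

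\emph{Passing to half-smashes.} Using the cofibration $S^{2k-1}\xrightarrow{i_2}X\times S^{2k-1}\to X\rtimes S^{2k-1}$, the map $\gamma'$ extends over the quotient to a map $\gamma\colon S^{n-1}\rtimes S^{2k-1}\to\widehat{E}$. I then collapse $S^{2k-1}$ from the whole pushout diagram, as was done for gyrations in~(\ref{gyrationpo2temp}). This yields the left homotopy pushout square with $\beta$ the induced map. Part~(i) follows because $\gamma\circ i$ is the restriction of $\gamma'$ to $S^{n-1}\times\ast$, and $\widehat{g}\circ\gamma'\simeq f_M\circ\pi_1$. One point needs care: the collapse uses a choice of null homotopy of $\widehat{\alpha}$. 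I would fix that choice, and define $\gamma$ by it.

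\emph{Right square.} Apply the Cube Lemma instead to the cofibration $\overline{N}\to M\conn N\xrightarrow{p}M$, viewed as the pushout of $\overline{N}\to M\conn N$ and $\overline{N}\to\ast$, and pull $g_N$ back over it.
\begin{itemize}
\item The pullbacks over $\overline{N}$, $\ast$ and $M$ are $\overline{N}\times S^{2k-1}$, $S^{2k-1}$ and $E$ respectively.
\item So $E$ is the pushout of $E_N\leftarrow\overline{N}\times S^{2k-1}\to S^{2k-1}$.
\item Collapsing $S^{2k-1}$ again gives a cofibration $\overline{N}\rtimes S^{2k-1}\xrightarrow{\beta}E_N\to E$.
\end{itemize}
The remaining issue is compatibility: the $\beta$ appearing here must be the same map as the $\beta$ in the left square, so that the two squares genuinely juxtapose into one diagram of iterated pushouts. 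Both squares come from pulling back a single fibration over the iterated pushout $S^{n-1}\to\overline{N}\to\ast$ versus $\overline{M}\to M\conn N\to M$. I therefore expect this to follow from the juxtaposed-cubes version of Mather's lemma in Appendix~\ref{appendix}.

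\emph{Part (ii).} When $g$ is induced by $\zeta\colon M\to BS^{2k-1}$, all of these fibrations are principal, with the pullbacks determined by the composites $\zeta|_{\overline{M}}$ and so on. Over $S^{n-1}$ the bundle is trivialised in two ways:
\begin{itemize}
\item via the null homotopy of $\zeta\circ f_M$ coming from the $n$-cell of $M$;
\item via the factorisation through $\overline{N}$.
\end{itemize}
The map $\gamma'$ is the map of principal bundles induced by this comparison. Composing with $\widehat{g}$ gives $f_M$ on $S^{n-1}$. On the top cell $S^{n+2k-2}$, chosen via $j$ of Lemma~\ref{lem:spherehalfsmash}, the composite records the action of $S^{2k-1}$ through $\widehat{s}$ on the image of $f_M$. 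That action, once the $S^{2k-1}$-factor is collapsed, is the Whitehead product $[f_M,s]$, by the standard identification of the map $X\times S^{2k-1}\to \overline{M}$ given by a holonomy action with a Whitehead product on the top cell. Making this identification precise is the step I expect to be the main obstacle. It requires choosing $\gamma$ (through the choice of null homotopy) compatibly with $\widehat{s}$, and it only works where $BS^{2k-1}$ exists, which is why part~(ii) is stated only integrally for $k\in\{1,2\}$ or rationally.
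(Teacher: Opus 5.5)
The construction of the iterated pushout and your argument for (i) follow the paper's route. The paper applies Theorem~\ref{homotopycube} once to $S^{n-1}\xrightarrow{f_N}\overline{N}\to\ast$ over $\overline{M}\to M\conn N\to M$ together with $g$. (In your right-hand square it is $g$, not $g_N$, that is being pulled back.) It then identifies the pulled-back spaces compatibly as $S^{n-1}\times S^{2k-1}$, $\overline{N}\times S^{2k-1}$ and $S^{2k-1}$, and collapses $S^{2k-1}$ using Lemma~\ref{lem:Ehatproperty2}.

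\textbf{The gap is part (ii).} Your holonomy description attributes $[f_M,s]$ to the bundle comparison that produces $\gamma'$. But you have shown $\widehat{g}\circ\gamma'\simeq f_M\circ\pi_1$, so nothing in $\gamma'$ can contribute a Whitehead product. The top-cell term comes entirely from the null homotopy of $\widehat{\alpha}$ used to extend $\gamma'$ over $S^{n-1}\amsrtimes S^{2k-1}$, a point you mention only in passing. The paper supplies this step by citing \cite{bt2}*{Theorem 3.4}: since $\widehat{E}\to\overline{M}$ is induced by $\widehat{\zeta}$ and $\Omega\widehat{\zeta}$ has the right homotopy inverse $\widehat{s}$, some extension satisfies $\widehat{g}\circ\gamma\simeq f_M\perp[f_M,s]$. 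It invokes \cite{t20}*{Remark 2.7} for compatibility with $\beta$.

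To prove it directly, take the null homotopy of $\widehat{\alpha}$ obtained from $\widehat{s}$ by path lifting. After composing with $\widehat{g}$, it differs from the constant null homotopy of $\widehat{g}\circ\widehat{\alpha}$ by the adjoint $s\in\pi_{2k}(\overline{M})$ of $\widehat{s}$. The coaction $S^{n-1}\amsrtimes S^{2k-1}\to(S^{n-1}\amsrtimes S^{2k-1})\vee S^{2k}$ of the cofibration $S^{2k-1}\xrightarrow{i_2}S^{n-1}\times S^{2k-1}\to S^{n-1}\amsrtimes S^{2k-1}$ sends $j$ to $j$ plus, up to sign, the Whitehead product of $i$ with the inclusion of $S^{2k}$. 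This converts $f_M\circ\pi$ into $f_M\perp[f_M,s]$. Changing the extension costs nothing elsewhere: the collapsed squares are obtained by pasting pushouts, so any extension $\gamma$ still gives the iterated pushout, with $\beta$ defined by the pushed-forward null homotopy.

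\textbf{On (i).} Your argument gives $\widehat{g}\circ\gamma\circ i\simeq f_M$. That is what the paper extracts and the only form used later (Lemma~\ref{lem:Sphere-->M-bar}), and you are right not to claim more. The paper also asserts the square $\widehat{g}\circ\gamma\simeq f_M\circ\pi$ ``by collapsing''. That does not follow from $\widehat{g}\circ\gamma'\simeq f_M\circ\pi_1$, because of the same null-homotopy dependence you flag. It is also incompatible with (ii) whenever $[f_M,s]$ has infinite order: precomposing both with $j$ (identifying via $i\perp j$) would force $[f_M,s]\simeq f_M\circ\pi\circ j\simeq\ast$. For example, take $M=S^2\times S^4$, $E=S^3\times S^4$, $k=1$. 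Then $f_M=[\iota_2,\iota_4]$ and $s=\iota_2$, and $[[\iota_2,\iota_4],\iota_2]$ is a Hilton--Milnor basic product of infinite order.
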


\begin{proof}
Consider the diagram 
\begin{equation*}
\begin{tikzcd}[row sep=3em, column sep=3em]
     S^{n-1} \arrow[r, "f_{N}"] \arrow[d, "f_{M}"] & \overline{N} \arrow[r] \arrow[d] & \ast\arrow[d]  \\
     \overline{M} \arrow[r] & M\conn N \arrow[r] & M.  
\end{tikzcd} 
\end{equation*} 
The left square is the homotopy pushout defining $M\conn N$.  The right square is a homotopy pushout  since~$M$ is the homotopy cofibre of the map \(\overline{N} \rightarrow M\conn N\). Thus this is a diagram of iterated homotopy pushouts. Using the map \(g:E \rightarrow M\), apply Theorem~\ref{homotopycube} to obtain a homotopy commutative cube
\begin{equation}\label{FGHcube}
    \begin{tikzcd}[row sep=1em, column sep=1em]
        F \arrow[rr] \arrow[dd] \arrow[dr] && G \arrow[rr] \arrow[dr] \arrow[dd, near start, swap] &&
        H \arrow[dd] \arrow[dr] \\
        & \widehat{E} \arrow[rr, crossing over] && E_{N} \arrow[rr, crossing over] && E \arrow[dd] \\
        S^{n-1} \arrow[rr, near start, "f_N"] \arrow[dr, swap, "f_M"] && \overline{N} \arrow[rr] \arrow[dr] && \ast\arrow[dr]\\
        & \overline{M} \arrow[rr] \arrow[from=uu, crossing over] && M\#N \arrow[rr] \arrow[from=uu, crossing over] && M \\
    \end{tikzcd}
\end{equation}
where the sides of each cube are homotopy pullbacks, these pullbacks define $F$, $G$ and $H$, and the top face of each cube is a homotopy pushout.  

We wish to identify $F$, $G$ and $H$ and the maps between them, up to homotopy equivalence. To do this, label the maps \(H\rightarrow \ast\), \(G\rightarrow \overline{N}\) and \(F\rightarrow S^{n-1}\) in~(\ref{FGHcube}) as $a$, $b$ and $c$ respectively. We will construct a homotopy commutative diagram  
\begin{equation}\label{dgm:rectangle}   
    \begin{tikzcd}[row sep=3em, column sep=3em]
       S^{n-1}\times S^{2k-1} \arrow[r, "f_N\times 1"] \arrow[d, "c'"] & \overline{N}\times S^{2k-1} \arrow[r, "\pi_{2}"] \arrow[d, "b'"] & S^{2k-1} \arrow[d, "a'"]  \\
       F \arrow[r] \arrow[d, "c"] & G \arrow[r] \arrow[d, "b"] & H \arrow[d,  "a"] \\ 
       S^{n-1} \arrow[r, "f_{N}"] & \overline{N} \arrow[r] & \ast  
    \end{tikzcd}
\end{equation} 
where the two upper squares are homotopy pullbacks and the maps $a'$, $b'$ and $c'$ are homotopy equivalences. 
First consider the face $H$-$E$-$\ast$-$M$ in~(\ref{FGHcube}). As this face is a homotopy pullback and there is a homotopy fibration  \(S^{2k-1}\xrightarrow{\alpha} E\rightarrow M\),  there is a homotopy equivalence \(a': S^{2k-1}\rightarrow H\) such that the composite \(S^{2k-1}\xrightarrow{a'} H\rightarrow E\) is $\alpha$. Second, consider the face $G$-$H$-$\overline{N}$-$\ast$ in~(\ref{FGHcube}). As this is a homotopy pullback over a point, there is a homotopy equivalence \(b':\overline{N}\times S^{2k-1}\rightarrow G\) 
that makes the upper right square in~(\ref{dgm:rectangle}) homotopy commute, and for which $b\circ b'\simeq\pi_{1}$. Note that as $a'$ and $b'$ are homotopy equivalences, the upper right square in~(\ref{dgm:rectangle}) is a homotopy pullback. Third, consider the face $F$-$G$-$S^{n-1}$-$\overline{N}$ in~(\ref{FGHcube}).  Since $b\circ b'\simeq\pi_{1}$, there is a homotopy pullback 
\begin{equation*} 
    \begin{tikzcd}[row sep=3em, column sep=3em]
        S^{n-1}\times S^{2k-1} \arrow[r, "f_{N}\times 1"] \arrow[d, "\pi_{1}"] & \overline{N}\times S^{2k-1} \arrow[d,"b\circ b'"] \\
        S^{n-1} \arrow[r, "f_{N}"] & \overline{N}.  
    \end{tikzcd}
\end{equation*} 
Since $F$-$G$-$S^{n-1}$-$\overline{N}$ is a homotopy pullback, the maps $b'\circ(f_{N}\times 1)$ and $\pi_{1}$ induce a homotopy pullback map 
\(c': S^{n-1}\times S^{2k-1}\rightarrow F\) 
that makes the upper left square in~(\ref{dgm:rectangle}) homotopy commute, and for which $c\circ c'\simeq\pi_{1}$. Since the lower left square and left rectangle in~(\ref{dgm:rectangle}) are homotopy pullbacks, so is the upper left square. The fact that $b'$ is a homotopy equivalence then implies that $c'$ is a homotopy equivalence. Hence we have constructed the two upper squares in~(\ref{dgm:rectangle}) as asserted. 

Juxtapose the upper squares in~(\ref{dgm:rectangle}) with the two top faces in~(\ref{FGHcube}) to obtain a homotopy commutative diagram 
\begin{equation}\label{dgm:rectanglepo}   
    \begin{tikzcd}[row sep=3em, column sep=3em]
       S^{n-1}\times S^{2k-1} \arrow[r, "f_N\times 1"] \arrow[d, "c'"] & \overline{N}\times S^{2k-1} \arrow[r, "\pi_{2}"] \arrow[d, "b'"] & S^{2k-1} \arrow[d, "a'"]  \\
       F \arrow[r] \arrow[d] & G \arrow[r] \arrow[d] & H \arrow[d] \\ 
       \widehat{E} \arrow[r] & E_{N} \arrow[r] & E.  
    \end{tikzcd}
\end{equation} 
Since the two lower squares are homotopy pushouts and the two upper squares homotopy commute with $c'$, $b'$ and~$a'$ being homotopy equivalences, the left and right rectangles are homotopy pushouts. By definition of $a'$, the composite 
\(S^{2k-1}\xrightarrow{a'} H\rightarrow E\) 
is $\alpha$. The pullbacks defining $\widehat{E}$ and $E_{N}$ in~(\ref{dgm:null}) then imply that the restriction of the left and middle columnns in~(\ref{dgm:rectanglepo}) to $S^{2k-1}$ are $\widehat{\alpha}$ and $\alpha_{N}$ respectively. The null homotopy for $\widehat{\alpha}$ in Lemma~\ref{lem:Ehatproperty2} then implies there are compatible null homotopies for $\alpha_{N}$ and $\alpha'$, and therefore the copy of $S^{2k-1}$ in the homotopy pushouts given by the left and right rectangles in~(\ref{dgm:rectanglepo}) 
can be collapsed out, giving a diagram of homotopy pushouts  
\begin{equation} \label{dgm:betagamma}
    \begin{tikzcd}[row sep=3em, column sep=3em]
       S^{n-1}\amsrtimes S^{2k-1} \arrow[r, "f_N\amsrtimes 1"] \arrow[d, "\gamma"] & \overline{N}\amsrtimes S^{2k-1} \arrow[r] \arrow[d, "\beta"] & \ast \arrow[d]  \\
       \widehat{E} \arrow[r] & E_N \arrow[r] & E
    \end{tikzcd}
\end{equation}
where $\gamma$ and $\beta$ are quotient maps. This is the iterated homotopy pushout asserted to exist by the proposition.  
\medskip 

\textit{Property (i):} As the homotopy equivalence $c'$ identified the map \(F \rightarrow S^{n-1}\) in~(\ref{FGHcube}) as the projection map \(\pi_1:S^{n-1}\times S^{2k-1} \rightarrow S^{n-1}\),
the left-most face of the commutative cube (\ref{FGHcube}) has been identified as a homotopy pullback 
\begin{equation*} 
    \begin{tikzcd}[row sep=3em, column sep=3em]
        S^{n-1}\times S^{2k-1} \arrow[r] \arrow[d, "\pi_1"] & \widehat{E} \arrow[d, "\widehat{g}"]  \\
        S^{n-1} \arrow[r, "f_M"] & \overline{M}. 
    \end{tikzcd}
\end{equation*} 
Here, the upper horizontal map is the composite \(c'':S^{n-1}\times S^{2k-1} \xrightarrow {c'} F \rightarrow \widehat{E}\),  so as the restriction of this composite to $S^{2k-1}$ is null homotopic and collapsing out \(S^{2k-1}\) defined \(\gamma\), we obtain a homotopy commutative square
\begin{equation*} 
    \begin{tikzcd}[row sep=3em, column sep=3em]
           S^{n-1}\amsrtimes S^{2k-1} \arrow[r, "\gamma"] \arrow[d, "\pi"] & \widehat{E} \arrow[d, "\widehat{g}"]  \\
           S^{n-1} \arrow[r, "f_M"] & \overline{M}.
    \end{tikzcd}
\end{equation*} 
Since $\pi$ is a left inverse for \(i:S^{n-1} \rightarrow S^{n-1}\amsrtimes S^{2k-1}\), we obtain $f_{M}\simeq f_{M}\circ\pi\circ i\simeq\widehat{g}\circ\gamma\circ i$, as asserted. 
\medskip 

\textit{Property (ii):} Let $\widehat{\zeta}$ be the composite
\[
    \widehat{\zeta}:\overline{M}\longrightarrow M\xlongrightarrow{\zeta} BS^{2k-1}.
\]
The definition of $\widehat{E}$ has a homotopy pullback in~(\ref{dgm:null}) implies that the homotopy fibration 
\[
    S^{2k-1}\xlongrightarrow{\widehat{\alpha}}\widehat{E}\xlongrightarrow{\widehat{g}}\overline{M}
\] 
is induced by $\widehat{\zeta}$. By Lemma~\ref{lem:Ehatproperty2}, $\widehat{\alpha}$ is null homotopic, so $\Omega\widehat{\zeta}$ has a right homotopy inverse, which we shall write as \(\widehat{s}:S^{2k-1} \rightarrow \Omega\overline{M}\).

The map \(\gamma:S^{n-1}\amsrtimes S^{2k-1} \rightarrow \widehat{E}\) was defined as an extension that existed because the restriction of \(c''\) to $S^{2k-1}$ is null homotopic, but note that there could be different choices of such an extension. In~\cite{bt2}*{Theorem 3.4} it was shown that, because \(\widehat{E} \rightarrow \overline{M}\) is induced by $\widehat{\zeta}$ and $\Omega\widehat{\zeta}$ has a right homotopy inverse, there is a choice of extension $\gamma$ such that the diagram in the statement of part~(ii) homotopy commutes. Furthermore, the construction in~\cite{bt2} satisfies a naturality property described in~\cite{t20}*{Remark 2.7}. This ensures that the null homotopies for $\alpha_{N}$ and $\alpha'$ - both of which are connecting maps of fibrations over $BS^{2k-1}$ - are compatible. Moreover the choice of extension defining $\gamma$ that satisfies the diagram as stated in part~(ii) is compatible with a corresponding extension of $\beta$, so that~(\ref{dgm:betagamma}) still homotopy commutes. 
\end{proof}

\section{The Homotopy Type of \(\widehat{E}\)}\label{sec:E-hat_htpytype}

This section proves a splitting for \(\widehat{E}\). To begin we record connectivity and dimension information.

\begin{lem}\label{lem:dim+conn}
    Let \(M\) be \((2k-1)\)-connected with \(\overline{M}\) not contractible. Then
    \begin{enumerate}
        \item[(i)] \(E\) is \((n+2k-1)\)-dimensional, and is at least \((2k-1)\)-connected;
        \item[(ii)] \(\widehat{E}\) is at most \((n+2k-3)\)-dimensional, and is \((2k-1)\)-connected.
    \end{enumerate}
\end{lem}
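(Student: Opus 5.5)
Part (i) needs two things. The dimension of $E$ is $n+2k-1$ by Quinn's theorem, already recorded at the start of Section~\ref{sec:E_manipulations}. For connectivity, use the long exact sequence of homotopy groups of $S^{2k-1}\xrightarrow{\alpha}E\xrightarrow{g}M$. Since $\alpha$ is null homotopic, $\alpha_*=0$, so $g_*:\pi_m(E)\to\pi_m(M)$ is injective for all $m$. As $M$ is $(2k-1)$-connected, $\pi_m(E)=0$ for $m\leq 2k-1$. Hence $E$ is $(2k-1)$-connected.

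For part (ii), connectivity is the same argument applied to the fibration $S^{2k-1}\xrightarrow{\widehat{\alpha}}\widehat{E}\xrightarrow{\widehat{g}}\overline{M}$ in~(\ref{dgm:null}). Lemma~\ref{lem:Ehatproperty2} gives $\widehat{\alpha}\simeq\ast$, since $\overline{M}$ is not contractible. Also $\overline{M}$ is $(2k-1)$-connected, because it is the $(n-1)$-skeleton of $M$ and $n-1\geq 2k$ (from $n\geq 4k$, as in the proof of Lemma~\ref{lem:Ehatproperty2}). Thus $\widehat{g}_*$ is injective on homotopy groups, and $\widehat{E}$ is $(2k-1)$-connected.

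The dimension bound for $\widehat{E}$ is the main point, and I would argue it homologically with the Serre spectral sequence, as in Proposition~\ref{prop:skeltoskel}.
\begin{itemize}
\item[(a)] The $E_2$-page is $H^*(S^{2k-1})\otimes H^*(\overline{M})$. By the discussion in the proof of Proposition~\ref{prop:skeltoskel}, $H^*(\overline{M})$ is $H^*(M)$ with the top class $w$ removed. It is concentrated in degrees $2k\leq m\leq n-2k$, apart from degree $0$.
\item[(b)] The nonzero entries therefore lie in total degrees at most $(n-2k)+(2k-1)=n-1$. This uses $n-2k\leq n-2k$ and needs no differentials.
\item[(c)] The only differential is $d^{2k}(x)=y$, by the same reasoning as Step~1 of Proposition~\ref{prop:skeltoskel}, now using $\widehat{\alpha}\simeq\ast$. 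It can only reduce the page further.
\end{itemize}
So $H^*(\widehat{E})$ vanishes above degree $n-1$. Since $n-1\leq n+2k-3$ exactly when $k\geq 1$, the cohomology of $\widehat{E}$ is concentrated in degrees at most $n+2k-3$.

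To turn this into a statement about dimension, I would give an explicit cellular model. The map $\widehat{E}\to E_N$ with cofibre built from the half-smash in Proposition~\ref{prop:betagamma} is one option. A cleaner route: $\widehat{E}\simeq\overline{M}\times S^{2k-1}$ after collapsing $S^{2k-1}$ (both vanish in $\widehat{E}$), or more simply, the pullback of a fibration with fibre $S^{2k-1}$ over an $(n-1)$-dimensional complex. That pullback has a CW structure of dimension $(n-1)+(2k-1)=n+2k-2$. Its top cells pair with the top cells of $\overline{M}$, which lie in degree $n-2k$ after Poincaré duality. Using the minimal cell structure of $\overline{M}$ (top cells in degree $\leq n-2k$), the dimension is at most $n-1\leq n+2k-3$.

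The delicate step is justifying that $\overline{M}$ admits a CW model of dimension $\leq n-2k$. This comes from the homology vanishing in degrees $n-2k<m<n$, via minimal cell structures for simply connected complexes with torsion-free top homology. I would invoke this standard fact; $H_{n-2k}(\overline{M})$ is free by duality with $H^{2k}$.
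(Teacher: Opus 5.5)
Your argument reaches the right conclusion, in fact a sharper one, and its final form is essentially the paper's argument with a better input. Part (i) and the connectivity claim in (ii) match the paper: you use injectivity of $g_*$ and $\widehat{g}_*$ from the exact sequence, where the paper uses the equivalent splitting $\Omega M\simeq S^{2k-1}\times\Omega E$.

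For the dimension bound, the paper's argument is two lines. Simple connectivity and Poincar\'e Duality give $\dim\overline{M}\leq n-2$. The total space of an $S^{2k-1}$-fibration over $\overline{M}$ therefore has dimension at most $(n-2)+(2k-1)=n+2k-3$. You instead use $\dim\overline{M}\leq n-2k$, which follows from $H_m(M)=0$ for $n-2k<m<n$ and freeness of $H_{n-2k}$. This gives $\dim\widehat{E}\leq n-1$, which is stronger when $k>1$ and consistent with the later splitting $\widehat{E}\simeq S^{n-1}\vee\overline{E}$.

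Your Serre spectral sequence bound would also suffice on its own. Combined with the Universal Coefficient Theorem, it gives $H_m(\widehat{E})=0$ for $m\geq n$ and $H_{n-1}(\widehat{E})$ free. A minimal cell structure on the simply connected, finite type space $\widehat{E}$ then already has dimension at most $n-1$. So you need either the spectral sequence or a cell structure on the fibration, not both.

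Two sentences in your last paragraph should be deleted.
\begin{enumerate}
\item The claim $\widehat{E}\simeq\overline{M}\times S^{2k-1}$ ``after collapsing $S^{2k-1}$'' is false, because the fibration is not trivial. By your own step (c), $d^{2k}(x)=y\neq 0$. Hence $H^{2k}(\widehat{E})\cong H^{2k}(\overline{M})/\langle y\rangle$ has rank one less than $H^{2k}$ of $\overline{M}\times S^{2k-1}$ or of its quotient by $S^{2k-1}$. Moreover, the product is not $(2k-1)$-connected, whereas you have just shown that $\widehat{E}$ is.
\item The count $(n-1)+(2k-1)=n+2k-2$, obtained from $\dim\overline{M}\leq n-1$, falls one short of the claimed bound. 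You need at least $\dim\overline{M}\leq n-2$, which is exactly what the paper uses.
\end{enumerate}
Neither sentence is load-bearing once you invoke the minimal cell structure of $\overline{M}$, so your proof stands without them.
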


\begin{proof}
    \textit{Part (i):} As noted earlier, the fact that \(S^{2k-1}\xrightarrow{\alpha} E\xrightarrow{g} E\) is a homotopy fibration of Poinca\'e Duality complexes implies that $E$ is \((n+2k-1)\)-dimensional. Since \(\alpha\simeq\ast\), there is a homotopy equivalence \(\Omega M\simeq S^{2k-1}\times\Omega E\), and therefore \(\pi_{m}(M)\cong\pi_{m-1}(S^{2k-1})\oplus\pi_{m}(E)\) for all $m\geq 1$. In particular, as $M$ is $(2k-1)$-connected this implies that \(\pi_{m}(E)\cong0\) for all \(m\leq2k-1\), so $E$ is at least $(2k-1)$-connected.
    
    \textit{Part (ii):} By hypothesis $M$ is at least simply-connected, so Poincar\'{e} Duality implies that $\overline{M}$ has dimension at most \(n-2\). The homotopy fibration \(S^{2k-1} \xrightarrow{\widehat{\alpha}} \widehat{E} \rightarrow \overline{M}\) then implies that $\widehat{E}$ is at most $(n+2k-3)$-dimensional. By Lemma~\ref{lem:Ehatproperty2}, $\widehat{\alpha}$ is null homotopic, so there is a homotopy equivalence $\Omega\overline{M}\simeq S^{2k-1}\times\Omega\widehat{E}$. 
    Arguing as in Part (i) shows that $\widehat{E}$ is also $(2k-1)$-connected.  
\end{proof} 

The diagram in the statement of Proposition~\ref{prop:betagamma} is an 
iterated homotopy pushout. This implies that the outer rectangle 
\begin{equation*}
    \begin{tikzcd}[row sep=3em, column sep=3em]
        S^{n-1}\amsrtimes S^{2k-1} \arrow[r] \arrow[d, "\gamma"] & \ast \arrow[d] \\ 
        \widehat{E} \arrow[r] & E 
    \end{tikzcd}
\end{equation*}  
is itself a homotopy pushout. Thus there is a homotopy cofibration 
\begin{equation} 
  \label{Ecofib} 
  S^{n-1}\amsrtimes S^{2k-1}\xrightarrow{\gamma} \widehat{E}\rightarrow E. 
\end{equation} 
This is now refined using the inclusion \(i:S^{n-1} \rightarrow S^{n-1}\amsrtimes S^{2k-1}\).

\begin{lem}\label{lem:Ehatproperty4} 
    There is a homotopy cofibration diagram 
    \begin{equation*}  
        \begin{tikzcd}[row sep=3em, column sep = 3em]
                S^{n-1} \arrow[d, "i"] \arrow[r, equal] & S^{n-1} \arrow[d, "\gamma\circ i"] & \\
                S^{n-1}\amsrtimes S^{2k-1} \arrow[r, "\gamma"] \arrow[d, "q"] & \widehat{E} \arrow[r] \arrow[d] & E \arrow[d, equal] \\
                S^{n+2k-2} \arrow[r, "f_E"] & \overline{E} \arrow[r] & E.
        \end{tikzcd}     
    \end{equation*}  
    where $f_E$ is the attaching map for the top cell of $E$. 
\end{lem}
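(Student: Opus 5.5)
The plan is to build the diagram column by column from the cofibration~(\ref{Ecofib}), using the $3\times 3$ lemma for homotopy cofibrations. Start with the cofibration $S^{n-1}\xrightarrow{i}S^{n-1}\amsrtimes S^{2k-1}\xrightarrow{q}S^{n+2k-2}$ as the left column. Map it by $\gamma$, via the top identity square $S^{n-1}\xrightarrow{=}S^{n-1}$, to the middle column $S^{n-1}\xrightarrow{\gamma\circ i}\widehat{E}\to C$, where $C$ is the homotopy cofibre of $\gamma\circ i$. This gives a strictly commuting map of cofibration sequences. The induced map of cofibres $S^{n+2k-2}\to C$ then has homotopy cofibre equal to that of $\gamma$. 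By~(\ref{Ecofib}) that cofibre is $E$, and the map $C\to E$ is the one induced from $\widehat{E}\to E$. So we obtain a homotopy cofibration $S^{n+2k-2}\to C\to E$, and the whole diagram is homotopy commutative.

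The key step is to identify $C$ with the $(n+2k-2)$-skeleton $\overline{E}$, and the map $S^{n+2k-2}\to C$ with the top-cell attaching map $f_E$. For this I will use dimension and cell counting.

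By Lemma~\ref{lem:dim+conn}(ii), $\widehat{E}$ has dimension at most $n+2k-3$. Hence $C$, obtained by attaching one $n$-cell to $\widehat{E}$, has dimension at most $n+2k-3$; this uses $n\le n+2k-3$, which holds because $k\ge 2$ here, or by the degree bounds for $n\ge 4k$. The cofibration $S^{n+2k-2}\to C\to E$ then exhibits $E$, up to homotopy, as $C$ with a single $(n+2k-1)$-cell attached. Since $E$ is a Poincar\'e Duality complex of dimension $n+2k-1$ with a single top cell, the standard uniqueness of the $(n-1)$-skeleton of a minimal cell structure gives $C\simeq\overline{E}$. Concretely, the map $C\to E$ factors through $\overline{E}$ by cellular approximation, because $\dim C<n+2k-1$. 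This factored map is an isomorphism in homology in all degrees below $n+2k-1$ by the long exact sequences of the two cofibrations. Both spaces are simply connected, by Lemma~\ref{lem:dim+conn}, so Whitehead's theorem gives a homotopy equivalence $C\to\overline{E}$.

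Under this equivalence, the map $S^{n+2k-2}\to C\simeq\overline{E}$ has homotopy cofibre $E$ compatibly with the skeletal inclusion, so it is (a choice of) the attaching map $f_E$ of the top cell. The right-hand square then commutes with the identity on $E$.

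The main obstacle is this identification step, specifically making sure $C\to\overline{E}$ is a homology isomorphism in the top degree $n+2k-2$. I would handle it by comparing the long exact sequences of $S^{n+2k-2}\to C\to E$ and $S^{n+2k-2}\xrightarrow{f_E}\overline{E}\to E$. Both give $H_{n+2k-1}(E)\cong\mathbb{Z}$ mapping to $H_{n+2k-2}(S^{n+2k-2})$. Because $E$ is a Poincar\'e Duality complex, the boundary map $H_{n+2k-1}(E)\to H_{n+2k-2}(S^{n+2k-2})$ is an isomorphism in both cases. The homology of $C$ and of $\overline{E}$ therefore both agree with that of $E$ below the top degree, and the five lemma finishes the argument.
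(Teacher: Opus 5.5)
\textbf{Gap: the case $k=1$.} For $k\ge 2$ your argument is essentially the paper's. You form the cofibre $C$ of $\gamma\circ i$, obtain the induced cofibration $S^{n+2k-2}\to C\to E$, note $\dim C\le n+2k-3$, factor $C\to E$ through $\overline{E}$, and finish with a homology isomorphism and Whitehead's theorem.

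The problem is your claim that ``$k\ge 2$ here''. The setup of Section~\ref{sec:E_manipulations} allows $k\ge 1$, and $k=1$ (circle fibrations) is exactly the case used in Section~\ref{sec:applications}. When $k=1$ you have $n+2k-3=n-1<n$. Attaching the $n$-cell to $\widehat{E}$ (of dimension $\le n-1$) via $\gamma\circ i$ can therefore a priori give a complex $C$ of dimension $n=n+2k-2$ with $H_n(C)\neq 0$. Appealing to $n\ge 4k$ does not help, since $n\le n+2k-3$ is equivalent to $k\ge 2$ whatever $n$ is. This is precisely why the paper's proof treats $k=1$ separately.

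\textbf{Repair.} Your last paragraph points at the fix, but its justification (``because $E$ is a Poincar\'e Duality complex, the boundary map is an isomorphism'') is not sufficient by itself. Poincar\'e Duality, with $E$ simply-connected, gives $H_{n+1}(E)\cong\mathbb{Z}$ and $H_n(E)=0$. Since $\dim C\le n$, you also have $H_{n+1}(C)=0$. So the exact sequence $0\to H_{n+1}(E)\to H_n(S^n)\to H_n(C)\to H_n(E)=0$ only shows $H_n(C)\cong\mathbb{Z}/d$, where $d\neq 0$ is the degree of the boundary map.

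You need one more observation: $H_n(C)$ is torsion-free. For instance, $H_n(\widehat{E})=0$, so the cofibration $S^{n-1}\to\widehat{E}\to C$ makes $H_n(C)$ inject into $H_{n-1}(S^{n-1})\cong\mathbb{Z}$; or note that $C$ has dimension at most $n$. This forces $d=\pm 1$ and hence $H_n(C)=0$. With that in hand, $\dim C\le n<n+1$ still gives the cellular factorisation of $C\to E$ through $\overline{E}$. The rest of your argument then goes through unchanged. The paper reaches the same conclusion by a contradiction argument of the same kind.
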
 

\begin{proof} 
    Let $X$ be the homotopy cofibre of \(S^{n-1}\stackrel{\gamma\circ i}{\longrightarrow} \widehat{E}\). Then there is a homotopy cofibration diagram
    \begin{equation}\label{dgm:XisEbar}  
        \begin{tikzcd}[row sep=3em, column sep = 3em]
                S^{n-1} \arrow[d, "i"] \arrow[r, equal] & S^{n-1} \arrow[d, "\gamma\circ i"] & \\
                S^{n-1}\amsrtimes S^{2k-1} \arrow[r, "\gamma"] \arrow[d, "q"] & \widehat{E} \arrow[r] \arrow[d] & E \arrow[d, equal] \\
                S^{n+2k-2} \arrow[r, "f_\gamma"] & X \arrow[r] & E
        \end{tikzcd}     
    \end{equation}
    as in the statement of the lemma but with $\overline{E}$ replaced by $X$ and \(f_E\) replaced by the induced map of homotopy cofibres, denoted \(f_\gamma\). By Lemma \ref{lem:dim+conn}(ii), $\widehat{E}$ has dimension at most $n+2k-3$. There are two cases. 
    
    First, if $k>1$ then $n+2k-3>n-1$, so the homology exact sequence associated to the homotopy cofibration from the middle column of (\ref{dgm:XisEbar}) implies that the dimension of $X$ is also at most $n+2k-3$. But then as $E$ has dimension $n+2k-1$, the map $f_{\gamma}$ must attach the \((n+2k-1)\)-cell to $E$. This has two implications: first, that the map \(X \rightarrow E\) in (\ref{dgm:XisEbar}) factors through the $(n+2k-3)$-skeleton $\overline{E}$ of~$E$, resulting in a map \(\epsilon:X\rightarrow\overline{E}\) that induces an isomorphism in homology and is therefore a homotopy equivalence by Whitehead's Theorem, and second, that the composite \(\epsilon\circ f_\gamma\) is homotopic to the attaching map~\(f_E\). Therefore in this case we obtain the asserted homotopy cofibration diagram. 
    
    If $k=1$ then $n+2k-3=n-1$, so $\widehat{E}$ has dimension at most $n-1$. Thus it may be possible that $\gamma\circ i$ attaches an $n$-cell to $\widehat{E}$ in such a way that $H_{n}(X)$ is non-trivial, in which case $X$ is $n$-dimensional. As $E$ is a Poincar\'{e} Duality complex of dimension $n+2k-1=n+1$, $H_{n+1}(E) \cong \mathbb{Z}$. Therefore as $X$ is $n$-dimensional it must be the case that $(f_{\gamma})_{\ast}=0$. This implies that $H_{n}(X)$ injects into $H_{n}(E)$, but as $E$ is simply-connected, Poincar\'{e} duality implies that $H_{n}(E)\cong 0$, and therefore $H_{n}(X)\cong 0$, a contradiction. Thus $\gamma\circ i$ cannot attach an $n$-cell to $X$ that produces nontrivial degree $n$ homology, so $X$ is at most $(n-1)$-dimensional. Now argue as in the previous case to show there exists a homotopy equivalence \(\epsilon:X\rightarrow\overline{E}\) with $\epsilon\circ f_{\gamma}\simeq f_{E}$.
\end{proof}

We close this section with a splitting result which gives the homotopy type of \(\widehat{E}\). This requires a preparatory lemma.

\begin{lem} \label{lem:Ehatproperty1} 
   The skeletal inclusion \(\iota:\overline{E} \rightarrow E\) lifts to a map \(\kappa:\overline{E} \rightarrow \widehat{E}\) with the property that the composite \(\overline{E} \xrightarrow{\kappa} \widehat{E} \xrightarrow{\widehat{g}} \overline{M}\) is homotopic to $\overline{g}$.
\end{lem}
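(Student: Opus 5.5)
The lift will come from the universal property of the homotopy pullback $\widehat{E}$ in~(\ref{dgm:null}); Proposition~\ref{prop:skeltoskel} supplies the data. Composing the upper and lower rectangles of~(\ref{dgm:null}), the square
\[
\begin{tikzcd}[row sep=2em, column sep=2em]
\widehat{E} \arrow[r, "\widehat{g}"] \arrow[d] & \overline{M} \arrow[d] \\
E \arrow[r, "g"] & M
\end{tikzcd}
\]
is a homotopy pullback, where $\overline{M}\rightarrow M$ is the composite $\overline{M}\rightarrow M\conn N\xrightarrow{p} M$. As observed before~(\ref{dgm:null}), this composite is the skeletal inclusion $i_M$.

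By Proposition~\ref{prop:skeltoskel} there is a map $\overline{g}:\overline{E}\rightarrow\overline{M}$ with $i_M\circ\overline{g}\simeq g\circ\iota$. The pair of maps $\iota:\overline{E}\rightarrow E$ and $\overline{g}:\overline{E}\rightarrow\overline{M}$, together with a chosen homotopy $H$ between $g\circ\iota$ and $i_M\circ\overline{g}$, therefore determines a map $\kappa:\overline{E}\rightarrow\widehat{E}$ into the homotopy pullback. By construction the composite $\overline{E}\xrightarrow{\kappa}\widehat{E}\rightarrow E$ is homotopic to $\iota$, and $\widehat{g}\circ\kappa\simeq\overline{g}$.

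To be concrete, model $\widehat{E}$ as the standard homotopy pullback, consisting of triples $(e,\omega,m)$ with $\omega$ a path from $g(e)$ to $i_M(m)$. Then set $\kappa(x)=(\iota(x),H(x,-),\overline{g}(x))$. Both properties then hold on the nose. Transporting along the equivalence with whatever model of $\widehat{E}$ is used in~(\ref{dgm:null}) keeps them up to homotopy.

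There is no real obstacle. The one point to check is the identification of the composite $\overline{M}\rightarrow M\conn N\rightarrow M$ with $i_M$, which the paper already records. The content sits entirely in Proposition~\ref{prop:skeltoskel}, which gives the compressed map $\overline{g}$ compatible with $g$.
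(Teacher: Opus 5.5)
Your proposal is correct and follows the paper's proof. You juxtapose the two right-hand squares of~(\ref{dgm:null}) to obtain a homotopy pullback over $g$ and the skeletal inclusion $\overline{M}\rightarrow M$, then use the homotopy commutative square of Proposition~\ref{prop:skeltoskel} to induce $\kappa$ with both triangles homotopy commuting; your explicit path-space formula for $\kappa$ is simply a concrete version of that same universal-property step.
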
 

\begin{proof} 
Consider the diagram 
\begin{equation}
    \begin{tikzcd}[row sep=1.5em, column sep = 1.5em]
        && \overline{E} \arrow[dddr, bend right=20, swap, "\iota"] \arrow[dr, dashed, "\kappa"] \arrow[drrr, bend left=20, "\overline{g}"] &&&& \\
        &&& \widehat{E} \arrow[rr, "\widehat{g}"] \arrow[dd] && \overline{M} \arrow[dd] \\
        &&&&&&& \\
        &&& E \arrow[rr, "g"] && M.
    \end{tikzcd}
\end{equation} 
The square is obtained from juxtaposing the two righthand squares in~(\ref{dgm:null}), implying that it is a homotopy pullback. The homotopy commutativity of the square in the statement of Proposition~\ref{prop:skeltoskel} then implies that there is a pullback map $\kappa$ that makes the two triangular regions homotopy commute. The left triangular region implies that $\kappa$ lifts the skeletal inclusion \(\iota\) while the right triangular region implies that $\widehat{g}\circ\kappa\simeq\overline{g}$. 
\end{proof} 

\begin{prop}\label{prop:Ehatproperty3}  
    If $\overline{M}$ is not contractible then the homotopy cofibration \(S^{n-1}\xrightarrow{\gamma\circ i}\widehat{E}\rightarrow\overline{E}\) of Lemma~\ref{lem:Ehatproperty4} has a homotopy section \(\kappa:\overline{E}\rightarrow\widehat{E}\). Consequently, there is a homotopy equivalence 
    \[
        e=(\gamma\circ i)\perp\kappa:S^{n-1}\vee\overline{E}\xrightarrow{\simeq}\widehat{E}.
    \] 
    with the property that there is a homotopy commutative diagram
    \begin{equation}\label{dgm:e_p2}
        \begin{tikzcd}[row sep=3em, column sep = 3em]
                S^{n-1}\vee\overline{E} \arrow[d, "p_2"] \arrow[r, "e"] & \widehat{E} \arrow[d] & \\
                \overline{E} \arrow[r, equal] & \overline{E} 
        \end{tikzcd} 
    \end{equation} 
    where $p_{2}$ is the pinch map to the second wedge summand.
\end{prop}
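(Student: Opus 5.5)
The plan is to show first that $\kappa$ is a homotopy section of the map $\widehat{E}\rightarrow\overline{E}$ in the middle column of Lemma~\ref{lem:Ehatproperty4}. Then we use the standard splitting of a homotopy cofibration with a section to identify $\widehat{E}$ as a wedge.

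\emph{Step 1: $\kappa$ is a section.} Write $h:\widehat{E}\rightarrow\overline{E}$ for the map in the cofibration $S^{n-1}\xrightarrow{\gamma\circ i}\widehat{E}\xrightarrow{h}\overline{E}$. By Lemma~\ref{lem:Ehatproperty4}, $\iota\circ h$ is homotopic to the map $\widehat{E}\rightarrow E$ from~(\ref{dgm:null}). By Lemma~\ref{lem:Ehatproperty1}, $\kappa$ lifts $\iota$ through this map. Hence $\iota\circ h\circ\kappa\simeq\iota$. Since $\overline{E}$ has dimension $n+2k-2$ (or less) and $\iota:\overline{E}\rightarrow E$ is the inclusion of the $(n+2k-2)$-skeleton of an $(n+2k-1)$-dimensional complex, $\iota$ is $(n+2k-2)$-connected. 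So $\iota_{\ast}:[\overline{E},\overline{E}]\rightarrow[\overline{E},E]$ is injective whenever $\dim\overline{E}<n+2k-2$. In the borderline case it is at least injective on homology in degrees $\leq n+2k-2$. The safest route is to work in homology: $\iota_{\ast}$ is injective on $H_{\ast}(\overline{E})$, since $H_{n+2k-2}(E)=0$ up to the top cell and $\overline{E}$ is obtained by removing the top cell, so $\iota_{\ast}$ is an isomorphism below the top dimension. Therefore $(h\circ\kappa)_{\ast}$ is the identity, so $h\circ\kappa$ is a self-map of the simply-connected space $\overline{E}$ inducing an isomorphism in homology. It is therefore a homotopy equivalence $\theta$ by Whitehead's Theorem. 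Replacing $\kappa$ by $\kappa\circ\theta^{-1}$ gives an honest homotopy section, still a lift of $\iota$ up to the self-equivalence. This adjustment is the step needing care, since the statement names $\kappa$ itself. I would try to argue directly that $h\circ\kappa\simeq 1$ by cellular approximation. Here $\overline{E}$ is a CW complex of dimension $\leq n+2k-2$, and $\iota$ induces a bijection $[\overline{E},\overline{E}]\rightarrow[\overline{E},E]$ when $E$ is obtained by attaching a single $(n+2k-1)$-cell. This holds because the pair $(E,\overline{E})$ is $(n+2k-2)$-connected, which gives surjectivity on $[\overline{E},-]$. Injectivity requires homotopies of dimension $n+2k-1$, which can be pushed off the cell by cellular approximation applied to $\overline{E}\times I$, of dimension $n+2k-1$. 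This is the step I expect to be the main obstacle. Failing that, the self-equivalence fix suffices for the stated consequence.

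\emph{Step 2: splitting.} With $h\circ\kappa\simeq 1$, consider $e=(\gamma\circ i)\perp\kappa:S^{n-1}\vee\overline{E}\rightarrow\widehat{E}$. Then $h\circ e\simeq(h\circ\gamma\circ i)\perp(h\circ\kappa)\simeq\ast\perp 1=p_{2}$, since $h\circ\gamma\circ i$ is null homotopic as consecutive maps in a cofibration. This gives diagram~(\ref{dgm:e_p2}). For the equivalence, use the long exact homology sequence of the cofibration. Because $h_{\ast}$ is split surjective via $\kappa_{\ast}$, the sequence breaks into split short exact sequences $0\rightarrow \operatorname{im}(\gamma\circ i)_{\ast}\rightarrow H_{\ast}(\widehat{E})\rightarrow H_{\ast}(\overline{E})\rightarrow 0$. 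The connecting map $H_{\ast}(\overline{E})\rightarrow H_{\ast-1}(S^{n-1})$ vanishes by the splitting, so $(\gamma\circ i)_{\ast}$ is injective. Hence $e_{\ast}$ is an isomorphism.

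Finally, the spaces are simply-connected: $\widehat{E}$ is $(2k-1)$-connected by Lemma~\ref{lem:dim+conn}, and in the case $k=1$ simple connectivity still holds there. Since $n\geq4$, Whitehead's Theorem then shows $e$ is a homotopy equivalence. The hypothesis that $\overline{M}$ is not contractible enters through Lemmas~\ref{lem:Ehatproperty2} and~\ref{lem:dim+conn}, which underlie Lemma~\ref{lem:Ehatproperty4} and the connectivity used.
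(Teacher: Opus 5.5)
Your argument follows the paper's route. You lift $\iota$ to $\kappa$ via Lemma~\ref{lem:Ehatproperty1}, use the square in Lemma~\ref{lem:Ehatproperty4} to get $\iota\circ h\circ\kappa\simeq\iota$, and conclude from homology and Whitehead's Theorem that $h\circ\kappa$ is a homotopy equivalence. You then split; your homology argument for $e_\ast$ just spells out what the paper calls immediate. You are also right that this only shows $h\circ\kappa$ is a self-equivalence. The paper's proof likewise stops there before calling $\kappa$ a section, so something more is needed to get~(\ref{dgm:e_p2}) with $p_2$ on the nose.

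The argument you propose for that extra step in the ``borderline case'' is false, though. If $\overline{E}$ had $(n+2k-2)$-cells, then $\overline{E}\times I$ would have $(n+2k-1)$-cells, and cellular approximation does nothing to keep a homotopy off the top cell of $E$. Indeed $\iota_\ast\colon[\overline{E},\overline{E}]\rightarrow[\overline{E},E]$ need not be injective in that situation: for $S^m\subset D^{m+1}=S^m\cup e^{m+1}$, the map $[S^m,S^m]\rightarrow[S^m,D^{m+1}]$ is $\mathbb{Z}\rightarrow 0$.

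What you are missing is that the borderline case does not occur. As the paper observes, Poincar\'{e} Duality and the Universal Coefficient Theorem give $H_{n+2k-2}(E)=0$ and $H_{n+2k-3}(E)$ torsion-free. The latter holds because $\mathrm{Ext}(H_{n+2k-3}(E),\mathbb{Z})$ is a summand of $H^{n+2k-2}(E)\cong H_1(E)=0$. So $\overline{E}$ may be taken to be $(n+2k-3)$-dimensional, and your own first observation applies. Since $(E,\overline{E})$ is $(n+2k-2)$-connected, $\iota_\ast\colon[\overline{E},\overline{E}]\rightarrow[\overline{E},E]$ is injective, so $h\circ\kappa\simeq 1$ directly.

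This is better than the fallback $\kappa\circ\theta^{-1}$. That replacement would turn $\widehat{g}\circ\kappa\simeq\overline{g}$, which is used later (e.g.\ in Lemma~\ref{lem:Sphere-->M-bar}), into $\overline{g}\circ\theta^{-1}$, which is harmless only after re-choosing $\overline{g}$. The same dimension fact also makes your somewhat vague claim that $\iota_\ast$ is injective in homology precise.
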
 

\begin{proof} 
By Lemma \ref{lem:Ehatproperty1}, there is a lift 
\begin{equation*}
    \begin{tikzcd}[row sep=3em, column sep = 3em]
        & \widehat{E} \arrow[d] \\
        \overline{E} \arrow[ur, "\kappa"] \arrow[r, "\iota"] & E
    \end{tikzcd}
\end{equation*}
for some map $\kappa$. On the other hand, letting \(h:\widehat{E}\rightarrow\overline{E}\) be the corresponding vertical map from Lemma~\ref{lem:Ehatproperty4}, then the bottom right square in that lemma implies that there is a homotopy commutative diagram 
\begin{equation*}
    \begin{tikzcd}[row sep=3em, column sep = 3em]
        \widehat{E} \arrow[d] \arrow[r, "h"] & \overline{E} \\
        E. \arrow[from=ur, "\iota"] &
    \end{tikzcd}
\end{equation*}

Consider the composite \(\varepsilon:\overline{E} \xrightarrow{\kappa} \widehat{E} \xrightarrow{h} \overline{E}\); we will to show that it is a homotopy equivalence. Juxtaposing the previous two diagrams along the vertical arrow \(\widehat{E}\rightarrow E\) implies that $\iota\circ\varepsilon\simeq \iota$. Since $E$ is a simply-connected Poincar\'{e} Duality complex of dimension $n+2k-1$, Poincar\'{e} Duality implies that $H^{n+2k-2}(E)\cong 0$. The Universal Coefficient Theorem then implies that $H_{n+2k-2}(E)\cong 0$, and therefore that~$\overline{E}$ is at most $(n+2k-3)$-dimensional. Further, $\iota$ therefore induces an isomorphism in homology in degrees up to and including \(n+2k-3\). Since $\iota_{\ast}\circ\varepsilon_{\ast}=\iota_{\ast}$ it must be the case that~$\varepsilon_{\ast}$ is also an isomorphism in degrees up to and including \(n+2k-3\). As $\overline{E}$ has dimension at most $n+2k-3$, this implies that~$\varepsilon_{\ast}$ is an isomorphism in all degrees. As $E$ is at least simply-connected (by Lemma \ref{lem:dim+conn}(i)), so is $\overline{E}$, and therefore \(\varepsilon\) is a homotopy equivalence by Whitehead's Theorem. Thus, in the context of homotopy cofibration of the statement of the Proposition, the map $h$ has $\kappa$ as a homotopy section. The existence of the asserted homotopy equivalence follows immediately, as does the homotopy commutative square (\ref{dgm:e_p2}).
\end{proof}

\section{Properties of the map \(\gamma\) via assumptions on \(H_{2k}(M)\)} 
\label{sec:gamma via H_2k} 

The ultimate goal is to identify conditions under which the space $E_N$ can be identified as a connected sum of $E$ with a gyration. Proposition~\ref{prop:betagamma} relates $E_N$ to $\widehat{E}$ by a homotopy pullback that involves a map 
\[
    \gamma:S^{n-1}\amsrtimes S^{2k-1} \longrightarrow \widehat{E}
\]
and in Proposition~\ref{prop:Ehatproperty3} it was shown that there is a map \(\kappa:\overline{E} \rightarrow \widehat{E}\) with a left homotopy inverse. The purpose of this section is to relate these by proving Proposition~\ref{prop:varthetagamma}, which gives a condition for when the composite  
\[
    S^{n+2k-2}\xlongrightarrow{j} S^{n-1}\amsrtimes S^{2k-1}\xlongrightarrow{\gamma}\widehat{E}
\]
factors as \(S^{n+2k-2}\xrightarrow{f_{E}}\overline{E}\xrightarrow{\kappa}\widehat{E}\) up to a certain self-equivalence of $S^{n-1}\amsrtimes S^{2k-1}$. This factorisation is crucial for the proof of the Main Theorem in the next section. 

By assumption, $2k<n$, so Lemma~\ref{lem:spherehalfsmash} implies that there is 
a co-$H$-map \(j:S^{n+2k-2} \rightarrow S^{n-1}\amsrtimes S^{2k-1}\) 
with a left homotopy inverse (note that the co-$H$-property will not be needed until Proposition~\ref{prop:varthetagamma}). Consider the composite 
\[
    \vartheta:S^{n+2k-2}\xrightarrow{j} S^{n-1}\amsrtimes S^{2k-1} \xrightarrow{\gamma}\widehat{E} 
     \xrightarrow{e^{-1}}S^{n-1}\vee\overline{E} 
\] 
where $e$ is the homotopy equivalence in Proposition~\ref{prop:Ehatproperty3}. By~\cite{ganea65}, there is 
a homotopy fibration 
\[
    \Omega S^{n-1}\ast\Omega\overline{E} \longrightarrow S^{n-1}\vee\overline{E} \longrightarrow S^{n-1}\times\overline{E}
\] 
where the right map is the inclusion of the wedge into the product, and this homotopy fibration splits after looping to give a homotopy equivalence $\Omega(S^{n-1}\vee\overline{E})\simeq\Omega S^{n-1}\times\Omega\overline{E}\times\Omega(\Omega S^{n-1}\ast\Omega\overline{E})$. Thus when considering \(\vartheta\) as a homotopy class in \(\pi_{n+2k-2}(S^{n-1}\vee\overline{E})\) this splitting gives  $\vartheta\simeq\vartheta_{1}+\vartheta_{2}+\vartheta_{3}$ where   
\begin{align*} 
    \vartheta_{1}&: S^{n+2k-2}\xrightarrow{\vartheta} S^{n-1}\vee\overline{E} \xrightarrow{p_{1}} S^{n-1}\xrightarrow{i_{1}} S^{n-1}\vee\overline{E} \\ 
    \vartheta_{2}&: S^{n+2k-2}\xrightarrow{\vartheta} S^{n-1}\vee\overline{E} \xrightarrow{p_{2}} \overline{E}\xrightarrow{i_{2}} S^{n-1}\vee\overline{E} \\ 
    \vartheta_{3}&: S^{n+2k-2}\xrightarrow{\theta} \Omega S^{n-1}\ast\Omega \overline{E}\rightarrow S^{n-1}\vee\overline{E}
\end{align*} 
for $p_{1}$ and $p_{2}$ the pinch maps to the first and second wedge summands respectively, 
$i_{1}$ and~$i_{2}$ the inclusions of the first and second wedge summands respectively, and some map $\theta$. 

The maps \(\vartheta_1\) and \(\vartheta_2\) can be identified straightaway. Let $\pi_{i}^{S}$ denote the $i^{th}$-stable homotopy group of spheres.  

\begin{lem} 
    \label{lem:vartheta_decomp1} 
    There are homotopies: 
    \begin{itemize} 
        \item[(i)] $\vartheta_{1}\simeq i_{1}\circ\delta$ where $\delta$ represents a torsion class in \(\pi^S_{2k-1}\); 
        \item[(ii)] $\vartheta_{2}\simeq i_{2}\circ f_{E}$. 
    \end{itemize}  
    Thus $\vartheta\simeq i_{1}\circ\delta+i_{2}\circ f_{E}+\vartheta_{3}$. 
\end{lem}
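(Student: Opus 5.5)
Both homotopies come from composing $\vartheta=e^{-1}\circ\gamma\circ j$ with the two pinch maps $p_1$ and $p_2$. Each composite can be read off from results already established: part (i) from Proposition~\ref{prop:betagamma}(i), and part (ii) from Lemma~\ref{lem:Ehatproperty4} together with the square~(\ref{dgm:e_p2}). The final formula $\vartheta\simeq i_{1}\circ\delta+i_{2}\circ f_{E}+\vartheta_{3}$ then follows by substituting (i) and (ii) into the decomposition $\vartheta\simeq\vartheta_1+\vartheta_2+\vartheta_3$ obtained from the Ganea splitting.

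\emph{Part (ii).} The map $p_2\circ e^{-1}\colon\widehat{E}\to\overline{E}$ is homotopic to the map $h\colon\widehat{E}\to\overline{E}$ of Lemma~\ref{lem:Ehatproperty4}, by the commutativity of~(\ref{dgm:e_p2}). Therefore
\[
p_2\circ\vartheta\simeq h\circ\gamma\circ j .
\]
The lower left square of Lemma~\ref{lem:Ehatproperty4} gives $h\circ\gamma\simeq f_E\circ q$. By Lemma~\ref{lem:spherehalfsmash}(ii), $q\circ j\simeq 1$. Combining these, $p_2\circ\vartheta\simeq f_E$, and hence $\vartheta_2=i_2\circ p_2\circ\vartheta\simeq i_2\circ f_E$.

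\emph{Part (i).} Define $\delta=p_1\circ\vartheta\colon S^{n+2k-2}\to S^{n-1}$. By construction $\vartheta_1=i_1\circ\delta$, so only two properties of $\delta$ need checking.

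First, $\delta$ is stable and lies in the metastable range. Since $n\ge 4k>2k$, we have $n+2k-2<2(n-1)-1$, so $\pi_{n+2k-2}(S^{n-1})\cong\pi^S_{2k-1}$ by Freudenthal. Thus $\delta$ represents a class in $\pi^S_{2k-1}$.

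Second, $\delta$ is torsion. For $k\ge 1$ the group $\pi^S_{2k-1}$ is finite (Serre), since $\pi^S_0=\mathbb{Z}$ is the only infinite stable stem. Hence every element of $\pi^S_{2k-1}$ is torsion and no further argument is needed.

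It is worth asking whether $\delta$ can in fact be identified more precisely. The composite $\widehat{g}\circ e\circ i_1\simeq\widehat{g}\circ\gamma\circ i\simeq f_M$ holds by Proposition~\ref{prop:betagamma}(i). Nothing forces $\delta$ to vanish, which is consistent with the twisting class $t_\delta$ appearing in the Main Theorem. No further identification is required here.

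The only step requiring care is that the splitting $\vartheta\simeq\vartheta_1+\vartheta_2+\vartheta_3$ is compatible with the projections $p_1$ and $p_2$. That is, $p_1$ must kill $\vartheta_2$ and $\vartheta_3$, and $p_2$ must kill $\vartheta_1$ and $\vartheta_3$. This holds for two reasons:
\begin{itemize}
\item the composite $\Omega S^{n-1}\ast\Omega\overline{E}\to S^{n-1}\vee\overline{E}\to S^{n-1}\times\overline{E}$ is null homotopic, so each $p_i$ kills $\vartheta_3$;
\item $p_1\circ i_2$ and $p_2\circ i_1$ are constant, so $p_1$ kills $\vartheta_2$ and $p_2$ kills $\vartheta_1$.
\end{itemize}
Consequently the components $\vartheta_1$ and $\vartheta_2$ are determined exactly by $p_1\circ\vartheta$ and $p_2\circ\vartheta$, as used above.
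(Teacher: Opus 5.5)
Your proof is correct and follows the paper's argument. Part (ii) is the same diagram chase through $q\circ j\simeq 1$, Lemma~\ref{lem:Ehatproperty4} and~(\ref{dgm:e_p2}). In part (i) you only reverse the order of the paper's two observations: you invoke Freudenthal stability first and then finiteness of $\pi^S_{2k-1}$, whereas the paper uses Serre's unstable finiteness and excludes the exceptional degree $2n-3$ via $n\geq 4k$. The appeal to Proposition~\ref{prop:betagamma}(i) in your overview and the aside on identifying $\delta$ are never used and can be dropped.
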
 

\begin{proof} 
    \textit{Part (i):} Observe that \(\delta=p_1\circ\vartheta:S^{n+2k-2} \rightarrow S^{n-1}\) represents a class in $\pi_{n+2k-2}(S^{n-1})$. Furthermore, \(\delta\) must be a torsion class, for if $n-1$ is odd then $\pi_{m}(S^{n-1})$ is torsion for all $m>n-1$, while if $n-1$ is even then $\pi_{m}(S^{n-1})$ is torsion for all $m>n-1$ except for $m=2n-3$. In the latter case $n+2k-2=2n-3$ implies $n=2k+1$. But this cannot occur since $M$ is $(2k-1)$-connected with $H_{2k}(M)$ nontrivial, so Poincar\'{e} Duality implies that $n\geq 4k$. This inequality also implies that \(\pi_{n+2k-2}(S^{n-1})\) is isomorphic to the stable homotopy group \(\pi^S_{2k-1}\), which completes the proof of (i). 

    \textit{Part (ii):} Consider the diagram 
    \begin{equation}
        \begin{tikzcd}[row sep=3em, column sep = 3em]
                S^{n+2k-2} \arrow[r, "j"] \arrow[dr, equal] & S^{n-1} \amsrtimes S^{2k-1} \arrow[d, "q"] \arrow[r, "\gamma"] & \widehat{E} \arrow[d] \arrow[r, "e^{-1}"] & S^{n-1}\vee\overline{E} \arrow[d, "p_2"] \\
                & S^{n+2k-2} \arrow[r, "f_E"] & \overline{E} \arrow[r, equal] & \overline{E}.
        \end{tikzcd} 
    \end{equation} 
    The left triangle homotopy commutes since $q$ is a left homotopy inverse for $j$, the middle square homotopy commutes by Lemma \ref{lem:Ehatproperty4}, and the right square homotopy commutes by~(\ref{dgm:e_p2}). The top row is the definition of $\vartheta$, and thus $p_{2}\circ\vartheta\simeq f_{E}$. By definition, $\vartheta_{2}=i_{2}\circ p_{2}\circ\vartheta$, so we obtain $\vartheta_{2}\simeq i_{2}\circ f_{E}$.
\end{proof} 

The identification of $\vartheta_{3}$ will break into two cases, depending on the rank of $H_{2k}(M)$. By assumption, the homotopy fibration \(S^{2k-1}\xrightarrow{\alpha} E \xrightarrow{g} M\) has $\alpha$ null homotopic, so there is a homotopy equivalence $\Omega M\simeq S^{2k-1}\times\Omega E$, implying in particular that 
\begin{equation} \label{eqn:pi2kEM} 
\pi_{2k}(M)\cong\pi_{2k-1}(S^{2k-1})\oplus\pi_{2k}(E). 
\end{equation} 
By hypothesis, $M$ is $(2k-1)$-connected and, by Lemma~\ref{lem:dim+conn}, $E$ is also $(2k-1)$-connected. Therefore the Hurewicz isomorphism implies that 
\begin{equation} \label{eqn:H2kEM} 
H_{2k}(M)\cong H_{2k-1}(S^{2k-1})\oplus H_{2k}(E). 
\end{equation} 
At this point we make the additional assumption that $H_{2k}(M))\cong\oplus_{i=1}^{r}\mathbb{Z}$ for some $r\geq 1$. There are two cases, $r=1$ and $r>1$.  

\subsection{Identifying \(\vartheta_{3}\): the \(r=1\) case}\label{subsec:theta3_r=1}
\hfill
\vspace{5pt}

\begin{lem} 
    \label{lem:M1gen} 
    Suppose that $H_{2k}(M)\cong\mathbb{Z}$. Then $\vartheta_{3}\simeq\ast$. 
\end{lem}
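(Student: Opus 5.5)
When $r=1$ the lemma should follow from connectivity alone. I would show that $\overline{E}$ is highly connected, so that $\Omega S^{n-1}\ast\Omega\overline{E}$ is $(n+2k-2)$-connected and the map $\theta\colon S^{n+2k-2}\to\Omega S^{n-1}\ast\Omega\overline{E}$ defining $\vartheta_{3}$ is null homotopic.

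\emph{Step 1: connectivity of $E$.} By~(\ref{eqn:H2kEM}), $H_{2k}(M)\cong\mathbb{Z}\oplus H_{2k}(E)$. Since $H_{2k}(M)\cong\mathbb{Z}$, this forces $H_{2k}(E)\cong 0$. By Lemma~\ref{lem:dim+conn}(i), $E$ is $(2k-1)$-connected, so Hurewicz then gives that $E$ is $2k$-connected. In fact more is available. From $\Omega M\simeq S^{2k-1}\times\Omega E$, the homotopy of $E$ in low degrees is that of $M$ with the $S^{2k}$-type class removed. By Poincar\'e Duality the next nonzero homology of $M$ above degree $2k$ and below degree $n-2k$ is unconstrained, so I would only claim that $E$ is $2k$-connected. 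The key count is then the following: $\overline{E}$ is $c$-connected with $c\geq 2k$, since it shares its low skeleta with $E$, and $S^{n-1}$ is $(n-2)$-connected.

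\emph{Step 2: connectivity of the join.} For spaces $A$ that is $a$-connected and $B$ that is $b$-connected, $A\ast B\simeq\Sigma A\wedge B$ is $(a+b+2)$-connected. Here $\Omega S^{n-1}$ is $(n-3)$-connected and $\Omega\overline{E}$ is $(2k-1)$-connected. Hence $\Omega S^{n-1}\ast\Omega\overline{E}$ is $(n-3+2k-1+2)=(n+2k-2)$-connected, so $\pi_{n+2k-2}$ of it vanishes. Therefore $\theta\simeq\ast$, and so $\vartheta_{3}\simeq\ast$.

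\emph{Main obstacle.} The delicate point is Step 1: the vanishing of $H_{2k}(E)$ must be justified carefully. This requires that the splitting $\Omega M\simeq S^{2k-1}\times\Omega E$ induces the decomposition~(\ref{eqn:pi2kEM}) compatibly with Hurewicz, so that the $\mathbb{Z}$ coming from $S^{2k-1}$ accounts for all of $H_{2k}(M)$. Once $\overline{E}$ is known to be $2k$-connected, the rest is the connectivity count above. I would double-check that count at its boundary: it gives exactly $(n+2k-2)$-connectedness, so it only just suffices, and any loss of one in the connectivity of $\overline{E}$ would break the argument. If this happened, I would fall back on analysing $\theta$ on the bottom cell $\Sigma S^{n-3}\wedge S^{2k-1}$ of the join.
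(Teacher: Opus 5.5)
Your proposal is correct and is essentially the paper's proof: (\ref{eqn:H2kEM}) forces $H_{2k}(E)=0$, so $E$ and hence $\overline{E}$ are $2k$-connected, making $\Omega S^{n-1}\ast\Omega\overline{E}$ $(n+2k-2)$-connected, so that $\theta$, and therefore $\vartheta_{3}$, is null homotopic. The obstacle you flag is not a real one: an abstract isomorphism $\mathbb{Z}\cong\mathbb{Z}\oplus H_{2k}(E)$ already forces $H_{2k}(E)=0$, so no compatibility with the Hurewicz map is needed.
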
 

\begin{proof} 
    The hypothesis that $H_{2k}(M)\cong\mathbb{Z}$ implies by~(\ref{eqn:H2kEM}) that $H_{2k}(E)\cong 0$. But as $E$ is at least $(2k-1)$-connected by Lemma~\ref{lem:dim+conn}, this implies that $E$ is at least $2k$-connected. As $\overline{E}$ is the $(n+2k-2)$-skeleton of~$E$, it too is at least $2k$-connected. Therefore $\Omega S^{n-1}\ast\Omega\overline{E}$ is at least $(n+2k-2)$-connected. By definition, $\vartheta_{3}$ has domain $S^{n+2k-2}$ and factors through $\Omega S^{n-1} \ast\Omega\overline{E}$. Hence $\vartheta_{3}$ is null homotopic.
\end{proof} 

\begin{prop}\label{prop:M1genid} 
    Suppose that $H_{2k}(M)\cong\mathbb{Z}$. Then, localized away from primes dividing $\pi^S_{2k-1}$, there is a homotopy $\vartheta\simeq i_{2}\circ f_{E}$. 
\end{prop}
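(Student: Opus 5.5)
The plan is to combine the decomposition of Lemma~\ref{lem:vartheta_decomp1} with the vanishing of Lemma~\ref{lem:M1gen}, and then kill the remaining torsion term by localisation. By Lemma~\ref{lem:vartheta_decomp1}, integrally there is a homotopy
\[
\vartheta\simeq i_{1}\circ\delta+i_{2}\circ f_{E}+\vartheta_{3},
\]
where $\delta\colon S^{n+2k-2}\rightarrow S^{n-1}$ represents a torsion class in $\pi_{n+2k-2}(S^{n-1})\cong\pi^{S}_{2k-1}$. Since $H_{2k}(M)\cong\mathbb{Z}$, Lemma~\ref{lem:M1gen} gives $\vartheta_{3}\simeq\ast$. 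Hence, before any localisation, $\vartheta\simeq i_{1}\circ\delta+i_{2}\circ f_{E}$.

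Next I localise away from all primes dividing the order of $\pi^{S}_{2k-1}$, which is a finite group since $2k-1\geq 1$. The decomposition above comes from the splitting of $\Omega(S^{n-1}\vee\overline{E})$ and the identification of the components $p_{1}\circ\vartheta$ and $p_{2}\circ\vartheta$. Both are natural with respect to localisation, so the same formula holds for the localised maps. After localisation, $\pi_{n+2k-2}(S^{n-1}_{(P)})\cong\pi^{S}_{2k-1}\otimes\mathbb{Z}_{(P)}=0$, using the stable range $n\geq 4k$ noted in the proof of Lemma~\ref{lem:vartheta_decomp1}. Therefore $\delta$ becomes null homotopic, and so does $i_{1}\circ\delta$. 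Because the sum is taken in the group $\pi_{n+2k-2}(S^{n-1}\vee\overline{E})$, adding a null class changes nothing, and we conclude $\vartheta\simeq i_{2}\circ f_{E}$ locally.

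The only point needing care is that localisation commutes with the constructions used. The spaces involved are simply-connected, and $S^{n-1}\vee\overline{E}$ localises as the wedge of the localisations. The localised homotopy group is the localisation of the integral group, so $p_{1}\circ\vartheta$ localises to a class in $\pi^{S}_{2k-1}\otimes\mathbb{Z}_{(P)}=0$. I expect this bookkeeping to be the only mild obstacle; the substantive input has already been supplied by Lemmas~\ref{lem:vartheta_decomp1} and~\ref{lem:M1gen}.
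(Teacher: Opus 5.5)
Your proposal is correct and follows the paper's proof: decompose $\vartheta\simeq i_{1}\circ\delta+i_{2}\circ f_{E}+\vartheta_{3}$ via Lemma~\ref{lem:vartheta_decomp1}, kill $\vartheta_{3}$ by Lemma~\ref{lem:M1gen}, and kill the torsion class $\delta$ by the localisation. Your remarks on compatibility with localisation and on the stable range are extra bookkeeping the paper leaves implicit.
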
 

\begin{proof} 
    By Lemma~\ref{lem:vartheta_decomp1}, $\vartheta\simeq i_{1}\circ\delta+ i_{2}\circ f_{E}+\vartheta_{3}$, where $\delta$ represents a torsion class in $\pi_{2k-1}^{S}$. By the localisation hypothesis, $\delta$ is null homotopic, and by Lemma~\ref{lem:M1gen}, $\vartheta_{3}$ is null homotopic. Thus $\vartheta \simeq i_{2}\circ f_{E}$. 
\end{proof} 

\subsection{Identifying \(\vartheta_{3}\): the \(r>1\) case} \label{subsec:theta3_r>1}
\hfill
\vspace{5pt} 

This case is much more involved. By assumption, $H_{2k}(M)\cong\oplus_{i=1}^{r}\mathbb{Z}$ 
with $r>1$, so~(\ref{eqn:H2kEM}) implies that  $H_{2k}(E)\cong\oplus_{i=2}^{r}\mathbb{Z}$. Let \(\{x'_{2},\ldots,x'_{r}\}\) be a basis of \(H_{2k}(E)\). As the skeletal inclusion \(\overline{E} \rightarrow E\) induces an isomorphism on $H_{2k}$, we will also use $\{x'_{2},\ldots,x'_{r}\}$ to denote a basis of $H_{2k}(\overline{E})$. As $\overline{E}$ is $(2k-1)$-connected, the Hurewicz isomorphism implies that $\pi_{2k}(\overline{E})\cong\oplus_{i=2}^{r}\mathbb{Z}$ and, for $2\leq i\leq r$, there are maps 
\[
    s'_{i}:S^{2k}\rightarrow\overline{E}
\] 
such that $(s'_{i})_{\ast}$ has image $x'_{i}$. 

By definition, the map $\vartheta_{3}$ factored as a composite \(S^{n+2k-2} \xrightarrow{\theta} \Omega S^{n-1}\ast\Omega\overline{E} \rightarrow S^{n-1}\vee\overline{E}\). Since $\overline{E}$ is $(2k-1)$-connected and 
$\pi_{2k}(\overline{E})\cong\oplus_{i=2}^{r}\mathbb{Z}$, it follows that $\Omega S^{n-1}\ast\Omega\overline{E}$ is $(n+2k-3)$-connected and $\pi_{n+2k-2}(\Omega S^{n-1}\ast\Omega\overline{E})\cong\oplus_{i=2}^{r}\mathbb{Z}$. Thus $\theta$ is homotopic to a linear combination of the $r-1$ maps  that include the bottom cells into $\Omega S^{n-1}\ast\Omega\overline{E}$. By~\cite{ganea65}, the map \(\Omega S^{n-1}\ast\Omega\overline{E} \rightarrow S^{n-1}\vee\overline{E}\)  
is homotopic to the Whitehead product of the composites 
\[
    \Sigma\Omega S^{n-1} \xrightarrow{ev} S^{n-1} \xrightarrow{i_{1}}S^{n-1}\vee\overline{E}\text{\; and \;} \Sigma\Omega\overline{E} \xrightarrow{ev} \overline{E} \xrightarrow{i_{2}} S^{n-1}\vee\overline{E}
\]
where the maps labelled $ev$ are the canonical evaluation maps. Restricting to the inclusions of the bottom cells in $\Omega S^{n-1}\ast\Omega\overline{E}$ therefore gives a sum $\sum_{i=2}^{r} [i_{1},\overline{s}_i]$. Thus  
\[
    \vartheta_{3}=\sum_{i=2}^{r} a_{i}\cdot [i_{1},\overline{s}_{i}] 
\] 
for some coefficients $a_{i}\in\mathbb{Z}$. This lets us rewrite Lemma~\ref{lem:vartheta_decomp1} in this case as follows. 

\begin{lem} 
    \label{lem:vartheta_decomp} 
    Suppose that $H_{2k}(M)\cong\oplus_{i=2}^{r}\mathbb{Z}$ for $r>1$. Then there is a homotopy 
    \[
        \vartheta\simeq i_{1}\circ\delta+i_{2}\circ f_{E}+\sum_{i=2}^{r} a_{i}\cdot [i_{1},\overline{s}_{i}]
    \]  
    for some $a_{i}\in\mathbb{Z}$, where $\delta$ represents a torsion class in \(\pi^S_{2k-1}\).~$\qqed$ 
\end{lem}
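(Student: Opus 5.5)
This lemma follows by combining Lemma~\ref{lem:vartheta_decomp1} with the identification of $\vartheta_{3}$ carried out just before the statement. (The hypothesis should be read as $H_{2k}(M)\cong\oplus_{i=1}^{r}\mathbb{Z}$, so that $H_{2k}(E)\cong\oplus_{i=2}^{r}\mathbb{Z}$ by~(\ref{eqn:H2kEM}).) Lemma~\ref{lem:vartheta_decomp1} already gives
\[
\vartheta\simeq i_{1}\circ\delta+i_{2}\circ f_{E}+\vartheta_{3},
\]
with $\delta$ a torsion class in $\pi^{S}_{2k-1}$. So the only task is to show that $\vartheta_{3}\simeq\sum_{i=2}^{r}a_{i}[i_{1},\overline{s}_{i}]$ for some integers $a_{i}$.

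First I will pin down the connectivity of the Ganea fibre and its bottom homotopy group. Since $\overline{E}$ is $(2k-1)$-connected with $H_{2k}(\overline{E})$ free of rank $r-1$, the space $\Omega\overline{E}$ is $(2k-2)$-connected and $H_{2k-1}(\Omega\overline{E})\cong\oplus_{i=2}^{r}\mathbb{Z}$. Similarly $\Omega S^{n-1}$ has its bottom cell in degree $n-2$. Use $A\ast B\simeq\Sigma A\wedge B$ and the Künneth theorem. Then $\Omega S^{n-1}\ast\Omega\overline{E}$ is $(n+2k-3)$-connected, with
\[
H_{n+2k-2}\cong\widetilde{H}_{n-2}(\Omega S^{n-1})\otimes\widetilde{H}_{2k-1}(\Omega\overline{E})\cong\oplus_{i=2}^{r}\mathbb{Z}.
\]
By Hurewicz the same holds for $\pi_{n+2k-2}$, and this group is generated by the bottom cells $S^{n+2k-2}\simeq\Sigma S^{n-2}\wedge S^{2k-1}$. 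Those cells are the joins of the bottom cell of $\Omega S^{n-1}$ with the adjoints $S^{2k-1}\to\Omega\overline{E}$ of the maps $\overline{s}_{i}=s'_{i}$. Hence $\theta\simeq\sum a_{i}\cdot b_{i}$, where $b_{i}$ denotes the inclusion of the $i$-th bottom cell; this uses that $S^{n+2k-2}$ is a suspension.

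Second, I will identify the composite with the Ganea map $\Omega S^{n-1}\ast\Omega\overline{E}\to S^{n-1}\vee\overline{E}$. By~\cite{ganea65} this map is the Whitehead product $[i_{1}\circ ev,\,i_{2}\circ ev]$ of the evaluation maps. By naturality of Whitehead products, restricting to $\Sigma S^{n-2}\wedge S^{2k-1}$ gives
\[
[i_{1}\circ ev\circ\Sigma\iota_{n-2},\; i_{2}\circ ev\circ\Sigma\widetilde{s}_{i}].
\]
Here $ev\circ\Sigma\iota_{n-2}$ is the identity of $S^{n-1}$, and $ev\circ\Sigma\widetilde{s}_{i}=s'_{i}$. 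So the $i$-th bottom cell maps to $[i_{1},\overline{s}_{i}]$, where $\overline{s}_{i}$ is read as $i_{2}\circ s'_{i}$. The composite is then additive in $\theta$, because precomposition with a suspension co-$H$ structure on the source sphere respects sums. This gives $\vartheta_{3}\simeq\sum a_{i}[i_{1},\overline{s}_{i}]$, and substituting into Lemma~\ref{lem:vartheta_decomp1} finishes the proof.

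The main obstacle is the bookkeeping in the second step. One has to check that the Ganea identification, restricted to bottom cells, really gives the Whitehead products $[i_{1},\overline{s}_{i}]$ with no extra sign or twist terms. Signs do not matter here, since they can be absorbed into the $a_{i}$. The other point needing care is the Hurewicz argument that $\pi_{n+2k-2}$ is free on these cells, which requires the $(n+2k-3)$-connectivity. Both reduce to standard facts, so no input is needed beyond the earlier lemmas.
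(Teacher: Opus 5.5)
Your proposal is correct and follows essentially the same route as the paper. The paper's proof is the discussion immediately preceding the lemma: compute the connectivity and bottom homotopy group of $\Omega S^{n-1}\ast\Omega\overline{E}$, write $\theta$ as a combination of the bottom-cell inclusions, and restrict Ganea's Whitehead product $[i_{1}\circ ev,\, i_{2}\circ ev]$ to those cells. Your reading of the hypothesis as $H_{2k}(M)\cong\oplus_{i=1}^{r}\mathbb{Z}$ and of $\overline{s}_{i}$ as $i_{2}\circ s'_{i}$ matches how the paper uses them, and your extra details (K\"unneth, adjoints, naturality of Whitehead products, additivity) just make the paper's sketch explicit.
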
 

We wish to identify the coefficients $a_{i}$. To gain control we will 
compose to $\overline{M}$.

\begin{lem}\label{lem:Sphere-->M-bar}
    There is a homotopy commutative diagram
    \begin{equation*} 
        \begin{tikzcd}[row sep=3em, column sep = 3em] 
            & S^{n+2k-2} \arrow[dl, swap, "\vartheta"] \arrow[d, "j\circ\gamma"] \\ 
            S^{n-1}\vee\overline{E} \arrow[r, "e"] \arrow[dr, swap, "f_M\perp\overline{g}"] & \widehat{E} \arrow[d, "\widehat{g}"] \\
            & \overline{M}.            
        \end{tikzcd} 
    \end{equation*}     
\end{lem}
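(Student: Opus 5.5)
The diagram splits into an upper triangle and a lower triangle, and I will check each separately. Throughout I read the vertical map labelled \(j\circ\gamma\) as the composite \(\gamma\circ j:S^{n+2k-2}\rightarrow\widehat{E}\), which is what is used to define \(\vartheta\).

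\textbf{Upper triangle.} This commutes essentially by definition. We have \(\vartheta=e^{-1}\circ\gamma\circ j\), where \(e^{-1}\) is a homotopy inverse of the homotopy equivalence \(e\) from Proposition~\ref{prop:Ehatproperty3}. Therefore \(e\circ\vartheta\simeq e\circ e^{-1}\circ\gamma\circ j\simeq\gamma\circ j\).

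\textbf{Lower triangle.} It remains to show \(\widehat{g}\circ e\simeq f_M\perp\overline{g}\). Maps out of a wedge are determined up to homotopy by their restrictions to the two wedge summands, so I check each summand.

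On \(S^{n-1}\), the restriction of \(e=(\gamma\circ i)\perp\kappa\) is \(\gamma\circ i\). By Proposition~\ref{prop:betagamma}(i), \(\widehat{g}\circ\gamma\circ i\simeq f_M\).

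On \(\overline{E}\), the restriction of \(e\) is \(\kappa\). By Lemma~\ref{lem:Ehatproperty1}, \(\widehat{g}\circ\kappa\simeq\overline{g}\).

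Combining the two restrictions gives \(\widehat{g}\circ e\simeq f_M\perp\overline{g}\). Composing with the upper triangle then yields \(\widehat{g}\circ\gamma\circ j\simeq(f_M\perp\overline{g})\circ\vartheta\), so the whole diagram homotopy commutes.

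\textbf{Main point of care.} There is no real obstacle. The one thing to confirm is that the \(\kappa\) used in building \(e\) in Proposition~\ref{prop:Ehatproperty3} is the same lift constructed in Lemma~\ref{lem:Ehatproperty1}. It is, since the proof of Proposition~\ref{prop:Ehatproperty3} takes \(\kappa\) directly from that lemma. The homotopy \(\widehat{g}\circ\kappa\simeq\overline{g}\) therefore applies to the \(\kappa\) appearing in \(e\).
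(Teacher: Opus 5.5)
Your proposal is correct and follows the paper's proof: the upper triangle commutes by the definition $\vartheta=e^{-1}\circ\gamma\circ j$, and the lower triangle follows by restricting $e=(\gamma\circ i)\perp\kappa$ to each wedge summand and applying Proposition~\ref{prop:betagamma}(i) and Lemma~\ref{lem:Ehatproperty1}. Your reading of the label $j\circ\gamma$ as the composite $\gamma\circ j$ is also the intended one.
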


\begin{proof} 
    By definition, $\vartheta=e^{-1}\circ\gamma\circ j$, implying that $e\circ\vartheta\simeq\gamma\circ j$, which is written as a homotopy commutative diagram 
    \begin{equation}\label{dgm:varthetalift} 
        \begin{tikzcd}[row sep=3em, column sep = 3em] 
            & S^{n+2k-2} \arrow[dl, swap, "\vartheta"] \arrow[d, "j\circ\gamma"] \\ 
            S^{n-1}\vee\overline{E} \arrow[r, "e"] & \widehat{E}. 
        \end{tikzcd} 
    \end{equation} 
    By Proposition \ref{prop:Ehatproperty3} the homotopy equivalence $e$ is defined as the wedge sum $(\gamma\circ i)\perp\kappa$. By Proposition~\ref{prop:betagamma}, $\widehat{g}\circ\gamma\circ i\simeq f_{M}$ and, by Lemma~\ref{lem:Ehatproperty1}, $\widehat{g}\circ\kappa\simeq\overline{g}$. Therefore 
    \[
        \widehat{g}\circ e\simeq\widehat{g}\circ\left((\gamma\circ i)\perp\kappa\right)\simeq (\widehat{g}\circ\gamma\circ i)\perp (\widehat{g}\circ\kappa)\simeq f_M\perp\overline{g}
    \] 
    so there is a homotopy commutative diagram 
    \begin{equation} \label{dgm:alphadgrm} 
        \begin{tikzcd}[row sep=3em, column sep = 3em] 
            S^{n-1}\vee\overline{E} \arrow[r, "e"] \arrow[dr, swap, "f_M\perp\overline{g}"] & \widehat{E} \arrow[d, "\widehat{g}"] \\
            & \overline{M}.
        \end{tikzcd}
    \end{equation} 
    Juxtaposing (\ref{dgm:varthetalift}) and (\ref{dgm:alphadgrm}) gives the asserted diagram.
\end{proof}

To analyse the diagram in Lemma~\ref{lem:Sphere-->M-bar} we need more detailed information, which starts by relating the bases of $H_{2k}(E)$ and $H_{2k}(M)$ and their duals in cohomology. The map 
\(g:E \rightarrow M\) induces the split injection of $\pi_{2k}(E)$ into $\pi_{2k}(M)$ in~(\ref{eqn:pi2kEM}), and therefore the Hurewicz isomorphims implies that~$g_{\ast}$ induces the split injection of $H_{2k}(E)$ into $H_{2k}(M)$ in~(\ref{eqn:H2kEM}). The cokernel of $g_{\ast}$ in degree $2k$ is therefore isomorphic to $\Z$. If $x'_{1}$ is a generator for this cokernel then $\{x'_{1},g_{\ast}(x'_{2}),\ldots,g_{\ast}(x'_{r})\}$ is a basis for $H_{2k}(M)$. Ambiguously, write $x'_{i}$ for $\overline{g}_{\ast}(x'_{i})$, so that a basis for $H_{2k}(M)$ is $\{x'_{1},\ldots,x'_{r}\}$. As the skeletal inclusion \(\overline{M} \rightarrow M\) induces an isomorphism on $H_{2k}$, we will also use 
$\{x'_{1}, x'_{2},\ldots, x'_{r}\}$ to denote a basis of $H_{2k}(\overline{M})$. For $2\leq i\leq r$, define $s_{i}$ by the composite 
\[
    s_{i}:S^{2k} \xlongrightarrow{s'_{i}} \overline{E} \xlongrightarrow{\overline{g}} \overline{M}.
\] 
Then $(s_{i})_{\ast}$ has Hurewicz image $x'_{i}$. The Hurewicz isomorphism implies that there is also a map 
\[
    s_{1}:S^{2k}\longrightarrow\overline{M}
\] 
such that $(s_{i})_{\ast}$ has Hurewicz image $x'_{1}$. 

Dually, by the Universal Coefficient Theorem, $H^{2k}(\overline{E})\cong H_{2k}(\overline{E})$. For $2\leq i\leq r$, let $x_{i}$ be the cohomological dual of $x'_{i}$. Then $\{x_{2},\ldots,x_{r}\}$ is a basis for $H^{2k}(\overline{E})$. The Universal Coefficient Theorem also implies that $H^{2k}(\overline{M}) \cong H_{2k}(\overline{M})$. For $1\leq i\leq r$, let $x_{i}$ be the cohomological dual of $x'_{i}$. Then $\{x_{1},\ldots,x_{r}\}$ is a basis for $H^{2k}(\overline{M})$. Note that this is compatible with $\overline{g}$ in the sense that $(\overline{g})^{\ast}(x_{i})=x_{i}$ for $2\leq i\leq r$, and as $x'_{1}$ was in the cokernel of $\overline{g}_{\ast}$, we have $x_{1}$ in the kernel of $(\overline{g})^{\ast}$. Succinctly, in degree $2k$ cohomology, $(\overline{g})^{\ast}$ projects $\{x_{1},\ldots,x_{r}\}$ to $\{x_{2},\ldots,x_{r}\}$. Finally, as the skeletal inclusion \(\overline{M} \rightarrow M\) induces an isomorphism on cohomology in degree \(2k\),  ambiguously let $x_{1}\in H^{2k}(M)$ be the class corresponding to $x_{1}\in H^{2k}(\overline{M})$, the context making clear which is meant.

Recall that the sphere $S^{2k-1}$ is rationally an Eilenberg-MacLane space $K(\mathbb{Q},2k-1)$, so it has a rational classifying space $BS^{2k-1}$ whose bottom cell is in dimension \(2k\).   

\begin{prop} 
    \label{rationalbundle} 
    Rationally, the homotopy fibration \(S^{2k-1} \xlongrightarrow{\alpha} E \xlongrightarrow{g} M\) is principal; it is induced by a map \(\chi:M \rightarrow BS^{2k-1}\) that represents the cohomology class \(x_1\). In particular, \(\Omega\chi\) has a right homotopy inverse. 
\end{prop}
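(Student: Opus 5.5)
Rationally $S^{2k-1}\simeq K(\mathbb{Q},2k-1)$, so the natural route is through classifying spaces. Rationally, $BS^{2k-1}\simeq K(\mathbb{Q},2k)$ exists, and the classifying space functor sends every rational fibration with fibre $K(\mathbb{Q},2k-1)$ over a simply-connected base to a principal one. I would prove principality directly from the Serre spectral sequence computation in the proof of Proposition~\ref{prop:skeltoskel}.

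The first step is the transgression. Work with rational coefficients. Since $\alpha\simeq\ast$, Step~1 of the proof of Proposition~\ref{prop:skeltoskel} shows that the fundamental class $x\in H^{2k-1}(S^{2k-1};\mathbb{Q})$ transgresses to a nonzero class $y=d^{2k}(x)\in H^{2k}(M;\mathbb{Q})$. Let $\chi\colon M\to K(\mathbb{Q},2k)\simeq BS^{2k-1}_{\mathbb{Q}}$ represent $y$, and pull back the path-loop fibration to get a principal fibration $S^{2k-1}_{\mathbb{Q}}\to E'\to M_{\mathbb{Q}}$.

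The second step compares this with the given fibration. A rational fibration with fibre $K(\mathbb{Q},2k-1)$ over a simply-connected base is classified by a map to $B\mathrm{aut}_1(K(\mathbb{Q},2k-1))$. Since $\mathrm{aut}_1(K(\mathbb{Q},2k-1))\simeq K(\mathbb{Q},2k-1)$, acting by translation, this target is $K(\mathbb{Q},2k)$. Here one uses that $M$ is simply-connected, so the orientation part $\mathbb{Q}^{\times}$ of $\mathrm{aut}$ is irrelevant. The classifying class is exactly the transgression of the fundamental class. Hence the given fibration is rationally fibre homotopy equivalent to the one induced by $\chi$, and so it is principal.

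The third step is to show that $y=x_1$, up to a unit. Because $g$ is induced from $\chi$, the composite $\chi\circ g$ is null homotopic, so $g^{\ast}(y)=0$. This places $y$ in $\ker g^{\ast}$ in degree $2k$. By the discussion preceding the proposition, this kernel is the line spanned by $x_1$, since $g^{\ast}$ projects $\{x_1,\ldots,x_r\}$ onto $\{x_2,\ldots,x_r\}$; this passes to rational coefficients because all the groups involved are free. So $y=\lambda x_1$ with $\lambda\neq 0$ in $\mathbb{Q}$. Rescaling $\chi$ by the rational self-equivalence $\lambda^{-1}$ of $K(\mathbb{Q},2k)$ gives a map representing $x_1$ that induces the same fibration up to equivalence.

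Finally, for the right homotopy inverse of $\Omega\chi$: the connecting map of the fibration induced by $\chi$ is $\Omega\chi\colon\Omega M\to\Omega BS^{2k-1}\simeq S^{2k-1}$. Since $\alpha\simeq\ast$, the fibration sequence $\Omega M\to S^{2k-1}\xrightarrow{\alpha}E$ shows that the identity of $S^{2k-1}$ lifts through $\Omega M$, which is the required right homotopy inverse. Alternatively, $\chi_{\ast}$ is onto on $\pi_{2k}$ because $x_1$ evaluates to $1$ on $x'_1$, and since $\Omega M$ rationally splits off its bottom sphere, this again gives the section.

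I expect the main obstacle to be the second step: justifying that rational fibrations with fibre $K(\mathbb{Q},2k-1)$ are classified by $K(\mathbb{Q},2k)$, that is, identifying $B\mathrm{aut}_1$. To keep this clean, I would cite the standard rational classification of such fibrations by the transgression. Alternatively, I would use minimal models: the relative Sullivan model is $(A_M\otimes\Lambda x,\,dx=y)$, which is visibly the model of the pullback along $\chi$.
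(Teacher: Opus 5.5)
Your argument is correct, but it reaches principality by a different mechanism from the paper's. You invoke the classification of fibrations with fibre $K(\mathbb{Q},2k-1)$ over a simply-connected base, either via $B\mathrm{aut}_1\simeq K(\mathbb{Q},2k)$ or via the relative Sullivan model with $dx=y$. This makes principality automatic, with classifying class the transgression $y$. You then identify $y$ as a nonzero multiple of $x_1$ because $\ker g^{\ast}=\mathbb{Q}x_1$ in degree $2k$, and you obtain the section of $\Omega\chi$ from $\alpha\simeq\ast$.

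The paper cites no classification result and runs the logic in the opposite order. It takes $\chi$ to represent $x_1$ from the start. It first shows that $\Omega\chi$ has a right homotopy inverse using the spherical class $x_1'$: the adjoint $\widetilde{h}$ of $S^{2k}\xrightarrow{s_1}\overline{M}\to M$ makes $\Omega\chi\circ\widetilde{h}$ a degree one self-map of $S^{2k-1}$. It then uses $g^{\ast}(x_1)=0$ to get $\chi\circ g\simeq\ast$, hence a lift $\lambda\colon E\to E'$ over $M$. Finally it shows $\lambda$ is an equivalence: $\Omega\chi\simeq\lambda'\circ\partial$ forces the fibre map $\lambda'$ to be onto, hence an isomorphism, on $H_{2k-1}$, and the five lemma finishes.

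The paper's route is elementary and self-contained, using only that maps into $K(\mathbb{Q},2k)$ are cohomology classes, lifting, and the five lemma. Yours is more conceptual and shows principality for any such fibration, with $\alpha\simeq\ast$ used only for the section. The price is relying on the classification theorem and on rescaling $\chi$ by a unit.

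Two small points in your last step. First, after the fibre homotopy equivalence and rescaling, the connecting map of the given fibration agrees with $\Omega\chi$ only up to a self-equivalence of $S^{2k-1}$, so your lift of the identity should be composed with its inverse. Second, in your alternative argument the remark that $\Omega M$ splits off its bottom sphere is unnecessary. Surjectivity of $\chi_{\ast}$ on $\pi_{2k}$ already yields the paper's section $\widetilde{h}$, after composing with an inverse rational self-equivalence of $S^{2k-1}$.
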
 

\begin{proof} 
    Start with integral homology and cohomology. First, for skeletal reasons, in degree $2k$ cohomology the map \(g:E \rightarrow M\) behaves exactly as does the map \(\overline{g}: \overline{E} \rightarrow \overline{M}\). So as \(x_{1}\in H^{2k}(\overline{M})\) is in the kernel of \((\overline{g})^{\ast}\), the corresponding element \(x_{1}\in H^{2k}(M)\) is in the kernel of  \(g^\ast\). 
    Second, the element \(x_{1}\in H^{2k}(M)\) is represented by a map \(\chi:M \rightarrow K(\mathbb{Z},2k)\). Its homology dual $x'_{1}\in H_{2k}(M)$ corresponds to $x'_{1}\in H_{2k}(\overline{M})$, so it is the Hurewicz image of the composite 
    \[
        h:S^{2k} \xlongrightarrow{s_{1}} \overline{M} \longrightarrow M.
    \] 
    Consequently, the composite  \(\chi\circ h\) induces an isomorphism in degree~$2k$ homology. 
    
    Now rationalize spaces and maps and take rational homology and cohomology. The element \(x_{1}\) is now represented by a map \(\chi:M \rightarrow BS^{2k-1}\) and the composite 
    \[
        S^{2k} \xlongrightarrow{h} M \xlongrightarrow{\chi} BS^{2k-1}
    \] 
    induces an isomorphism in degree $2k$ homology. Thus, letting \(\widetilde{h}\) denote the adjoint of \(h\), the composite 
    \[
        S^{2k-1} \xlongrightarrow{\widetilde{h}} \Omega M \xlongrightarrow{\Omega\chi} S^{2k-1}
    \] 
    is a degree 1 map in homology and so is a homotopy equivalence. Therefore \(\Omega\chi\) has a right homotopy inverse. 
    
    The map $\chi$ induces a (rational) homotopy fibration sequence 
    \[
        S^{2k-1} \longrightarrow E' \longrightarrow M \xlongrightarrow{\chi} BS^{2k-1}
    \] 
    that defines the space \(E'\). Since \(x_{1}\) is in the kernel of the map \(H^{\ast}(M;\Q) \rightarrow H^{\ast}(E;\Q)\) induced by \(g\), and since \(\chi\) represents \(x_{1}\), the composite \(\chi\circ g\) is null homotopic. Therefore there is a lift 
    \[
        \begin{tikzcd}[row sep=3em, column sep = 3em] 
            & E \arrow[dl, swap, dashed, "\lambda"] \arrow[d, "g"] \\ 
            E' \arrow[r] & M \arrow[r, "\chi"] & BS^{2k-1} 
        \end{tikzcd} 
    \]
    for some map \(\lambda\). Turning this lift into a square and taking homotopy fibres gives a homotopy fibration diagram 
    \begin{equation}\label{EE'dgrm}
        \begin{tikzcd}[row sep=3em, column sep = 3em]
                \Omega M \arrow[d, equal] \arrow[r] & S^{2k-1} \arrow[d, "\lambda'"] \arrow[r] & E \arrow[d, "\lambda"] \arrow[r, "g"] & M \arrow[d, equal] \\
                \Omega M \arrow[r, "\Omega\chi"] & S^{2k-1} \arrow[r] & E' \arrow[r] & M.
        \end{tikzcd}
    \end{equation}
    for some induced map \(\lambda'\) of fibres. The left square in~(\ref{EE'dgrm}) implies that the homotopy equivalence \(\Omega\chi\circ\widetilde{h}\) factors as the composite 
    \[
        S^{2k-1} \xlongrightarrow{\widetilde{h}} \Omega M \longrightarrow S^{2k-1} \xlongrightarrow{\lambda'} S^{2k-1}.
    \]
    Thus \(\lambda'\) must also be an isomorphism in homology and therefore a homotopy equivalence. Hence the Five-Lemma applied to the homotopy fibration diagram~(\ref{EE'dgrm}) implies that \(\lambda\) induces an isomorphism on homotopy groups and is therefore a homotopy equivalence by Whitehead's Theorem. Hence, up to homotopy equivalences, the homotopy fibration \(S^{2k-1} \xrightarrow{\alpha} E \xrightarrow{g} M\) is induced (rationally) by the map \(\chi:M\rightarrow BS^{2k-1}\) representing the cohomology class \(x_{1}\). 
\end{proof} 

The existence of a rational classifying space $BS^{2k-1}$ and a map inducing the homotopy fibration \(g:E \rightarrow M\) implies, by Proposition~\ref{prop:betagamma}~(ii), the existence of the following homotopy commutative diagram. 

\begin{lem} \label{lem:Sphere-->M-bar_rational} 
    Rationally, there is a homotopy commutative diagram 
    \begin{equation*}
        \begin{tikzcd}[row sep=3em, column sep = 6em]
                S^{n-1}\amsrtimes S^{2k-1} \arrow[d, "\simeq"] \arrow[r, "\gamma"] & \widehat{E} \arrow[d, "\widehat{g}"] \\
                S^{n-1}\vee S^{n+2k-2} \arrow[r, "f_M\perp {[f_M,s_{1}]}"] & \overline{M}.
        \end{tikzcd}
    \end{equation*}
\end{lem}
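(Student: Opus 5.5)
This follows by applying Proposition~\ref{prop:betagamma}~(ii) in the rational setting. The work is to check its hypotheses and to identify the map $s$ there with $s_{1}$. Throughout we rationalise all spaces and maps.

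\emph{Step 1: the classifying map.} By Proposition~\ref{rationalbundle}, the homotopy fibration $S^{2k-1}\xrightarrow{\alpha}E\xrightarrow{g}M$ is, up to homotopy equivalence, induced by a map $\chi\colon M\to BS^{2k-1}$ representing $x_{1}$, and $\Omega\chi$ has a right homotopy inverse. So the hypotheses of Proposition~\ref{prop:betagamma}~(ii) hold with $\zeta=\chi$. We take the fibration in~(\ref{dgm:null}) to be the one induced by $\chi$; this changes $\widehat{E}$ and $E_{N}$ only up to compatible homotopy equivalence. Proposition~\ref{prop:betagamma}~(ii) then gives a choice of $\gamma$, still satisfying part~(i), for which $\widehat{g}\circ\gamma$ corresponds, under $S^{n-1}\amsrtimes S^{2k-1}\simeq S^{n-1}\vee S^{n+2k-2}$, to $f_{M}\perp[f_{M},s]$. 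Here $s=ev\circ\widehat{s}$ and $\widehat{s}$ is a right homotopy inverse of $\Omega\overline{M}\to\Omega M\xrightarrow{\Omega\chi}S^{2k-1}$.

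\emph{Step 2: identifying $s$ with $s_{1}$.} Let $\widetilde{s}_{1}\colon S^{2k-1}\to\Omega\overline{M}$ be the adjoint of $s_{1}$. In the proof of Proposition~\ref{rationalbundle} we saw that the composite $S^{2k-1}\xrightarrow{\widetilde{s}_{1}}\Omega\overline{M}\to\Omega M\xrightarrow{\Omega\chi}S^{2k-1}$ is $\Omega\chi\circ\widetilde{h}$, a degree one map in homology, hence a homotopy equivalence $u$. So $\widetilde{s}_{1}\circ u^{-1}$ is a right homotopy inverse, and we choose $\widehat{s}=\widetilde{s}_{1}\circ u^{-1}$.

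Rationally $u$ is just multiplication by a unit in $\pi_{2k-1}(S^{2k-1})\otimes\mathbb{Q}$. Choosing $x'_{1}$ appropriately (it is only defined up to sign as a generator of the cokernel), we may take $u\simeq 1$. Then $s=ev\circ\widetilde{s}_{1}=s_{1}$. If one prefers not to renormalise, a unit scalar $c$ would change the Whitehead product to $c[f_{M},s_{1}]$. This can be absorbed by precomposing with the self-equivalence of $S^{n-1}\vee S^{n+2k-2}$ that is degree $c$ on the top sphere, using bilinearity of Whitehead products.

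\emph{Step 3: conclusion.} Substituting $s=s_{1}$ into the diagram of Proposition~\ref{prop:betagamma}~(ii) gives the asserted square. The only delicate point is Step 2. There we must make sure that the right inverse $\widehat{s}$ produced by the construction of~\cite{bt2} can be taken to be any prescribed right inverse, in particular one built from $s_{1}$. I would check this by noting that the construction in~\cite{bt2}*{Theorem 3.4} takes an arbitrary right homotopy inverse as input. That is how the statement of Proposition~\ref{prop:betagamma}~(ii) is phrased, with $\widehat{s}$ ``a right homotopy inverse''.
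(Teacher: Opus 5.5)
Your proposal is correct and follows the paper's argument. The paper obtains this lemma in one line, by applying Proposition~\ref{prop:betagamma}~(ii) to the rational classifying map $\chi$ of Proposition~\ref{rationalbundle}, and leaves implicit the identification $s=s_{1}$ that your Step~2 makes explicit via the adjoint of $s_{1}$. Your renormalisation hedge is not needed: $\chi$ represents $x_{1}$, which is dual to the Hurewicz image $x'_{1}$ of $s_{1}$, so $\Omega\chi\circ\widetilde{h}$ already has degree exactly one.
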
 
\vspace{-0.5cm}~$\qqed$\bigskip 

Lemma \ref{lem:Sphere-->M-bar} and Lemma \ref{lem:Sphere-->M-bar_rational} together now imply the following.

\begin{prop}\label{prop:varthetaWhitehead}
    Rationally, there exists a homotopy commutative diagram
    \begin{equation*} 
        \begin{tikzcd}[row sep=3em, column sep = 3em] 
            & S^{n+2k-2} \arrow[dl, swap, "\vartheta"] \arrow[d, "j\circ\gamma"] \arrow[dd, bend left=45, "{[f_M,s_1]}"] \\ 
            S^{n-1}\vee\overline{E} \arrow[r, "e"] \arrow[dr, swap, "f_M\perp\overline{g}"] & \widehat{E} \arrow[d, "\widehat{g}"] \\
            & \overline{M}.            
        \end{tikzcd}
    \end{equation*} 
    In particular, there is a rational homotopy \([f_M,s_1]\simeq(f_M\perp\overline{g})\circ\vartheta\). \qed
\end{prop}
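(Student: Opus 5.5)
The plan is to read off the rational homotopy by composing the two earlier commutative diagrams and checking that the extra curved arrow in the statement commutes with them. Throughout, the composite written $j\circ\gamma$ in the diagram means the map $\gamma\circ j:S^{n+2k-2}\rightarrow\widehat{E}$ that defines $\vartheta$.

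First, Lemma~\ref{lem:Sphere-->M-bar} holds integrally, so it holds rationally as well. It gives the upper triangle $e\circ\vartheta\simeq\gamma\circ j$, coming straight from the definition $\vartheta=e^{-1}\circ\gamma\circ j$. It also gives the lower triangle $\widehat{g}\circ e\simeq f_{M}\perp\overline{g}$, which follows from Proposition~\ref{prop:betagamma}(i) and Lemma~\ref{lem:Ehatproperty1}. Combining the two yields
\[
(f_{M}\perp\overline{g})\circ\vartheta\simeq\widehat{g}\circ e\circ\vartheta\simeq\widehat{g}\circ\gamma\circ j .
\]

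It remains to show that $\widehat{g}\circ\gamma\circ j\simeq[f_{M},s_{1}]$ rationally. By Lemma~\ref{lem:Sphere-->M-bar_rational}, rationally $\widehat{g}\circ\gamma\simeq(f_{M}\perp[f_{M},s_{1}])\circ\phi$, where $\phi:S^{n-1}\amsrtimes S^{2k-1}\rightarrow S^{n-1}\vee S^{n+2k-2}$ is the homotopy equivalence in that diagram. Precomposing with $j$ gives $\widehat{g}\circ\gamma\circ j\simeq(f_{M}\perp[f_{M},s_{1}])\circ\phi\circ j$. We must therefore check that $\phi\circ j$ is homotopic to the inclusion $i_{2}$ of the second wedge summand. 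Once that holds, $(f_{M}\perp[f_{M},s_{1}])\circ i_{2}=[f_{M},s_{1}]$, which finishes the argument.

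The main point is that this identification of $\phi\circ j$ is correct. The equivalence $\phi$ used in Proposition~\ref{prop:betagamma}(ii), and hence in Lemma~\ref{lem:Sphere-->M-bar_rational}, should be taken as the inverse of $i\perp j$ from Lemma~\ref{lem:spherehalfsmash}(iv). With that choice $\phi\circ j\simeq i_{2}$ holds by construction.

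If instead the equivalence in \cite{bt2} is the standard splitting of the half-smash, the check is still easy rationally. The target group is
\[
\pi_{n+2k-2}(S^{n-1}\vee S^{n+2k-2})\otimes\mathbb{Q}\cong\pi_{n+2k-2}(S^{n-1})\otimes\mathbb{Q}\oplus\mathbb{Q}\oplus(\text{Whitehead product terms}).
\]
The first summand vanishes because $n\geq 4k$, so $\pi_{n+2k-2}(S^{n-1})$ is torsion as shown in Lemma~\ref{lem:vartheta_decomp1}(i). The cross-term summand is $\pi_{n+2k-2}$ of a space that is $(2n+2k-5)$-connected, so it vanishes since $n>3$. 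Hence $\phi\circ j$ is determined by its degree on the top sphere. Since $q\circ j\simeq1$ by Lemma~\ref{lem:spherehalfsmash}(ii), and the splitting's projection onto $S^{n+2k-2}$ is $q$, that degree is $1$. Therefore $\phi\circ j\simeq i_{2}$ rationally, and the homotopy $[f_{M},s_{1}]\simeq(f_{M}\perp\overline{g})\circ\vartheta$ follows.
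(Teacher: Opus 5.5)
Your proof is correct and follows the paper's route: the paper gets the proposition simply by juxtaposing Lemma~\ref{lem:Sphere-->M-bar} with Lemma~\ref{lem:Sphere-->M-bar_rational}, which is what you do. The paper leaves implicit that the splitting equivalence composed with $j$ is (rationally) the inclusion of $S^{n+2k-2}$; your verification of this step is sound.
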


Now, recall that by Lemma~\ref{lem:vartheta_decomp}, \(\vartheta \simeq \vartheta_{1} + \vartheta_{2} + \vartheta_{3}\), where \(\vartheta_{1}\) represents a torsion class in \(\pi_{n+2k-2}(S^{n-1})\), \(\vartheta_{2} \simeq i_{2}\circ f_{E}\), and \(\vartheta_{3} \simeq \sum_{i=2}^{r} a_{i}\cdot [i_{1},\overline{s}_{i}]\). Rationally, $\vartheta_{1}$ is null homotopic, so we are left with the homotopy 
\[
    \vartheta\simeq i_{2}\circ f_{E} + \sum_{i=2}^{r} a_{i}\cdot [i_{1},\overline{s}_{i}].
\]  
Compose $\vartheta$ with \(f_M\perp \overline{g}\). Since by definition \((f_M\perp\overline{g}) \circ \overline{s}_{i} \simeq \overline{g}\circ s'_{i}\simeq s_i\), we obtain 
\[
    (f_M\perp \overline{g})\circ\vartheta \simeq \overline{g}\circ f_{E}+\sum_{i=2}^{r} a_{i}\cdot [f_M,s_{i}].
\] 
Proposition \ref{prop:varthetaWhitehead} therefore gives the following. 

\begin{lem} 
   \label{lem:varthetaWhiteheadeqn} 
   Rationally, there is a homotopy $[f_M,s_{1}]\simeq \overline{g}\circ f_{E}+\sum_{i=2}^{r} a_{i}\cdot [f_M,s_{i}]$.~$\qqed$  
\end{lem}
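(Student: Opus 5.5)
The plan is to combine Proposition~\ref{prop:varthetaWhitehead} with the rational form of the decomposition of $\vartheta$ in Lemma~\ref{lem:vartheta_decomp}, and then compose with $f_M\perp\overline{g}$ using naturality of Whitehead products.

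First, work rationally throughout. By Lemma~\ref{lem:vartheta_decomp},
\[
\vartheta\simeq i_{1}\circ\delta+i_{2}\circ f_{E}+\sum_{i=2}^{r}a_{i}\cdot[i_{1},\overline{s}_{i}],
\]
where $\delta$ represents a torsion class in $\pi^{S}_{2k-1}$. Rationally $\delta$ is null homotopic, so the first summand disappears.

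Next, compose with $f_M\perp\overline{g}\colon S^{n-1}\vee\overline{E}\to\overline{M}$. Since $n+2k-2\geq 2$, the domain $S^{n+2k-2}$ is a suspension. Postcomposition with any map therefore induces a homomorphism on $\pi_{n+2k-2}$, so the sum decomposition is preserved. We then identify each term.
\begin{itemize}
\item[(a)] $(f_M\perp\overline{g})\circ i_{2}\circ f_{E}=\overline{g}\circ f_{E}$, directly from the definition of a wedge sum.
\item[(b)] For each $i$, naturality of Whitehead products gives $(f_M\perp\overline{g})\circ[i_{1},\overline{s}_{i}]\simeq[(f_M\perp\overline{g})\circ i_{1},(f_M\perp\overline{g})\circ\overline{s}_{i}]$. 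Here $\overline{s}_{i}=i_{2}\circ s'_{i}$ is the inclusion of $s'_i$ into the wedge. The first entry is $f_M$, and the second is $\overline{g}\circ s'_{i}=s_{i}$ by definition.
\end{itemize}
This yields $(f_M\perp\overline{g})\circ\vartheta\simeq\overline{g}\circ f_{E}+\sum_{i=2}^{r}a_{i}[f_M,s_{i}]$.

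Finally, Proposition~\ref{prop:varthetaWhitehead} supplies the rational homotopy $[f_M,s_{1}]\simeq(f_M\perp\overline{g})\circ\vartheta$. Equating the two expressions gives the statement.

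The only point needing care is (b): checking that Whitehead products are natural under postcomposition, and that additivity holds after composing. Both are standard for maps out of a suspended sphere. The argument is otherwise bookkeeping.
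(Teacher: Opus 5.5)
Your proposal is correct and follows essentially the same argument as the paper. The paper also drops the torsion term $i_1\circ\delta$ rationally, composes the remaining expression for $\vartheta$ with $f_M\perp\overline{g}$, and uses naturality of Whitehead products together with $(f_M\perp\overline{g})\circ\overline{s}_i\simeq\overline{g}\circ s'_i= s_i$. It then equates the result with $[f_M,s_1]$ via Proposition~\ref{prop:varthetaWhitehead}.
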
 

The rest of this Subsection is dedicated to showing that $a_{i}=0$ for $2\leq i\leq r$. To do so we will put some hypotheses on the Poincar\'{e} Duality complex $M$. These may not be necessary and it would be interesting to know if a more general result holds.  
 
\begin{hypothesis} \label{hyp} 
    Let \(M\) be a \((2k-1)\)-connected \(n\)-dimensional Poincar\'{e} Duality complex with \(n>4k\) and \(H_{2k}(M)\cong\oplus_{i=1}^{r}\mathbb{Z}\). Suppose that, for \(2\leq i\leq r\), the basis element \(x_{i}\in H^{2k}(\overline{M};\Q)\) satisfies $x_{i}^{2}=0$. 
\end{hypothesis} 

\begin{rem}\label{rem:hypmaps} 
    The condition that $x_{i}^{2}=0$ rationally has a useful interpretation. In what we discuss here, we rationalise spaces and maps. There is a ring isomorphism $H^{\ast}(BS^{2k-1};\mathbb{Q})\cong\mathbb{Q}[y]$ where $\vert y\vert=2k$. Let \(h:BS^{2k-1} \rightarrow BS^{4k-1}\) represent the cohomology class $y^{2}$. A Serre spectral sequence calculation shows that the homotopy fibre of $h$ has the same rational cohomology as $S^{2k}$, implying that it is homotopy equivalent to $S^{2k}$ (this may be thought of as a rational version of the Hopf fibration). Now, for $2\leq i\leq r$, let \(\chi'_i:\overline{M}\rightarrow BS^{2k-1}\) represent the basis element \(x_{i}\in H^{2k}(\overline{M};\Q)\). The composition $h\circ\chi'$ represents $x_{i}^{2}$. So as $x_{i}^{2}=0$ by hypothesis, $h\circ\chi'$ is null homotopic, implying that there is a lift
    \begin{equation*}
        \begin{tikzcd}[row sep=3em, column sep = 3em] 
        & S^{2k} \arrow[d] \\ 
        \overline{M} \arrow[ur, dashed, "\chi_i"] \arrow[r, "\chi'_i"] & BS^{2k-1}. 
        \end{tikzcd} 
    \end{equation*} 
    for some map $\chi_{i}$. 
\end{rem} 

\begin{rem}\label{rem:hyp}
    The hypothesis that $n>4k$ rules out $(2k-1)$-connected $4k$-dimensional Poincar\'{e} Duality complexes. The hypothesis on rational squares being zero is strong but does hold for interesting classes of Poincar\'{e} Duality complexes. For example, if $M$ is a connected sum of products of spheres this holds, or if $\overline{M}$ is rationally homotopy equivalent to a wedge of spheres then this holds. The latter case occurs if $M$ is a torsion-free $(2k-1)$-connected $(4k+1)$-dimensional Poincar\'{e} Duality complex, or more generally if $M$ is a $(2k-1)$-connected Poincar\'{e} Duality complex of dimension less than or equal to $6k-2$~\cite{stanton_theriault}*{Lemma 7.8}. It also holds if $M$ is a moment-angle complex corresponding to a minimally non-Golod triangulation of a sphere~~\cite{stanton_theriault}*{Lemma 8.7}. 
\end{rem}

To work with Hypothesis \ref{hyp} it is helpful to establish information about integral homology and cohomology. Since $M$ is $(2k-1)$-connected and $n$-dimensional, Poincar\'{e} Duality implies that $H_{m}(M)$ vanishes for \(m\) in the range $n-2k<m<n-1$. Thus $\overline{M}$ is a $(n-2k)$-dimensional \(CW\)-complex. Poincar\'{e} Duality also implies that there are isomorphisms $H^{n-2k+1}(M)\cong 0$ and $H^{n-2k}(M)\cong\oplus_{i=1}^{r}\mathbb{Z}$. The Universal Coefficient Theorem implies that $H^{m}(M)\cong H_{m}(M)/T_{m}\oplus T_{m-1}$ for any $m\geq 1$, where $T_{j}$ is the torsion subgroup of $H_{j}(M)$. As $H^{n-2k+1}(M)\cong 0$, we obtain $T_{n-2k}\cong 0$, so as $H^{n-2k}(M)\cong\oplus_{i=1}^{r}\mathbb{Z}$ we obtain $H_{n-2k}(M)\cong\oplus_{i=1}^{r}\mathbb{Z}$. Thus $H_{n-2k}(\overline{M})\cong\oplus_{i=1}^{r}\mathbb{Z}$. Therefore, if $M_{n-2k-1}$ is the $(n-2k-1)$-skeleton of~$M$, then there is a homotopy cofibration 
\[
    \bigvee_{i=1}^{r} S^{n-2k-1}\xlongrightarrow{g} M_{n-2k-1} \longrightarrow \overline{M}
\] 
where $g$ attaches the $(n-2k)$-cells to $\overline{M}$. This homotopy cofibration has 
a connecting map 
\[
    \delta:\overline{M} \longrightarrow \bigvee_{i=1}^{r} S^{n-2k}
\] 
and there is a homotopy coaction 
\[
    \psi:\overline{M} \longrightarrow \overline{M}\vee\left(\bigvee_{i=1}^{r} S^{n-2k}\right)
\]
with the property that $\psi$ composed with the pinch map to $\overline{M}$ is homotopic to the identity map and~$\psi$ composed with the pinch map to $\bigvee_{i=1}^{r} S^{n-2k}$ is homotopic to $\delta$. 

Consider the basis $\{x_{1},\ldots,x_{r}\}$ of $H^{2k}(M)$. By Poincar\'{e} Duality, there is a class $y_{i}\in H^{n-2k}(M)$ for each $x_{i}$ such that $x_{i}\cup y_{i}=z$, where $z$ generates $H^{n}(M)$. The name $x_{i}$ also refers to the corresponding element in $H^{2k}(\overline{M})$, and we will use $y_{i}$ to 
also refer to the corresponding element in $H^{n-2k}(\overline{M})$. The connecting map induces an isomorphism 
\(\delta^*:H^{2k}(\bigvee_{i=1}^{r} S^{n-2k}) \rightarrow H^{2k}(\overline{M})\). There is a one-to-one correspondence between the changes of basis in $H^{\ast}(\bigvee_{i=1}^{r} S^{n-2k})$ and self-homotopy equivalences of $\bigvee_{i=1}^{r} S^{n-2k}$. So changing $\bigvee_{i=1}^{r} S^{n-2k}$ by a self-homotopy 
equivalence if necessary, we may assume that the restriction of $\delta^{\ast}$ to the cohomology 
of the $\ell^{th}$ wedge summand of $\bigvee_{i=1}^{r} S^{n-2k}$ has image $y_{\ell}$. 

Now rationalize spaces and maps. We fix an $\ell$ in the range $2\leq\ell\leq r$. Let 
\[
    \chi'_{\ell}:\overline{M} \longrightarrow BS^{2k-1}
\] 
be the map that represents $x_{\ell}$. As Hypothesis~\ref{hyp} holds, Remark~\ref{rem:hypmaps} 
implies that $\chi'_{\ell}$ factors as 
\[
    \overline{M}\xlongrightarrow{\chi_{\ell}} {S^{2k}} \longrightarrow BS^{2k-1}
\]  
for a map $\chi_{\ell}$. Let 
\[
    \upsilon_{\ell}:\bigvee_{i=1}^{r} S^{n-2k} \longrightarrow S^{n-2k}
\] 
be the pinch map to the $\ell^{th}$-wedge summand. Define $\psi_{\ell}$ by the composite 
\begin{equation}\label{eq:psi_ell_def}
    \psi_{\ell}:\overline{M} \xlongrightarrow{\psi} \overline{M}\vee\left(\bigvee_{i=1}^{r} S^{n-2k}\right)  
      \xlongrightarrow{\chi_{\ell}\vee\upsilon_{\ell}} S^{2k}\vee S^{n-2k}.
\end{equation} 
Note that as $\psi$ is a coaction, composing $\psi_{\ell}$ with the pinch map to $S^{2k}$ is homotopic to $\chi_{\ell}$ and composing~$\psi_{\ell}$ with the pinch map to $S^{n-2k}$ is homotopic to $\upsilon_{\ell}\circ\delta$. 
      
Taking \([f_M,s_{1}]\) and post-composing with $\psi_{\ell}$, by Lemma~\ref{lem:varthetaWhiteheadeqn} we see (using left distributivity) that
\begin{equation} 
  \label{lem:psi_eqn} 
  \psi_{\ell}\circ [f_M,s_{1}]\simeq 
    \psi_{\ell}\circ \overline{g}\circ f_{E}+\sum_{i=2}^{r} a_{i}\cdot\psi_{\ell}\circ [f_M,s_{i}]. 
\end{equation} 
The terms in~(\ref{lem:psi_eqn}) are considered one at a time in what follows, culminating in Theorem~\ref{thm:varthetaclass}, which identifies the class of \(\vartheta\) under Hypothesis \ref{hyp}.        
\begin{lem} 
   \label{lem:psi_eqn1} 
   Rationally, the map $\psi_{\ell}\circ [f_M,s_{1}]$ is null homotopic. 
\end{lem}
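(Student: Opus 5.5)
Since $\psi_{\ell}$ is a map, composing commutes with Whitehead products, so $\psi_{\ell}\circ[f_M,s_1]\simeq[\psi_{\ell}\circ f_M,\psi_{\ell}\circ s_1]$. I will show that $\psi_{\ell}\circ s_1$ is rationally null homotopic; then the Whitehead product has a trivial entry and is null homotopic.

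The target of $\psi_{\ell}\circ s_1$ is $S^{2k}\vee S^{n-2k}$, which is $(2k-1)$-connected. Rationally, $\pi_{2k}(S^{2k}\vee S^{n-2k})\otimes\mathbb{Q}$ is detected by the two pinch maps, because $n-2k>2k$ (from $n>4k$ in Hypothesis~\ref{hyp}). The Whitehead product contributions begin only in degree $n-1>2k$. It therefore suffices to check the two projections.

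\emph{Projection to $S^{n-2k}$.} This projection is $\upsilon_{\ell}\circ\delta\circ s_1$, a map $S^{2k}\to S^{n-2k}$. Since $2k<n-2k$, it is null homotopic for connectivity reasons.

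\emph{Projection to $S^{2k}$.} This projection is $\chi_{\ell}\circ s_1$, and a map $S^{2k}\to S^{2k}$ is determined rationally by its degree. That degree equals the degree of the composite $S^{2k}\xrightarrow{s_1}\overline{M}\xrightarrow{\chi'_{\ell}}BS^{2k-1}$ on $H_{2k}$, since $S^{2k}\to BS^{2k-1}$ is the inclusion of the bottom cell and is an isomorphism on $H^{2k}$. The cohomology class pulled back is $s_1^{\ast}(x_{\ell})=\langle x_{\ell},x'_1\rangle=0$ for $\ell\geq 2$, because $x_{\ell}$ is the dual of $x'_{\ell}$ in the basis $\{x'_1,\ldots,x'_r\}$. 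So $\chi_{\ell}\circ s_1$ has degree zero and is rationally null.

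Hence $\psi_{\ell}\circ s_1\simeq\ast$ rationally. Then $[\psi_{\ell}\circ f_M,\psi_{\ell}\circ s_1]\simeq[\psi_{\ell}\circ f_M,\ast]\simeq\ast$, which proves the lemma.

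The main point needing care is the claim that a rational class in $\pi_{2k}$ of the wedge is detected by the pinch maps. This follows from Hilton's theorem and the degree count $n-1>2k$.
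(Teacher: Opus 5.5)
Your proof is correct and follows the paper's argument. Naturality of the Whitehead product reduces the claim to $\psi_{\ell}\circ s_1\simeq\ast$. Since $n-2k>2k$, this composite is detected by the pinch to $S^{2k}$, where $\chi_{\ell}\circ s_1$ has degree $\langle x_{\ell},x'_1\rangle=0$. You are slightly more explicit than the paper in two places: you dispose of the $S^{n-2k}$ component by connectivity, and you justify the degree computation via the bottom-cell inclusion $S^{2k}\to BS^{2k-1}$.
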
 

\begin{proof} 
The naturality of the Whitehead product implies that $\psi_{\ell}\circ [f_M,s_{1}]\simeq [\psi_{\ell}\circ f_M,\psi_{\ell}\circ s_{1}]$. It will be shown that $\psi_{\ell}\circ s_{1}$ is null homotopic, which implies that $[\psi_{\ell}\circ f_M,\psi_{\ell}\circ s_{1}]$ is null homotopic. 

Consider the composite 
\[
    S^{2k} \xlongrightarrow{s_{1}} \overline{M} \xlongrightarrow{\psi_{\ell}} S^{2k}\vee S^{n-2k}.
\]
By Hypothesis~\ref{hyp}, the $(2k-1)$-connected, $n$-dimensional Poincar\'{e} Duality complex $M$ 
satisfies $n>4k$, so $n-2k>2k$. Therefore the Hilton-Milnor Theorem implies that $\psi_{\ell}\circ s_{1}$ 
is determined by its pinch map to $S^{2k}$. But $\psi_{\ell}$ composed with the pinch map to 
$S^{2k}$ is homotopic to \(\chi_{\ell}:\overline{M} \rightarrow S^{2k}\). Thus $\psi_{\ell}\circ s_{1}$ is determined by $\chi_{\ell}\circ s_{1}$. By definition, $\chi^{\ast}_{\ell}$ has image $x_{\ell}$ while, by definition, $s_{1}^{\ast}$ projects the basis $\{x_{1},\ldots,x_{r}\}$ of $H^{2k}(M;\Q)$ to $\{x_{1}\}$. Thus $\chi_{\ell}\circ s_{1}$ induces the zero map in cohomology. As $\chi_{\ell}\circ s_{1}$ is a self-map of $S^{2k}$, its homotopy class is determined by its homology class, so $\chi_{\ell}\circ s_{1}$ is null homotopic. Hence $\psi_{\ell}\circ s_{1}$ is null homotopic. 
\end{proof} 

\begin{lem} 
   \label{lem:psi_eqn2} 
   Rationally, the map $\psi_{\ell}\circ \overline{g}\circ f_{E}$ is null homotopic. 
\end{lem}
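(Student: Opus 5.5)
We want to show that, rationally, $\psi_{\ell}\circ\overline{g}\circ f_{E}\colon S^{n+2k-2}\rightarrow S^{2k}\vee S^{n-2k}$ is null homotopic. The approach is to use the Hilton--Milnor Theorem, which splits this class into components, and then to kill each component separately: some by dimension, one by the cell structure of $\overline{E}$, and the rest by naturality of the coaction.

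\textbf{Step 1: Hilton--Milnor decomposition.} Rationally, $\pi_{n+2k-2}(S^{2k}\vee S^{n-2k})$ is a sum of contributions from iterated Whitehead products of the two fundamental classes. Since $S^{2k}$ is an even sphere, rationally its homotopy is concentrated in degrees $2k$ and $4k-1$. Since $n-2k>2k$, the sphere $S^{n-2k}$ contributes only in degree $n-2k$, and in $2n-4k-1$ if $n-2k$ is even. A degree count shows which terms can land in degree $n+2k-2$.
\begin{itemize}
\item $4k-1=n+2k-2$ would force $n=2k+1$, which is impossible.
\item $2n-4k-1=n+2k-2$ would force $n=6k-1$, which is odd, whereas that term requires $n-2k$ even; so this term does not occur either.
\item The Whitehead product $[\iota_{2k},\iota_{n-2k}]$ has degree $n-1$, not $n+2k-2$.
\item A triple product $[\iota_{2k},[\iota_{2k},\iota_{n-2k}]]$ has degree $n+2k-2$. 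This is the only surviving term.
\end{itemize}
So the composite is a rational multiple of $[\iota_{2k},[\iota_{2k},\iota_{n-2k}]]$, and it suffices to show that this coefficient vanishes. I would first double-check the degree bookkeeping, since the whole reduction rests on it.

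\textbf{Step 2: Detecting the coefficient.} Pass to the homotopy cofibre $S^{2k}\times S^{n-2k}\cup\ldots$. Equivalently, detect the coefficient in the homotopy cofibre $C$ of $\psi_{\ell}\circ\overline{g}\circ f_{E}$ through a cup product or Massey product relation of the form $a^{2}b=\lambda\,(\text{top class})$. Pulling back along $\psi_{\ell}\circ\overline{g}$, this relation would have to be realised by the product $(\overline{g}^{*}x_{\ell})^{2}\cdot\overline{g}^{*}y_{\ell}$ in $E$, where the relevant top class is the fundamental class of $E$ in degree $n+2k-1$.

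\textbf{Step 3: Killing the coefficient.} The simplest route is the following. The $S^{2k}$-component of $\psi_{\ell}$ is $\chi_{\ell}$, and $\chi_{\ell}\circ\overline{g}$ factors through $S^{2k}$. By Hypothesis~\ref{hyp}, rationally $x_{\ell}^{2}=0$, so $\overline{g}^{*}(x_{\ell})^{2}=\overline{g}^{*}(x_{\ell}^{2})=0$, and the cup product that would detect the triple Whitehead product vanishes.

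An alternative, more homotopical route uses naturality of the coaction. The cofibration $\overline{E}\rightarrow E$ gives the relation $\psi_{\ell}\circ\overline{g}\circ f_{E}\simeq(\ldots)$. Alternatively, by Proposition~\ref{prop:skeltoskel} we have $g\circ\iota\simeq i\circ\overline{g}$, so the composite $S^{n+2k-2}\xrightarrow{f_E}\overline{E}\xrightarrow{\overline{g}}\overline{M}\rightarrow M$ is null homotopic, because $\iota\circ f_{E}\simeq\ast$. One then uses the $(n-2k)$-cell coaction to show that the lift to $\overline{M}$ lies in the image of the attaching map $f_M$, composed with a map that is rationally detected only via $x_1$. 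That last claim is the delicate point.

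\textbf{Main obstacle.} The hardest step is rigorously showing that the triple Whitehead product coefficient is detected by the cup product $x_{\ell}^{2}y_{\ell}$. This is where Hypothesis~\ref{hyp} enters. I would carry it out with a rational Sullivan-model computation for the cofibre $C$, using that $x_{\ell}^{2}=0$ already in $H^{*}(\overline{M};\mathbb{Q})$. Because of this, any Massey-type obstruction can be taken to vanish once the defining system is pulled back along $\chi_{\ell}$.
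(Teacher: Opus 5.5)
Your Step~1 is correct: rationally $\pi_{n+2k-2}(S^{2k}\vee S^{n-2k})\cong\mathbb{Q}$, spanned by $[\iota_{2k},[\iota_{2k},\iota_{n-2k}]]$, so everything reduces to a single coefficient $\lambda$. Step~3, however, does not show that $\lambda=0$. Let $a,b$ be the classes of the two spheres, and let $C$ be the cofibre of $\lambda[\iota_{2k},[\iota_{2k},\iota_{n-2k}]]$. In $C$ one has $a^{2}=0$ and $ab=0$ whatever $\lambda$ is, so no cup product sees the top cell. A triple Whitehead product is detected by the Massey product $\langle a,a,b\rangle$. The vanishing of $a^{2}$ is what makes that Massey product \emph{defined}, not what makes it zero. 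Hypothesis~\ref{hyp} adds nothing at this point, since $\iota_{2k}^{2}=0$ on $S^{2k}$ anyway; it was only needed to construct $\chi_{\ell}$. Pulling back along the map $E\to C$ that extends $\psi_{\ell}\circ\overline{g}$ does not help either. For $\ell\geq 2$ we have $u=g^{*}x_{\ell}\neq 0$, so by Poincar\'{e} Duality the indeterminacy $u\cdot H^{n-1}(E;\mathbb{Q})$ of $\langle u,u,g^{*}y_{\ell}\rangle$ is all of $H^{n+2k-1}(E;\mathbb{Q})$. Your alternative route stops at exactly the claim it would need, namely that the lift is ``detected only via $x_{1}$''.

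The deeper problem is that no argument valid for an arbitrary lift $\overline{g}$ from Proposition~\ref{prop:skeltoskel} can work, because $\lambda$ depends on that choice. By Step~3 of that proof, $\overline{E}$ may be taken to be $(n-1)$-dimensional. Let $v\in H^{n-1}(E)$ be Poincar\'{e} dual to $g^{*}x_{\ell}$, so that $g^{*}x_{j}\cup v$ is $\delta_{j\ell}$ times the fundamental class. Pinch the corresponding top cell by a coaction $\nu\colon\overline{E}\to\overline{E}\vee S^{n-1}$ and put $\overline{g}'=(\overline{g}\perp m f_{M})\circ\nu$. Since $i_{M}\circ f_{M}\simeq\ast$, this is again a lift. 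Because $g^{*}x_{\ell}\cup v$ detects the Whitehead product of $s'_{\ell}$ with the pinched sphere in $\nu\circ f_{E}$, rationally $\overline{g}'\circ f_{E}\simeq\overline{g}\circ f_{E}\pm m[f_{M},s_{\ell}]$. By Lemma~\ref{lem:ellWhitehead} this shifts $\lambda$ by a nonzero multiple of $m$. So the statement can hold only for a suitably chosen $\overline{g}$ (equivalently $\kappa$), and any proof must make that choice.

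The paper's proof does not make such a choice, and it contains a concrete error. It lifts the composite to the homotopy fibre $X$ of $S^{2k}\vee S^{n-2k}\to BS^{2k-1}$ and asserts $\pi_{n+2k-2}(X)\otimes\mathbb{Q}=0$. In doing so it discards the $(4k-2)$-cell of $\Omega S^{2k}$, which contributes a wedge summand $S^{n+2k-2}$ to $S^{n-2k}\amsrtimes\Omega S^{2k}$. Since $BS^{2k-1}\simeq K(\mathbb{Q},2k)$ rationally, in fact $\pi_{n+2k-2}(X)\otimes\mathbb{Q}\cong\pi_{n+2k-2}(S^{2k}\vee S^{n-2k})\otimes\mathbb{Q}\cong\mathbb{Q}$. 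That is exactly your Step~1, and it is also the nontriviality that Lemma~\ref{lem:ellWhitehead} relies on.
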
 

\begin{proof} 
By definition, the map \(\chi_{\ell}:\overline{M} \rightarrow S^{2k}\) factors the map \(\chi'_{\ell}:\overline{M} \rightarrow BS^{2k-1}\) that represents $x_{\ell}$. The skeletal inclusion 
of \(\overline{M}\) into \(M\) induces an isomorphism on $H^{2k}$, so $\chi'_{\ell}$ factors as a composite 
\[
    \overline{M} \longrightarrow M \xlongrightarrow{\chi''_{\ell}} BS^{2k-1}.
\]
Now consider the diagram 
\begin{equation*}
    \begin{tikzcd}[row sep=3em, column sep = 3em]
            S^{n+2k-2} \arrow[r, "f_E"] \arrow[dr, "\ast"] & \overline{E} \arrow[d] \arrow[r, "\overline{g}"] & \overline{M} \arrow[d] \arrow[r, "\psi_\ell"] & S^{2k} \vee S^{n-2k} \arrow[d, "p"] \\
            & E \arrow[r, "f_E"] & M \arrow[r, "\chi''_\ell"] & BS^{2k-1}
    \end{tikzcd} 
\end{equation*}
where $p$ is the composite \(S^{2k}\vee S^{n-2k} \xrightarrow{p_{1}} S^{2k} \rightarrow BS^{2k-1}\), with $p_{1}$ the pinch map to the first wedge summand. The left triangle homotopy commutes since $f_{E}$ is the attaching map for the top cell of $E$. The middle square homotopy commutes by Proposition \ref{prop:skeltoskel}. By the definition of $\psi_{\ell}$ in~(\ref{eq:psi_ell_def}), there is a homotopy $p_{1}\circ\psi_{\ell}\simeq\chi_{\ell}$. Thus $p\circ\psi_{\ell}\simeq\chi''_{\ell}$, implying that the right square homotopy commutes. The homotopy commutativity of the diagram  implies that $p\circ\psi_{\ell}\circ \overline{g}\circ f_{E}$ is null homotopic. Therefore there exists a lift \(\lambda\) of $\psi_{\ell}\circ \overline{g}\circ f_{E}$ to the homotopy fibre $X$ of $p$: 
\begin{equation}\label{dgm:psialphalift} 
    \begin{tikzcd}[row sep=3em, column sep = 3em] 
        & S^{n+2k-2} \arrow[d, "\psi_{\ell}\circ \overline{g}\circ f_{E}"] \arrow[dl, dashed, swap, "\lambda"] \\ 
        X \arrow[r] & S^{2k}\vee S^{n-2k} \arrow[r, "p"] & BS^{2k-1}.
    \end{tikzcd} 
\end{equation}  

We will show that \(\lambda\) is null homotopic, from which the assertion of the Lemma follows. The composite defining $p$ produces the homotopy fibration diagram 
\begin{equation*}
    \begin{tikzcd}[row sep=3em, column sep = 3em]
            S^{n-2k}\amsrtimes\Omega S^{2k} \arrow[d, equal] \arrow[r] & X \arrow[d] \arrow[r] & S^{4k-1} \arrow[d] \\
            S^{n-2k}\amsrtimes\Omega S^{2k} \arrow[r] & S^{2k}\vee S^{n-2k} \arrow[r, "p_1"] \arrow[d, "p"] & S^{2k} \arrow[d] \\
            & BS^{2k-1} \arrow[r, equal] & BS^{2k-1}.
    \end{tikzcd} 
\end{equation*}
Here, the homotopy fibre of $p_{1}$ is identified as $S^{n-2k}\amsrtimes\Omega S^{2k}$ by~\cite{ganea65}. Observe that as $p_{1}$ has a right inverse, there is a pullback map \(S^{4k-1} \rightarrow X\) that is a section for the map \(X \rightarrow S^{4k-1}\). The existence of this section implies that
\[
    \pi_{n+2k-2}(X)\cong\pi_{n+2k-2}(S^{4k-1})\oplus\pi_{n+2k-1}(S^{n-2k}\amsrtimes\Omega S^{2k}).
\] 
Thus to show $\lambda$ is null homotopic, it suffices to show that \(\pi_{n+2k-2}(X)\) is (rationally) trivial. 

The space $S^{4k-1}$ has a single rational homotopy group in dimension $4k-1$. If $n+2k-2=4k-1$ then $n=2k+1$, but $n>4k$ by Hypothesis~\ref{hyp}, so it must be the case that $\pi_{n+2k-2}(S^{4k-1})\cong 0$. As for \(\pi_{n+2k-1}(S^{n-2k}\amsrtimes\Omega S^{2k})\), note that \(\Omega S^{2k}\) has its bottom cell in dimension \(2k-1\) and the next in dimension \(4k-2\). Since we are considering the homotopy group in dimension \(n+2k-1\), it is therefore enough to work with the halfsmash \(S^{n-2k}\amsrtimes S^{2k-1}\). Recall that $S^{n-2k}\amsrtimes S^{2k-1}\simeq S^{n-2k}\vee S^{n-1}$ and, by~\cite{ganea65}, the inclusion of the wedge into the product results in a homotopy fibration 
\[
    \Omega S^{n-2k}\ast\Omega S^{n-1} \longrightarrow S^{n-2k}\vee S^{n-1} \longrightarrow S^{n-2k}\times S^{n-1}
\]
that splits after looping. Note that $\Omega S^{n-2k}\ast\Omega S^{n-1}$ is $(2n-2k-3)$-connected. 
As $n>4k$ by Hypothesis~\ref{hyp}, we obtain $n+2k-2<2n-2k-2$. Therefore 
\[
    \pi_{n+2k-2}(S^{n-2k}\vee S^{n-1})\cong\pi_{n+2k-2}(S^{n-2k}\times S^{n-1})\cong\pi_{n+2k-2}(S^{n-2k})\times \pi_{n+2k-2}(S^{n-1}).
\] 
We have $\pi_{n+2k-2}(S^{n-2k})\cong 0$ unless $n-2k$ is even and $n+2k-2=2(n-2k)-1$, but in this case we obtain $n-2k=4k-1$, implying that $n-2k$ is both even and odd, a contradiction. Thus $\pi_{n+2k-2}(S^{n-2k})\cong 0$. Similarly, $\pi_{n+2k-2}(S^{n-1})\cong 0$ unless $n-1$ is even and $n+2k-2=2n-3$, but then $n=2k+1$ while Hypothesis~\ref{hyp} states that $n>4k$. Thus $\pi_{n+2k-2}(S^{n-1})\cong 0$. Hence $\pi_{n+2k-2}(X)\cong 0$, as required.
\end{proof} 

\begin{lem} \label{lem:psi_eqn3} 
    Rationally, if $i\neq\ell$, the map $\psi_{\ell}\circ [f_M,s_{i}]$ is null homotopic. 
\end{lem}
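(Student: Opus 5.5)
The argument mirrors Lemma~\ref{lem:psi_eqn1}. By naturality of the Whitehead product, $\psi_{\ell}\circ[f_M,s_{i}]\simeq[\psi_{\ell}\circ f_M,\psi_{\ell}\circ s_{i}]$. It therefore suffices to show that $\psi_{\ell}\circ s_{i}$ is rationally null homotopic when $i\neq\ell$; the Whitehead product is then null homotopic as well.

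\textbf{Step 1: reduce to a self-map of $S^{2k}$.} Consider the composite
\[
S^{2k}\xrightarrow{s_{i}}\overline{M}\xrightarrow{\psi_{\ell}}S^{2k}\vee S^{n-2k}.
\]
Hypothesis~\ref{hyp} gives $n>4k$, so $n-2k>2k$. Hence $S^{n-2k}$ and all Whitehead products in the Hilton--Milnor decomposition of $\pi_{2k}(S^{2k}\vee S^{n-2k})$ other than the $S^{2k}$ summand are at least $2k$-connected. Equivalently, the inclusion $S^{2k}\to S^{2k}\vee S^{n-2k}$ is $(n-2k-1)$-connected, hence an isomorphism on $\pi_{2k}$. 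So $\psi_{\ell}\circ s_{i}$ is determined by its pinch onto $S^{2k}$. Since $\psi$ is a coaction, that pinch is $\chi_{\ell}\circ s_{i}$.

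\textbf{Step 2: compute $\chi_{\ell}\circ s_{i}$ in cohomology.} The map $\chi_{\ell}$ pulls back the rational generator of $H^{2k}(S^{2k})$ to the class representing $x_{\ell}$. This holds because $\chi'_{\ell}$ represents $x_{\ell}$, and the map $S^{2k}\to BS^{2k-1}$ of Remark~\ref{rem:hypmaps} is an isomorphism on $H^{2k}$, being the inclusion of the bottom cell of $BS^{2k-1}$ with homotopy fibre $S^{4k-1}$. The map $s_{i}$ has Hurewicz image $x'_{i}$, and $x_{\ell}$ is the cohomological dual of $x'_{\ell}$. Therefore $s_{i}^{\ast}(x_{\ell})=\langle x_{\ell},x'_{i}\rangle=0$ for $i\neq\ell$. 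For $2\le i\le r$ this uses $s_i=\overline g\circ s'_i$ together with the stated compatibility of bases; for $i=1$ it uses the choice of $s_1$.

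\textbf{Step 3: conclude.} The composite $\chi_{\ell}\circ s_{i}$ is a self-map of $S^{2k}$ that is zero in rational cohomology, so it has degree zero and is null homotopic. Consequently $\psi_{\ell}\circ s_{i}\simeq\ast$, and $\psi_{\ell}\circ[f_M,s_{i}]\simeq\ast$.

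The only point needing care is Step~2: checking that $\chi_{\ell}^{\ast}$ sends the generator exactly to $x_{\ell}$, which is where Hypothesis~\ref{hyp} enters through the lift in Remark~\ref{rem:hypmaps}.
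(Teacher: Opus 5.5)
Your proposal is correct and follows the same route as the paper. The paper uses naturality of the Whitehead product to reduce to showing $\psi_\ell\circ s_i\simeq\ast$, and then repeats the argument of Lemma~\ref{lem:psi_eqn1}: it passes to the pinch $\chi_\ell\circ s_i$ and observes that this is zero on $H^{2k}$. You additionally write out the details the paper leaves implicit, namely the $\pi_{2k}$ connectivity argument and why $\chi_\ell^{\ast}$ pulls the generator back to $x_\ell$.
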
 

\begin{proof} 
The naturality of the Whitehead product implies that $\psi_{\ell}\circ [f_M,s_{i}]\simeq [\psi_{\ell}\circ f_M,\psi_{\ell}\circ s_{i}]$. Arguing exactly as in the proof of Lemma~\ref{lem:psi_eqn1} with $s_{1}$ replaced by $s_{i}$ shows that $\psi_{\ell}\circ s_{i}$ is null homotopic. Therefore $[\psi_{\ell}\circ f_M,\psi_{\ell}\circ s_{i}]$ is null homotopic. 
\end{proof} 

\begin{lem} \label{lem:ellWhitehead} 
    Rationally, the map $\psi_{\ell}\circ [f_M,s_{\ell}]$ is non-trivial. 
\end{lem}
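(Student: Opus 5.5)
By naturality of the Whitehead product, $\psi_{\ell}\circ[f_M,s_{\ell}]\simeq[\psi_{\ell}\circ f_M,\psi_{\ell}\circ s_{\ell}]$. The plan is to identify both entries inside $\pi_{\ast}(S^{2k}\vee S^{n-2k})$ rationally and then use the Hilton–Milnor theorem to see that the resulting Whitehead product is a nonzero basic product.

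\textbf{Second entry.} Since $n-2k>2k$, the argument in the proof of Lemma~\ref{lem:psi_eqn1} applies. The map $\psi_{\ell}\circ s_{\ell}$ is determined by $\chi_{\ell}\circ s_{\ell}$, and in cohomology $s_{\ell}^{\ast}(x_{\ell})$ is the generator, because $x_{\ell}$ is dual to $x'_{\ell}$, the Hurewicz image of $s_{\ell}$. Hence $\psi_{\ell}\circ s_{\ell}\simeq i_{1}\circ\iota_{2k}$, up to a nonzero rational multiple.

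\textbf{First entry.} The map $f_M:S^{n-1}\rightarrow\overline{M}$ attaches the top cell of $M$. Its image under $\psi_{\ell}$ lies in $\pi_{n-1}(S^{2k}\vee S^{n-2k})$. Rationally, for $n>4k$, the relevant summand is the Whitehead product $[\iota_{2k},\iota_{n-2k}]$, possibly together with iterated brackets of $\iota_{2k}$ alone landing in degree $n-1$.

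The coefficient of $[\iota_{2k},\iota_{n-2k}]$ should be detected by the cup product $x_{\ell}\cup y_{\ell}=z$. Consider the cofibre of $\psi_{\ell}\circ f_M$, or equivalently extend $\psi_{\ell}$ over $M$ to a map $M\rightarrow (S^{2k}\vee S^{n-2k})\cup_{\psi_{\ell}f_M}e^{n}$. In the cofibre, the product of the classes pulled back to $x_{\ell}$ and $y_{\ell}$ is a generator of the top class, since $\chi_{\ell}^{\ast}$ hits $x_{\ell}$, $(\upsilon_{\ell}\delta)^{\ast}$ hits $y_{\ell}$, and $x_{\ell}y_{\ell}=z$. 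By the standard relation between cup products in a two-cell-plus-top-cell complex and Whitehead products (the Hopf invariant/linking argument), the coefficient of $[\iota_{2k},\iota_{n-2k}]$ in $\psi_{\ell}\circ f_M$ is therefore $\pm1$, so nonzero. Any remaining component of the form $i_{1}\circ\omega$ with $\omega\in\pi_{n-1}(S^{2k})$ vanishes rationally unless $n-1=4k-1$, which is excluded by $n>4k$. This gives $\psi_{\ell}\circ f_M\simeq c\,[i_{1},i_{2}]$ with $c\neq0$.

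\textbf{Conclusion.} It follows that $\psi_{\ell}\circ[f_M,s_{\ell}]\simeq c\,[[i_{1},i_{2}],i_{1}]$, up to a nonzero multiple. By the Hilton–Milnor theorem (rationally, the free graded Lie algebra on classes of degrees $2k-1$ and $n-2k-1$), the bracket $[[\iota_{2k},\iota_{n-2k}],\iota_{2k}]$ is a nonzero basic product element. Hence the composite is rationally nontrivial. I should double-check sign and degree parity, since for $\iota_{2k}$ of even dimension the bracket $[\iota_{2k},\iota_{2k}]$ is nonzero, but this does not interfere with the claim.

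\textbf{Main obstacle.} The hardest step is the rigorous identification of the $[\iota_{2k},\iota_{n-2k}]$-component of $\psi_{\ell}\circ f_M$ via the cup product $x_{\ell}y_{\ell}=z$. I would first try to do this with the cofibre argument: show that the cofibre of $\psi_{\ell}\circ f_M$ receives a map from $M$ inducing an injection on the relevant cohomology classes, and then read off the Whitehead coefficient from the cup product structure.
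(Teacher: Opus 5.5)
Your proposal is correct and follows essentially the same route as the paper. Both arguments use naturality of the Whitehead product, identify $\psi_{\ell}\circ s_{\ell}$ with $\iota_{2k}$ via $\chi_{\ell}\circ s_{\ell}$, show $\psi_{\ell}\circ f_M$ is a nonzero multiple of $[\iota_{2k},\iota_{n-2k}]$ using the cofibre $D$ and $x_{\ell}\cup y_{\ell}=z$, and conclude with Hilton--Milnor. The one small omission is that you do not say why the $S^{n-2k}$-component of $\psi_{\ell}\circ f_M$ vanishes rationally; it does, because $\pi_{n-1}(S^{n-2k})\otimes\mathbb{Q}=0$ when $n>4k$.
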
 

\begin{proof} 
The naturality of the Whitehead product implies that $\psi_{\ell}\circ [f_M,s_{\ell}]\simeq [\psi_{\ell}\circ f_M,\psi_{\ell}\circ s_{\ell}]$. We begin by considering the composite 
\[
    S^{n-1} \xlongrightarrow{f_M} \overline{M} \xlongrightarrow{\psi_{\ell}} S^{2k}\vee S^{n-2k}
\] 
Observe that $\pi_{n-1}(S^{2k})\cong 0$ since $S^{2k}$ has non-trivial rational homotopy groups only in dimensions $2k$ and $4k-1$ while $n>4k$ by Hypothesis~\ref{hyp}. Similarly, $\pi_{n-1}(S^{n-2k})\cong 0$ since $S^{n-2k}$ has non-trivial rational homotopy groups only in dimension $n-2k$ and, if $n$ is even, in dimension $2(n-2k)-1$, both of which cannot occur since $n>4k$. Thus, by the Hilton-Milnor Theorem, $\psi_{\ell}\circ f_M\simeq t\cdot [\iota_{2k},\iota_{n-2k}]$ for some $t\in\mathbb{Q}$, where $\iota_{2k}$ and $\iota_{n-2k}$ are the inclusions of $S^{2k}$ and $S^{n-2k}$ respectively into $S^{2k}\vee S^{n-2k}$. 

Next, we show that $t$ is nonzero. Define the space $D$ by the homotopy cofibration diagram
\begin{equation}\label{dgm:Ddgrm}
    \begin{tikzcd}[row sep=3em, column sep = 3em]
            S^{n-1} \arrow[d, equal] \arrow[r, "f_M"] & \overline{M} \arrow[r] \arrow[d, "\psi_\ell"] & M \arrow[d] \\
            S^{n-1} \arrow[r, "\psi_\ell\circ f_M"] & S^{2k} \vee S^{n-2k} \arrow[r] & D.
    \end{tikzcd} 
\end{equation}
By definition of $\psi_{\ell}$, the map $\psi_{\ell}^{\ast}$ sends the generators of $H^{2k}(S^{2k};\Q)$ and $H^{n-2k}(S^{n-2k};\Q)$ to $x_{\ell}$ and~$y_{\ell}$ respectively. These elements satisfy $x_{\ell}\cup y_{\ell}=z$, where $z$ generates $H^{n}(M;\Q)\cong\mathbb{Q}$. Thus the homotopy commutativity of the right square in~(\ref{dgm:Ddgrm}) implies that $H^{\ast}(D;\Q)\cong H^{\ast}(S^{2k}\times S^{n-2k};\Q)$. The cup product in $H^{\ast}(D)$ is detected by the Whitehead product, so $\psi_{\ell}\circ f\simeq t\cdot[\iota_{2k},\iota_{n-2k}]$ where~$t$ is nonzero. 

Now consider the composite 
\[
    S^{2k} \xlongrightarrow{s_{\ell}} \overline{M} \xlongrightarrow{\psi_{\ell}} S^{2k}\vee S^{n-2k}
\]
By Hypothesis \ref{hyp}, $n>4k$, implying that $n-2k>2k$, so by the Hilton-Milnor Theorem 
$\psi_{\ell}\circ s_{\ell}$ is determined by its pinch map to $S^{2k}$. But $\psi_{\ell}$ composed 
with the pinch map to $S^{2k}$ is homotopic to $\chi_{\ell}$, so $\psi_{\ell}\circ s_{\ell}$ is determined 
by $\chi_{\ell}\circ s_{\ell}$. By definition, $\chi_{\ell}^{\ast}$ has image $x_{\ell}$ in degree~$2k$ 
cohomology while $s_{\ell}^{\ast}$ projects the basis $\{x_{1},\ldots,x_{r}\}$ of 
$H^{2k}(\overline{M};\Q)$ to $\{x_{\ell}\}$. Thus $\chi_{\ell}\circ s_{\ell}$ induces a 
degree one map in cohomology, implying it induces a degree one map in homology, and so 
is a homotopy equivalence. Hence $\psi_{\ell}\circ s_{\ell}$ is homotopic to the 
inclusion $\iota_{2k}$. 

Putting this together, we obtain $[\psi_{\ell}\circ f_M,\psi_{\ell}\circ s_{\ell}]\simeq t\cdot[[\iota_{2k},\iota_{n-2k}],\iota_{2k}]$ for some non-zero rational number \(t\). The iterated Whitehead product on the right side of the homotopy is rationally non-trivial in $\pi_{n+2k-2}(S^{2k}\vee S^{n-2k})$ by the Hilton-Milnor Theorem. Hence $\psi_{\ell}\circ [f_M,s_{\ell}]$ is rationally non-trivial, as asserted. 
\end{proof} 

\begin{lem} \label{lem:psi_eqn4} 
    For each $2\leq\ell\leq r$, the coefficient $a_{\ell}$ of~(\ref{lem:psi_eqn}) is zero. 
\end{lem}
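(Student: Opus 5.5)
The plan is to rationalise throughout, fix $\ell$ with $2\leq\ell\leq r$, and evaluate both sides of the rational homotopy~(\ref{lem:psi_eqn}):
\[
\psi_{\ell}\circ [f_M,s_{1}]\simeq \psi_{\ell}\circ \overline{g}\circ f_{E}+\sum_{i=2}^{r} a_{i}\cdot\psi_{\ell}\circ [f_M,s_{i}],
\]
which holds in $\pi_{n+2k-2}(S^{2k}\vee S^{n-2k})\otimes\mathbb{Q}$.

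By Lemma~\ref{lem:psi_eqn1} the left side is null homotopic. By Lemma~\ref{lem:psi_eqn2} the first term on the right is null homotopic. By Lemma~\ref{lem:psi_eqn3} every summand with $i\neq\ell$ is null homotopic. The homotopy therefore collapses to
\[
0\simeq a_{\ell}\cdot\psi_{\ell}\circ[f_M,s_{\ell}]
\]
in the rational homotopy group. Here we use that post-composition with $\psi_{\ell}$ is additive on $\pi_{n+2k-2}$, since the domain is a sphere and hence a suspension. This is the left distributivity already invoked in deriving~(\ref{lem:psi_eqn}).

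By Lemma~\ref{lem:ellWhitehead}, the class $\psi_{\ell}\circ[f_M,s_{\ell}]$ is rationally non-trivial. Concretely, it equals $t\cdot[[\iota_{2k},\iota_{n-2k}],\iota_{2k}]$ with $t\neq 0$. By the Hilton--Milnor Theorem this is a nonzero element of a $\mathbb{Q}$-vector space; indeed it is a nonzero multiple of a basis element. A nonzero vector times the rational scalar $a_{\ell}$ vanishes only if $a_{\ell}=0$. Since $a_{\ell}$ is an integer and $\mathbb{Z}\to\mathbb{Q}$ is injective, the integral coefficient $a_{\ell}$ is itself zero.

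The only point requiring care is making sure that the rationalisation of the integral identity of Lemma~\ref{lem:vartheta_decomp} keeps the same integers $a_i$ as coefficients, with $\vartheta_1$ vanishing rationally. This is exactly how Lemma~\ref{lem:varthetaWhiteheadeqn} was obtained, so no new obstacle arises; the argument is essentially bookkeeping on top of Lemmas~\ref{lem:psi_eqn1}--\ref{lem:ellWhitehead}. Since $\ell$ was arbitrary, the conclusion holds for each $2\leq\ell\leq r$.
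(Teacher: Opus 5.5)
Your proposal is correct and follows the paper's proof: you invoke Lemmas~\ref{lem:psi_eqn1}, \ref{lem:psi_eqn2} and~\ref{lem:psi_eqn3} to reduce the rational homotopy~(\ref{lem:psi_eqn}) to $a_{\ell}\cdot\psi_{\ell}\circ[f_M,s_{\ell}]\simeq\ast$, and then use the nontriviality from Lemma~\ref{lem:ellWhitehead} to conclude $a_{\ell}=0$. Your extra remarks on additivity of post-composition and on $\mathbb{Z}\to\mathbb{Q}$ being injective only make explicit what the paper leaves implicit.
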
 

\begin{proof} 
Rationally, by~(\ref{lem:psi_eqn}) there is a homotopy 
\[
    \psi_{\ell}\circ [f_M,s_{1}]\simeq \psi_{\ell}\circ \overline{g}\circ f_{E}+\sum_{i=2}^{r} a_{i}\cdot\psi_{\ell}\circ [f_M, s_{i}].
\] 
By Lemma~\ref{lem:psi_eqn1}, $\psi_{\ell}\circ [f_M,s_{1}]$ is null homotopic, by Lemma~\ref{lem:psi_eqn2}, 
$\psi_{\ell}\circ \overline{g}\circ f_{E}$ is null homotopic, and by Lemma~\ref{lem:psi_eqn3}, each 
$\psi_{\ell}\circ [f_M,\overline{g}\circ s_{i}]$ is null homotopic if $i\neq\ell$. Thus we are left with 
$\ast\simeq a_{\ell}\cdot\psi_{\ell}\circ [f_M,s_{\ell}]$. But by Lemma~\ref{lem:ellWhitehead}, 
$\psi_{\ell}\circ [f_M,s_{\ell}]$ is non-trivial. Thus the only way that $a_{\ell}\cdot\psi_{\ell}\circ [f_M,s_{\ell}]$ can be null homotopic is if $a_{\ell}=0$. 
\end{proof} 

With Lemma~\ref{lem:psi_eqn4} in hand, we can now identify the integral homotopy class of $\vartheta$ in the \(r>1\) case and under the assumptions of Hypothesis \ref{hyp}. 

\begin{thm} \label{thm:varthetaclass} 
    Assume \(r>1\) and Hypothesis~\ref{hyp} holds. Then there is a homotopy $\vartheta\simeq i_{1}\circ\delta + i_{2}\circ f_{E}$. Consequently, there is a homotopy $\vartheta\simeq i_{2}\circ f_{E}$ after localizing away from primes dividing $\pi^S_{2k-1}$.   
\end{thm}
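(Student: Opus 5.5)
The plan is to start from the integral decomposition in Lemma~\ref{lem:vartheta_decomp},
\[
    \vartheta\simeq i_{1}\circ\delta+i_{2}\circ f_{E}+\sum_{i=2}^{r} a_{i}\cdot [i_{1},\overline{s}_{i}],
\]
where $\delta$ represents a torsion class in $\pi^{S}_{2k-1}$ and the $a_{i}$ are integers. The point to stress is that this decomposition, coming from the Ganea splitting of $\Omega(S^{n-1}\vee\overline{E})$, holds integrally. The coefficients $a_{i}$ are the coordinates of $\theta$ in
\[
    \pi_{n+2k-2}(\Omega S^{n-1}\ast\Omega\overline{E})\cong\oplus_{i=2}^{r}\mathbb{Z},
\]
a free abelian group. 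Rationalisation sends the class $\theta$ to the element of $\oplus_{i=2}^{r}\mathbb{Q}$ with the same coordinates, and this is injective on a free group. Thus the $a_{i}$ appearing in the rational identity of Lemma~\ref{lem:varthetaWhiteheadeqn} are the images of the same integers.

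By Lemma~\ref{lem:psi_eqn4}, each $a_{\ell}$ vanishes rationally for $2\leq\ell\leq r$. Since $a_{\ell}\in\mathbb{Z}\subset\mathbb{Q}$, it follows that $a_{\ell}=0$ integrally. Hence $\vartheta_{3}\simeq\ast$ integrally, and substituting into Lemma~\ref{lem:vartheta_decomp} gives $\vartheta\simeq i_{1}\circ\delta+i_{2}\circ f_{E}$.

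For the localised statement, $\delta$ represents a class in $\pi_{n+2k-2}(S^{n-1})\cong\pi^{S}_{2k-1}$, which is finite, so its order is divisible only by primes dividing the order of $\pi^{S}_{2k-1}$. After localising away from those primes this group vanishes, so $\delta\simeq\ast$ and therefore $i_{1}\circ\delta\simeq\ast$. Localisation commutes with the wedge and the decomposition, leaving $\vartheta\simeq i_{2}\circ f_{E}$.

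The main obstacle is the passage from rational to integral. One must check that the rationalised $\vartheta_{3}$ really is $\sum a_{i}[i_{1},\overline{s}_{i}]$ with the same integers. One must also check that the rational Whitehead products $[i_{1},\overline{s}_{i}]$ used in Lemma~\ref{lem:varthetaWhiteheadeqn} correspond to the integral basis of $\pi_{n+2k-2}(\Omega S^{n-1}\ast\Omega\overline{E})$. This holds because that space is $(n+2k-3)$-connected with free bottom homotopy group. The Hurewicz theorem identifies this group with $H_{n+2k-2}$, and both rationalisation and the Whitehead product map commute with it.
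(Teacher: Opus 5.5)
Your proposal is correct and follows the paper's argument. You write $\vartheta$ as in Lemma~\ref{lem:vartheta_decomp}, kill the $a_i$ with Lemma~\ref{lem:psi_eqn4}, and then kill $\delta$ by localising. Your extra remark, that the $a_i$ are integer coordinates in the free group $\pi_{n+2k-2}(\Omega S^{n-1}\ast\Omega\overline{E})\cong\oplus_{i=2}^{r}\mathbb{Z}$ so vanishing over $\mathbb{Q}$ forces vanishing over $\mathbb{Z}$, makes explicit the rational-to-integral step that the paper leaves implicit.
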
 

\begin{proof} 
By Lemma~\ref{lem:vartheta_decomp}, 
\[
    \vartheta\simeq i_{1}\circ\delta + i_{2}\circ f_{E} + \sum_{i=2}^{r} a_{i}\cdot [i_{1},\overline{s}_{i}]
\] 
where $\delta$ represents a torsion class in $\pi^S_{2k-1}$. By Lemma~\ref{lem:psi_eqn4}, each coefficient $a_{i}$ for $2\leq i\leq r$ is equal to zero. Thus $\vartheta\simeq i_{1}\circ\delta + i_{2}\circ f_{E}$ and the local statement immediately follows. 
\end{proof} 

\begin{rem} 
    It would be interesting to know how much of Hypothesis \ref{hyp} is really necessary, or if there is a different argument that results in Theorem~\ref{thm:varthetaclass} holding for a more general class of Poincar\'{e} Duality complexes. Hypothesis~\ref{hyp} had two components; one minor and one major. The minor of the two was the dimension and connectivity relation $n>4k$, which we used in each of Lemmas~\ref{lem:psi_eqn1} through~\ref{lem:ellWhitehead}. The major part is the demand for basis elements to square trivially in cohomology. As noted in Remark \ref{rem:hypmaps}, this provides the factorisation of the representing maps \(\chi_{i}':\overline{M} \rightarrow BS^{2k-1}\) through \(\chi_i:\overline{M} \rightarrow S^{2k}\). This was really used only in Lemma~\ref{lem:ellWhitehead} to ensure that the iterated Whitehead product $[[\iota_{2k},\iota_{n-2k}],\iota_{2k}]$ is nontrivial. Replacing $S^{2k}\vee S^{n-2k}$ with $BS^{2k-1}\vee S^{n-2k}$ and $\iota_{2k}$ with the inclusion $j$ of $S^{2k}$ into $BS^{2k-1}$, the triple Whitehead product $[[j,\iota_{n-2k}],j]$ is null homotopic, and then the argument for Lemma~~\ref{lem:psi_eqn4} breaks down.
\end{rem} 

The point in identifying $\vartheta$ in Theorem~\ref{thm:varthetaclass} is to give information about the map $\gamma$ in Proposition~\ref{prop:betagamma}. 

\begin{prop} \label{prop:varthetagamma} 
    Suppose that \(H_{2k}(M)\cong\oplus_{i=1}^r\Z\) for some $r\geq 1$. If \(r>1\), also assume that Hypothesis~\ref{hyp} holds. 
    Then there is a homotopy pushout 
    \[\begin{tikzcd}[row sep=3em, column sep=2.5em]
            S^{n+2k-2} \arrow[r, "j"] \arrow[d, "f_E"] & S^{n-1}\amsrtimes S^{2k-1}\arrow[r, "t_{\delta}"] & S^{n-1}\amsrtimes S^{2k-1} \arrow[d, "\gamma"] \\
            \overline{E} \arrow[rr, "\kappa"] & &  \widehat{E}.
        \end{tikzcd}\]
\end{prop}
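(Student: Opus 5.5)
The plan is to realise $\widehat{E}$ as the homotopy pushout via its wedge decomposition. I would then check that the square in the statement homotopy commutes and that the induced map of homotopy cofibres is an equivalence.

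First, combine the identifications of $\vartheta$. If $r=1$, use Lemma~\ref{lem:vartheta_decomp1} together with Lemma~\ref{lem:M1gen}; if $r>1$, use Theorem~\ref{thm:varthetaclass}. In both cases this gives, integrally,
\[
  \gamma\circ j\simeq e\circ\vartheta\simeq e\circ(i_{1}\circ\delta+i_{2}\circ f_{E})\simeq \gamma\circ i\circ\delta+\kappa\circ f_{E},
\]
where $e=(\gamma\circ i)\perp\kappa$ by Proposition~\ref{prop:Ehatproperty3}, and $\delta$ represents a torsion class in $\pi^{S}_{2k-1}$.

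The main step is to absorb the term $\gamma\circ i\circ\delta$ into $\gamma$ by means of $t_{\delta}$. By definition $t_{\delta}=1-\epsilon_{\delta}$ with $\epsilon_{\delta}=i\circ\delta\circ q$, using the co-$H$-structure on $S^{n-1}\amsrtimes S^{2k-1}$. Since $j$ is a co-$H$-map (Lemma~\ref{lem:spherehalfsmash}(iii)), composition with $j$ distributes over the sum, and $q\circ j\simeq 1$ by Lemma~\ref{lem:spherehalfsmash}(ii). Hence
\[
  \gamma\circ t_{\delta}\circ j\simeq \gamma\circ j-\gamma\circ i\circ\delta\simeq \kappa\circ f_{E}.
\]
This shows the square commutes. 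The delicate point is the ordering and commutativity of the sums in $\pi_{n+2k-2}(\widehat{E})$. These are abelian groups since $n+2k-2\geq 2$, so reordering is fine. Right distributivity $(a-b)\circ j\simeq a\circ j-b\circ j$ is exactly what the co-$H$ property of $j$ supplies. If necessary, I would replace $\delta$ by $-\delta$ to match sign conventions; the statement only asserts existence for some $\delta$.

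Next, verify that the square is a homotopy pushout. By Lemma~\ref{lem:spherehalfsmash}(i) and the fact that $t_{\delta}$ is a homotopy equivalence (Lemma~\ref{tdeltaequiv}), the homotopy cofibre of $t_{\delta}\circ j$ is $S^{n-1}$, with cofibre map $\pi\circ t_{\delta}^{-1}$. It therefore suffices to show that the induced map of cofibres $S^{n-1}\to\widehat{E}/\kappa(\overline{E})$ is an equivalence. By Proposition~\ref{prop:Ehatproperty3}, $\widehat{E}\simeq S^{n-1}\vee\overline{E}$ with $\kappa$ the inclusion of the second summand, so the cofibre of $\kappa$ is $S^{n-1}$. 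The induced map is determined by $\gamma\circ t_{\delta}\circ i$. Since $t_{\delta}\circ i\simeq i$ (as $q\circ i\simeq\ast$), this composite is $\gamma\circ i$, which is the first summand of $e$. Thus the map on cofibres is the identity on $S^{n-1}$ in homology.

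To conclude, I would argue more concretely rather than through cofibres. Form the homotopy pushout $P$ of $f_{E}$ and $t_{\delta}\circ j$; it receives a canonical map to $\widehat{E}$. Using the equivalence $i\perp j$ of Lemma~\ref{lem:spherehalfsmash}(iv), $P\simeq S^{n-1}\vee\overline{E}$, and the map $P\to\widehat{E}$ restricts to $\gamma\circ i$ on $S^{n-1}$ and to $\kappa$ on $\overline{E}$, so it is $e$. The remaining obstacle is carefully identifying the map on the $S^{n-1}$ summand after twisting by $t_{\delta}$. I expect to handle it by the homology argument above together with Whitehead's theorem, since all spaces are simply connected.
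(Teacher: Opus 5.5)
Your proposal is correct and follows essentially the paper's proof. You use $\vartheta\simeq i_{1}\circ\delta+i_{2}\circ f_{E}$, together with the co-$H$-property of $j$ and $q\circ j\simeq 1$, to get $\gamma\circ t_{\delta}\circ j\simeq\kappa\circ f_{E}$, and then check via $\gamma\circ i$ that the induced map of horizontal cofibres $S^{n-1}\to S^{n-1}$ is an equivalence; your identification of the homotopy pushout with $S^{n-1}\vee\overline{E}$ mapping to $\widehat{E}$ by $e$ is a more explicit form of the paper's degree-one argument. For $r=1$, citing Lemma~\ref{lem:vartheta_decomp1} with Lemma~\ref{lem:M1gen} is actually the more accurate integral input, since Proposition~\ref{prop:M1genid} as stated only holds after localisation.
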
 

\begin{proof} 
By definition, $\vartheta$ is the composite 
\[
    S^{n+2k-2} \xlongrightarrow{j} S^{n-1}\amsrtimes S^{2k-1} \xlongrightarrow{\gamma} \widehat{E} \xlongrightarrow{e^{-1}} S^{n-1}\vee\overline{E}.
\] 
By Proposition~\ref{prop:M1genid} if $r=1$ or by Theorem~\ref{thm:varthetaclass} if $r>1$, $\vartheta\simeq i_{1}\circ\delta + i_{2}\circ f_{E}$. Thus 
\[
    e^{-1}\circ\gamma\circ j\simeq i_{1}\circ\delta+i_{2}\circ f_{E}.
\] 
We will rewrite the term $i_{1}\circ\delta$. By definition of the homotopy equivalence $e$ in 
Proposition~\ref{prop:Ehatproperty3}, the restriction of $e^{-1}\circ\gamma$ to $S^{n-1}$ is the identity map, implying that $i_{1}\simeq e^{-1}\circ\gamma\circ i$. As $q$ is a right homotopy inverse for $j$, we have $\delta\simeq\delta\circ q\circ j$. Thus $i_{1}\circ\delta\simeq e^{-1}\circ\gamma\circ i\circ\delta\circ q\circ j$, giving
\[
    e^{-1}\circ\gamma\circ j\simeq (e^{-1}\circ\gamma\circ i\circ\delta\circ q\circ j)+i_{2}\circ f_{E}.
\] 
By assumption, $2k<n$, so by Lemma~\ref{lem:spherehalfsmash} $j$ is a co-$H$-map. Right distributivity holds for co-$H$-maps and left distributivity holds for all maps, so rearranging and distributing we obtain 
\[
    i_{2}\circ f_{E}\simeq (e^{-1}\circ\gamma\circ j)-(e^{-1}\circ\gamma\circ i\circ\delta\circ q\circ j)\simeq e^{-1}\circ\gamma\circ (1-i\circ\delta\circ q)\circ j.
\] 
By definitions~(\ref{def:epsilondelta}) and~(\ref{def:tdelta}), $\epsilon_{\delta}=i\circ\delta\circ q$ and $t_{\delta}=1-\epsilon_{\delta}$. Thus 
\[
    i_{2}\circ f_{E}\simeq e^{-1}\circ\gamma\circ t_{\delta}\circ j.
\] 
Now pre-compose with the homotopy equivalence $e$ and recall that $\kappa=e\circ i_{2}$; we obtain 
\[
    \kappa\circ f_{E}\simeq\gamma\circ t_{\delta}\circ j.
\]
This proves the homotopy commutativity of the diagram in the statement of the proposition. 

To show that that diagram is a homotopy pushout, consider the horizontal cofibres. Since $t_{\delta}$ is a homotopy equivalence, the homotopy cofibre of $t_{\delta}\circ j$ has the same homotopy type as the homotopy cofibre of $j$, which is $S^{n-1}$. Since $\kappa=e\circ i_{2}$ and $e$ is a homotopy equivalence, the homotopy cofibre of $\kappa$ has the same homotopy type as the homotopy cofibre of $i_{1}$, which again is $S^{n-1}$. Thus there is a homotopy cofibration diagram 
\[
    \begin{tikzcd}[row sep=3em, column sep=2.5em] 
        S^{n+2k-2} \arrow[r, "t_{\delta}\circ j"] \arrow[d, "f_E"] & S^{n-1}\amsrtimes S^{2k-1}\arrow[r]\arrow[d, "\gamma"] & S^{n-1}\arrow[d, "r"] \\
        \overline{E} \arrow[r, "\kappa"] &  \widehat{E}\arrow[r] & S^{n-1} 
    \end{tikzcd}
\] 
for some induced map of cofibres $r$. The homotopy commutativity of the right square implies that the degree of $r$ is the same as the degree of $\gamma$ in $H_{n-1}$, which by definition of $\gamma$ is degree $1$. Thus $r$ is homotopic to the identity map, implying that the left square is a homotopy pushout. 
\end{proof}

\section{Proof of the Main Theorem}
\label{sec:results}

Let us briefly recall the set-up. Let \(M\) be a \((2k-1)\)-connected \(n\)-dimensional Poincar\'{e} Duality complex with \(\overline{M}\) not contractible, and suppose that there is a homotopy fibration 
\[
    S^{2k-1}\xlongrightarrow{\alpha} E \longrightarrow M
\] 
where $\alpha$ is null homotopic (either integrally for \(k\in\lbrace1,2,4\rbrace\) or after localising away from $2$). Let \(N\) be a simply-connected \(n\)-dimensional Poincar\'{e} Duality complex and form the connected sum \(M\# N\). Using the natural collapsing map \(p:M\#N\rightarrow M\) there is a homotopy pullback diagram  
\begin{equation*}
    \begin{tikzcd}[row sep=3em, column sep = 3em]
        S^{2k-1} \arrow[d, equal] \arrow[r, "\alpha_N"] & E_N \arrow[r] \arrow[d] & M\#N \arrow[d, "p"] \\
        S^{2k-1} \arrow[r, "\alpha"] & E \arrow[r] & M 
    \end{tikzcd}
\end{equation*} 
that defines the space $E_{N}$ and the map $\alpha_{N}$. We now restate and prove our main result. 

\begin{thm}\label{thm:main}
    Suppose that \(H_{2k}(M)\cong\oplus_{i=1}^r\Z\). If \(r>1\), also assume that Hypothesis \ref{hyp} holds. 
    Then: 
    \begin{itemize} 
       \item[(i)] there is a homotopy equivalence \(E_{N}\simeq E\conn\mathcal{G}^{2k}_{t_{\delta}}(N)\) for some $\delta\in\pi^{S}_{2k-1}$; 
       \item[(ii)] $\alpha_{N}$ is null homotopic; 
       \item[(iii)] there is a homotopy equivalence  
        \(\Omega(M\conn N)\simeq S^{2k-1}\times\Omega(E\conn\mathcal{G}_{t_{\delta}}^{2k}(N))\). 
    \end{itemize} 
    Further, after localising away from all primes that divide the order of $\pi^{S}_{2k-1}$, there are homotopy equivalences $E_{N}\simeq E\conn\mathcal{G}^{2k}_{0}(N)$ and $\Omega(N\#M)\simeq S^{2k-1}\times\Omega(E\#\mathcal{G}_0^{2k}(N))$.
\end{thm}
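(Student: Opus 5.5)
The plan is to assemble $E_N$ as an iterated homotopy pushout and recognise the result as the connected sum pushout of Lemma~\ref{lem:connsumpo} for $E$ and $\mathcal{G}^{2k}_{t_\delta}(N)$.

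First, part~(ii) is already Lemma~\ref{lem:Ehatproperty2}, since $H_{2k}(M)\neq 0$ forces $\overline{M}$ to be non-contractible. For part~(i), start from the left homotopy pushout of Proposition~\ref{prop:betagamma}:
\[
\widehat{E}\cup_{\gamma}\bigl(\overline{N}\amsrtimes S^{2k-1}\bigr)\simeq E_N,
\]
where $\overline{N}\amsrtimes S^{2k-1}$ is attached along $f_N\amsrtimes 1$. Since $t_\delta$ is a self-equivalence (Lemma~\ref{tdeltaequiv}) and $t_\delta$ is natural enough to commute with $f_N\amsrtimes 1$ (the map $\epsilon_\delta=i\circ\delta\circ q$ and the co-$H$ structure are preserved by $f_N\amsrtimes 1$ up to homotopy), I will precompose: replace $\gamma$ by $\gamma\circ t_\delta$ and $f_N\amsrtimes 1$ by $(f_N\amsrtimes 1)\circ t_\delta$. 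This does not change the pushout.

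Next, use Proposition~\ref{prop:varthetagamma}. The square with $f_E$, $\kappa$, $\gamma\circ t_\delta\circ j$ is a homotopy pushout. Proposition~\ref{prop:Ehatproperty3} gives $\widehat{E}\simeq S^{n-1}\vee\overline{E}$, with $\gamma\circ t_\delta$ restricting to the inclusion of $S^{n-1}$ on the bottom sphere. I will then form the cube whose top face is the pushout for $\widehat{E}$ from Proposition~\ref{prop:varthetagamma}, and glue in the $\overline{N}\amsrtimes S^{2k-1}$ pushout.

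Pasting pushouts gives the following chain:
\begin{align*}
E_N&\simeq \overline{E}\cup_{f_E}\text{(cone-like piece)}\\
&\simeq \text{hocolim}\bigl(\overline{E}\xleftarrow{f_E}S^{n+2k-2}\xrightarrow{\varphi_{t_\delta}}\overline{N}\amsrtimes S^{2k-1}\bigr).
\end{align*}
Concretely, the composite pushout of $S^{n-1}\amsrtimes S^{2k-1}\to\overline{N}\amsrtimes S^{2k-1}$ along $\gamma t_\delta$, with $\gamma t_\delta$ itself a pushout of $f_E$ along $t_\delta j$, equals by pasting the pushout of $f_E$ and $(f_N\amsrtimes1)\circ t_\delta\circ j=\varphi_{t_\delta}$. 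By Lemma~\ref{lem:gyrationskel}, $\varphi_{t_\delta}$ attaches the top cell of $\mathcal{G}^{2k}_{t_\delta}(N)$, whose dimension is $n+2k-1=\dim E$. So this pushout is exactly Lemma~\ref{lem:connsumpo} for $E\conn\mathcal{G}^{2k}_{t_\delta}(N)$. The main obstacle is the bookkeeping in this pasting: making sure that the pushout square of Proposition~\ref{prop:varthetagamma} and the square from Proposition~\ref{prop:betagamma} use the same $\gamma$ (and the twist $t_\delta$ consistently). If the naturality of $t_\delta$ is awkward, I will instead absorb $t_\delta$ into the gyration by defining the gyration with the equivalent twist on the other side.

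For part~(iii), part~(ii) gives a homotopy fibration $S^{2k-1}\to E_N\to M\conn N$ whose fibre inclusion is null. Hence $\Omega(M\conn N)\simeq S^{2k-1}\times\Omega E_N$, and substituting part~(i) gives the claim. Finally, after localising away from primes dividing $|\pi^S_{2k-1}|$, the class $\delta$ is trivial, so $t_\delta\simeq 1$ and $\mathcal{G}^{2k}_{t_\delta}(N)\simeq\mathcal{G}^{2k}_0(N)$. The local statements then follow from the same argument, using the local forms of Proposition~\ref{prop:M1genid} and Theorem~\ref{thm:varthetaclass}.
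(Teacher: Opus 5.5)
Your proposal is correct and is essentially the paper's proof: paste the homotopy pushout of Proposition~\ref{prop:varthetagamma} with the left square of Proposition~\ref{prop:betagamma}, recognise the top composite $(f_N\amsrtimes 1)\circ t_\delta\circ j$ as $\varphi_{t_\delta}$ via Lemma~\ref{lem:gyrationskel}, conclude with Lemma~\ref{lem:connsumpo}, and obtain (ii), (iii) and the local statements exactly as the paper does. The one superfluous step is precomposing $\gamma$ and $f_N\amsrtimes 1$ with $t_\delta$, since the two squares already share the edge $\gamma$ and paste directly; the ``naturality'' you invoke for it is not justified in general (there is no evident analogue of $t_\delta$ on $\overline{N}\amsrtimes S^{2k-1}$, which need not be a co-$H$-space), but the replacement is harmless anyway because precomposing both legs of a span by a self-equivalence leaves the homotopy pushout unchanged.
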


\begin{proof} 
    The map $\alpha_N$ was shown to be null homotopic in Lemma~\ref{lem:Ehatproperty2}. Consequently, from the homotopy fibration 
    \(S^{2k-1}\xlongrightarrow{\alpha_N} E_N\longrightarrow M\conn N\) 
    we obtain a homotopy equivalence  
    \begin{equation}\label{loopMdecomp1temp} 
    \Omega(M\conn N)\simeq S^{2k-1}\times\Omega E_N. 
    \end{equation} 
    
    Next, consider the diagram
    \begin{equation}\label{dgm:main_rectangletemp}
        \begin{tikzcd}[row sep=3em, column sep=2.5em]
            S^{n+2k-2} \arrow[r, "j"] \arrow[d, "f_E"] & S^{n-1}\amsrtimes S^{2k-1}\arrow[r,"t_{\delta}"] & S^{n-1}\amsrtimes S^{2k-1} \arrow[r, "f_N\amsrtimes 1"] \arrow[d, "\gamma"] & \overline{N}\amsrtimes S^{2k-1} \arrow[d, "\beta"] \\
            \overline{E} \arrow[rr, "\kappa"] & & \widehat{E} \arrow[r] & E_N.
        \end{tikzcd}
    \end{equation} 
    The right-hand square is a homotopy pushout by Proposition \ref{prop:betagamma} and the left-hand square is a homotopy pushout by Proposition~\ref{prop:varthetagamma}. Thus the outer rectangle is a homotopy pushout as well. By Lemma~\ref{lem:gyrationskel}, the composite along the top of (\ref{dgm:main_rectangletemp}) is the attaching map of the top cell of the gyration $\mathcal{G}_{t_{\delta}}^{2k}(N)$. Therefore Lemma \ref{lem:connsumpo} implies that \(E_N\) has the homotopy type of the connected sum \(E\conn\mathcal{G}^{2k}_{t_{\delta}}(N)\). Consequently, the homotopy equivalence~(\ref{loopMdecomp1temp}) refines to $\Omega(M\conn N)\simeq S^{2k-1}\times\Omega(E\conn\mathcal{G}_{t_{\delta}}^{2k}(N)).$ 
    
    Now localise away from primes that divide the order of $\pi^{S}_{2k-1}$. As $\delta\in\pi^{S}_{2k-1}$ it follows that $\delta$ is null homotopic. By their definitions, $t_{\delta}=1-\epsilon_{\delta}$ where $\epsilon_{\delta}= i\circ\delta\circ q$. Thus $\epsilon_{\delta}$ is null homotopic, implying that $t_{\delta}$ is homotopic to the identity map. Therefore the composite along the top of (\ref{dgm:main_rectangletemp}) is the attaching map of the top cell of the gyration $\mathcal{G}_{0}^{2k}(N)$. Arguing as before then shows that $E_{N}\simeq E\conn\mathcal{G}^{2k}_{0}(N)$ and $\Omega(M\conn N)\simeq S^{2k-1}\times\Omega(E\conn\mathcal{G}_{0}^{2k}(N)).$
\end{proof} 

\section{Applications}\label{sec:applications}

This section gives three applications of the Main Theorem. The first two are both situations in which the least dimensional non-trivial homology group of \(M\) is of rank one, and the third involves simply-connected $5$-dimensional Poincar\'{e} Duality complexes. In particular, the third example requires no additional assumptions in order for Hypothesis \ref{hyp} hold. 

\subsection{Pre-Quantum Line Bundles}\label{subsec:prequantum}
\hfill
\vspace{5pt}

Let \((X,\omega)\) be a symplectic manifold over which there is a \(U(1)\)-bundle \(L\) with connection \(\nabla_L\). The bundle \(L\) is called a \textit{pre-quantum line bundle} if \(\omega\) is equal to the curvature 2-from of \(\nabla_L\) (see for example \cite{morava_SymplecticCobordism}). Theorem~\ref{mainthm} will be applied to a concrete instance of a pre-quantum line bundle from \cite{HJSX} (see also \cite{js}*{Theorem 4.1}) when localised away from $2$. Atiyah and Bott \cite{atiyah-bott} constructed a simply-connected 6-manifold \(A\) with a symplectic structure \(\omega_A\) and an associated pre-quantum line bundle 
\begin{equation}\label{eq:preQ1}
    S^1 \longrightarrow L \longrightarrow A.
\end{equation}
The integral cohomology of \(A\) was calculated via Morse Theory:
\[
    H^i(A)\cong\begin{cases}\Z\text{ if }i=0,2,4,6\\ \Z^4\text{ if }i=3\\ 0\text{ else.}\end{cases}
\]
A result of Wall \cite{wall6mfld}*{Theorem 1} implies that there is a diffeomorphism
\[
    A\cong A'\#(S^3\times S^3)^{\#2}
\]
where \(M^{\#k}\) denotes a connected sum of \(k\) copies of some \(M\), and \(A'\) is simply-connected $6$-manifold with 
\[
    H^i(A')\cong\begin{cases}\Z\text{ if }i=0,2,4,6\\ 0\text{ else}\end{cases}
\]
In \cite{HJSX}*{Theorem 6.3} it is shown that there is a homotopy fibration
\begin{equation}\label{eq:preQ2}
    S^1 \longrightarrow E \longrightarrow A'
\end{equation}
in which the integral cohomology of the total space \(E\) is:
\[
    H^i(E)\cong\begin{cases}\Z\text{ if }i=0,7\\ \Z/4\Z\text{ if }i=4\\ 0\text{ else.}\end{cases}
\]
The homotopy fibrations (\ref{eq:preQ1}) and (\ref{eq:preQ2}) combine to give a homotopy fibration diagram
\begin{equation*}
    \begin{tikzcd}[row sep=3em, column sep = 3em]
        S^1 \arrow[d, equal] \arrow[r] & L \arrow[r] \arrow[d] & A \arrow[d, "p"] \\
        S^1 \arrow[r] & E \arrow[r] & A' 
    \end{tikzcd}
\end{equation*}
where the map \(p:A\rightarrow A'\) is the standard collapsing map from the connected sum. Both \(E\) and \(L\) are simply-connected, so Theorem~\ref{mainthm} applies.

\begin{thm}\label{thm:preQ} 
   There is a homotopy equivalence $L\simeq E\#\mathcal{G}^2_{t_{\delta}}((S^3\times S^3)^{\#2})$ 
   for some $\delta\in\pi^{S}_{1}$. Further, localised away from 2, there is a homotopy equivalence
    \[
        L\simeq E\#(S^3\times S^4)^{\#4}.
    \]
\end{thm}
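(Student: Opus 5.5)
The plan is to apply Theorem~\ref{thm:main} with $k=1$, $M=A'$, $N=(S^3\times S^3)^{\#2}$ and the homotopy fibration $S^1\to E\to A'$ of~(\ref{eq:preQ2}). Since $A\cong A'\#(S^3\times S^3)^{\#2}$ and $p$ is the collapse map, the pullback of $E\to A'$ along $p$ is $L$. This pullback is the space $E_N$ of~(\ref{introEN}), so the theorem will give $L\simeq E\#\mathcal{G}^2_{t_\delta}((S^3\times S^3)^{\#2})$ with $\delta\in\pi^S_1$.

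First I check the hypotheses. The space $A'$ is a simply connected ($=(2k-1)$-connected for $k=1$) $6$-dimensional Poincar\'e Duality complex, and $n=6\ge 4$. We have $H_2(A')\cong\mathbb{Z}$, which is torsion-free because $H^3(A')=0$ and the Universal Coefficient Theorem applies, so $r=1$ and Hypothesis~\ref{hyp} is not needed. The $2$-skeleton of $A'$ is $S^2$, so $\overline{A'}$ is not contractible. It remains to show that the fibre inclusion $S^1\to E$ is null homotopic. This holds because $E$ is simply connected: a map $S^1\to E$ is null exactly when it is zero on $\pi_1$, and $\pi_1(E)=0$. The complex $N$ is simply connected of dimension $6$. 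This gives the first assertion.

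For the localised statement, $\pi^S_1\cong\mathbb{Z}/2$, so away from $2$ the final part of Theorem~\ref{thm:main} gives $L\simeq E\#\mathcal{G}^2_0((S^3\times S^3)^{\#2})$. It then remains to identify this gyration. I would use the known formula $\mathcal{G}^k_0(N_1\#N_2)\simeq\mathcal{G}^k_0(N_1)\#\mathcal{G}^k_0(N_2)$ together with $\mathcal{G}^2_0(S^3\times S^3)\simeq(S^3\times S^4)^{\#2}$. Both can be obtained from the skeleton and attaching-map description in Lemma~\ref{lem:gyrationskel}. For $N=(S^3\times S^3)^{\#2}$ we have $\overline{N}\simeq\bigvee_4 S^3$, so
\[
\overline{N}\rtimes S^1\simeq \overline{N}\vee(\overline{N}\wedge S^1)\simeq\bigvee_4 S^3\vee\bigvee_4 S^4 .
\]
The attaching map $\varphi_0=(f_N\rtimes 1)\circ j$ should be identified with the sum of the Whitehead products pairing each $S^3$ with its dual $S^4$. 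Here $f_N$ is a sum of Whitehead products $[a_i,b_i]$, and composing with $j$ (after suspending) turns each $[a_i,b_i]$ into $[a_i,\Sigma b_i]+[\Sigma a_i,b_i]$ on the summands. The result is the attaching map of $(S^3\times S^4)^{\#4}$, which gives $\mathcal{G}^2_0(N)\simeq(S^3\times S^4)^{\#4}$.

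The main obstacle I expect is this last identification of $\varphi_0$ as a sum of Whitehead products, which amounts to tracking how $j$ interacts with Whitehead products in the half-smash. If that proves difficult, I would instead cite the existing computation of trivial gyrations of connected sums of products of spheres from~\cite{ChenTher:gy_stab} or~\cite{huang_theriault_stability}.
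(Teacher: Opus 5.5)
Your proposal is correct and follows the paper's proof: apply the Main Theorem with $k=1$, $r=1$, $M=A'$ and $N=(S^3\times S^3)^{\#2}$, then away from $2$ identify $\mathcal{G}^2_0((S^3\times S^3)^{\#2})$ using exactly the two facts you name, which the paper takes from Theorems 4.1 and 5.8 of \cite{chenery:fico} (that is the right reference for your fallback, rather than the ones you suggest). Your hypothesis checks are more explicit than the paper's: torsion-freeness of $H_2(A')$, non-contractibility of $\overline{A'}$, and nullity of $S^1\to E$ from $\pi_1(E)=0$. The direct Whitehead-product computation of $\varphi_0$ is an optional extra that the paper avoids by citing these results.
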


\begin{proof}
    As \(H_2(A')\cong\Z\), \(k=1\) and \(\pi_1^S\cong \Z/2\Z\), by {the Main Theorem} there is a homotopy equivalence
    \[ 
         L\simeq E\#\mathcal{G}^2_{t_{\delta}}((S^3\times S^3)^{\#2})  
     \] 
    and localised away from $2$ there is a homotopy equivalence 
    \[
        L\simeq E\#\mathcal{G}^2_0((S^3\times S^3)^{\#2}). 
    \] 
    The latter can be refined. By \cite{chenery:fico}*{Theorem 4.1} there is a homotopy equivalence \(\mathcal{G}^2_0((S^3\times S^3)^{\#2})\simeq\mathcal{G}^2_0(S^3\times S^3)^{\#2}\), and by \cite{chenery:fico}*{Theorem 5.8} there is a homotopy equivalence \(\mathcal{G}^2_0(S^3\times S^3)\simeq(S^3\times S^4)^{\#2}\). Thus, localised away from $2$, $L\simeq E\#\mathcal{G}^2_0((S^3\times S^3)^{\#2})$.
\end{proof}

\subsection{Rational Projective Spaces}\label{subsec:rationalproj}
\hfill
\vspace{5pt}

In this subsection all spaces and maps are rationalised. In this case, \(S^{2k-1}\) is an Eilenberg-MacLane space, and so has a classifying space \(BS^{2k-1}\) whose rational cohomology is 
\[
    H^*(BS^{2k-1};\Q)\cong\Q[x]\text{\; where \;} |x|=2k.
\] 
There is a canonical filtration of $BS^{2k-1}$ arising from the fact that it is rationally homotopy equivalent to $\Omega S^{2k+1}$. Integrally, there is an algebra isomorphism $H_{\ast}(\Omega S^{2n+1})\cong\mathbb{Z}[y]$ for $\vert y\vert= 2k$. For \mbox{$n\geq 1$}, let $\mathbb{P}_{n}$ be the $2nk$-skeleton of $\Omega S^{2n+1}$, so there is an algebra isomorphism $H_{\ast}(\mathbb{P}_n) \cong \mathbb{Z}[y]/\langle y^{n+1}\rangle$. Rationally, the truncated polynomial algebra $\mathbb{Q}[y]\langle y^{n+1}\rangle$ is self-dual. So if $x$ is the rational dual of $y$, then $H^{\ast}(\mathbb{P}_n;\Q)\cong\mathbb{Q}[x]/\langle x^{n+1}\rangle$. Note that this algebra isomorphism implies that $\mathbb{P}_{n}$ is (rationally) a Poincar\'{e} Duality complex.
  
The map \(\mathbb{P}_n\rightarrow BS^{2k-1}\) representing the generator \(x\) induces a homotopy fibration 
\[
    S^{2k-1} \longrightarrow \mathbb{S}_n \longrightarrow \mathbb{P}_n.
\]
that defines the space \(\mathbb{S}_n\). This generalises the $k=1$ case when \(\mathbb{P}_n\simeq\C P^n\) and \(\mathbb{S}_n\simeq S^{2n+1}\). In general, a Serre spectral sequence reveals that 
\[
    H_\ast(\mathbb{S}_n;\Q)\cong\begin{cases}\Q\text{ if }\ast=2k(n+1)-1\\ 0\text{ else}\end{cases}
\]
which implies that \(\mathbb{S}_n\) is a rational homology sphere. Simply-connected rational homology spheres are rational homotopy spheres, so there is a rational homotopy equivalence \(\mathbb{S}_n \simeq S^{2k(n+1)-1}\). 

Now consider a \(2kn\)-dimensional Poincar\'e Duality complex \(N\) and take the connected sum \(N\#\mathbb{P}_n\). Let $E_{N}$ be the homotopy pullback of \(p:N\conn\mathbb{P}_n \rightarrow \mathbb{P}_n\) and \(S^{2k(n+1)-1} \rightarrow \mathbb{P}_n\). Since \(H_{2k}(\mathbb{P}_n;\Q)\cong\mathbb{Q}\), by the Main Theorem there is a homotopy equivalence $E_{N}\simeq S^{2k(n+1)-1}\conn\mathcal{G}_{0}^{2k}(N)$. In general, taking the connected sum with a sphere does not change the homotopy type, so we obtain a homotopy fibration diagram  
\begin{equation*}
    \begin{tikzcd}[row sep=3em, column sep = 3em]
        S^{2k-1} \arrow[d, equal] \arrow[r] & \mathcal{G}_0^{2k}(N) \arrow[r] \arrow[d] & N\#\mathbb{P}_n \arrow[d, "p"] \\
        S^{2k-1} \arrow[r] & S^{2k(n+1)-1} \arrow[r] & \mathbb{P}_n.
    \end{tikzcd}
\end{equation*} 
By Lemma~\ref{lem:Ehatproperty2}, the map \(S^{2k-1} \rightarrow \mathcal{G}_{0}^{2k}(N)\) 
is null homotopic, implying that there is a homotopy equivalence \(\Omega(N\#\mathbb{P}_n)\simeq S^{2k-1}\times\Omega(\mathcal{G}_0^{2k}(N))\). By~\cite{huangtheriault}, 
\[
    \Omega(\mathcal{G}_0^{2k}(N))\simeq\Omega\overline{N}\times\Omega\Sigma^{2k} H
\] 
where $\overline{N}$ is the $(2kn-1)$-skeleton of $N$ and $H$ is the homotopy fibre of the attaching map \(S^{2kn-1} \rightarrow \overline{N}\) for the top cell of $N$. Thus we obtain a homotopy equivalence 
\[
    \Omega(N\conn\mathbb{P}_n)\simeq S^{2k-1}\times\Omega\overline{N}\times\Omega(\Sigma^{2k} H).
\]

\subsection{5-dimensional Poincar\'{e} Duality complexes}\label{subsec:5dim}
\hfill
\vspace{5pt}

Let \(M\) be a simply-connected 5-dimensional Poincar\'e Duality complex with \(H_2(M)\cong\oplus_{i=1}^{r}\mathbb{Z}\) for some $r\geq 1$. Suppose there exists a homotopy fibration 
\begin{equation}\label{eq:5dim}
    S^1 \xlongrightarrow{\alpha} E \longrightarrow M 
\end{equation}
where $\alpha$ is null homotopic. Since $H^{4}(M;\mathbb{Q})\cong 0$, all squares of degree $2$ 
basis elements in $H^{2}(M;\mathbb{Q})$ are zero, implying that Hypothesis \ref{hyp} holds. 

Apply the Main Theorem with \(N\) some other simply-connected 5-dimensional Poincar\'e Duality complex. As $k=1$ and $\pi_{1}^{S}\cong\mathbb{Z}/2\mathbb{Z}$, there is a homotopy equivalence 
$E_{N}\simeq E\conn\mathcal{G}^{2}_{t_{\delta}}(N)$ for some $\delta\in\pi^{S}_{1}$. After 
localising away from $2$, there is a homotopy fibration 
\[ 
    S^1 \xlongrightarrow{\alpha_N} E \conn \mathcal{G}_0^2(N) \longrightarrow M \conn N   
\] 
where $\alpha_{N}$ is null homotopic and a homotopy equivalence 
$\Omega(M\conn N)\simeq S^{1}\times\Omega(E\conn\mathcal{G}_{0}^{2}(N))$.

\appendix
\section{The Cube Lemma}\label{appendix} 

Here we prove a version of the classical result known as Mather's Cube Lemma (cf. \cite{mather}) that works with juxtapositions of cubes.  As terminology, a pushout is a strict (that is, point-set) 
pushout and similarly for a pullback. These are distinguished from homotopy pushouts 
and homotopy pullbacks. 

\begin{thm} \label{strictcube} 
    Suppose that there is a diagram of iterated pushouts 
    \begin{equation*}
        \begin{tikzcd}[row sep=3em, column sep = 3em]
            A' \arrow[d, "\overline{g}"] \arrow[r, "f'"] & A \arrow[r, "f"] \arrow[d] & B \arrow[d] \\
            C' \arrow[r] & C \arrow[r] & D
        \end{tikzcd}
    \end{equation*}   
    where $f'$, $f$ and $\overline{g}$ are subspace inclusions and suppose that there is a epimorphism \(h:H \rightarrow D\). Then there is a commutative cube 
    \begin{equation*}
        \begin{tikzcd}[row sep=1em, column sep=1em]
            E' \arrow[rr] \arrow[dd] \arrow[dr] && E \arrow[rr] \arrow[dr] \arrow[dd] &&
            F \arrow[dd] \arrow[dr] \\
            & G' \arrow[rr, crossing over] && G \arrow[rr, crossing over] && H \arrow[dd, "h"] \\
            A' \arrow[rr] \arrow[dr] && A \arrow[rr] \arrow[dr] && B \arrow[dr]\\
            & C' \arrow[rr] \arrow[from=uu, crossing over] && C \arrow[rr] \arrow[from=uu, crossing over] && D
        \end{tikzcd}
    \end{equation*}
    where the sides are all pullbacks and the two top faces are pushouts. 
\end{thm}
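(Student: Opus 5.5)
The plan is to build the cube by pulling back along $h$ over every vertex of the bottom diagram. This is the standard strict form of Mather's construction. I will then check that the top faces are strict pushouts, using that pullback along a map is compatible with the relevant colimits of subspaces.

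\textbf{Construction.} Each of $A'$, $A$, $B$, $C'$, $C$ maps to $D$ via the structure maps of the iterated pushout. For each such space $X$, set
\[
X_H = X\times_D H = \{(x,y)\in X\times H : \phi_X(x)=h(y)\},
\]
so that $E'=A'_H$, $E=A_H$, $F=B_H$, $G'=C'_H$, $G=C_H$, and $H=D_H$. The maps between these spaces are induced by the bottom maps together with the identity on $H$. All faces then commute. Each side face is a pullback: for a map $X\to Y$ over $D$, the pasting lemma for pullbacks gives $X_H\cong X\times_Y Y_H$. Since $f'$, $f$ and $\overline{g}$ are subspace inclusions, their pulled-back counterparts are subspace inclusions as well, because $X_H$ sits inside $Y_H$ as a subspace of $Y\times H$.

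\textbf{Top faces are pushouts.} The key claim is the following. Consider a pushout $Q=Y\cup_X Z$ along an inclusion $X\hookrightarrow Y$, mapping to $D$. Then the canonical map
\[
Y_H\cup_{X_H} Z_H \longrightarrow Q_H
\]
is a homeomorphism. I will verify this in two steps. First, it is a bijection. As sets, $Q=Z\sqcup (Y\setminus X)$, and the fibre product commutes with this set-theoretic decomposition fibrewise over $H$. Second, its inverse is continuous. This is the main obstacle, since $-\times_D H$ need not preserve quotients in general. Here I plan to use that $Q_H$ is a subspace of $Q\times H$ and that $Q\times H$ is the quotient of $(Y\sqcup Z)\times H$. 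The standard way to obtain this is to work in a convenient category, such as compactly generated weak Hausdorff spaces, where $-\times H$ preserves quotient maps and hence colimits. Passing to the saturated subspace $\{(q,y): \phi(q)=h(y)\}$ then preserves the quotient, because it is the preimage of the closed diagonal condition; equivalently, pullback along $h$ is a left adjoint in a cartesian closed slice category. The hypothesis that $h$ is an epimorphism (surjective) ensures that no vertex becomes degenerate and that the pullbacks cover the whole diagram, as needed for the cube over $D$.

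\textbf{Conclusion.} Apply the claim first to the left square, $C=C'\cup_{A'}A$, giving that $G\cong G'\cup_{E'}E$ is a pushout. Then apply it to the right square, $D=C\cup_A B$, giving that $H=D_H\cong G\cup_E F$ is a pushout. Finally, I will check compatibility: the resulting iterated top diagram is exactly the top of the cube, since all of its maps are the induced ones. This completes the commutative cube with pullback sides and pushout top faces.
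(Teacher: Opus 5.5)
Your construction is the same as the paper's. Each top vertex is the pullback of $h$ over the corresponding bottom vertex, and the side faces are pullbacks by pasting. Where you differ is in how you show the top faces are pushouts.

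The paper uses Lemma~\ref{strictpb} to identify each pulled-back space with the preimage $h^{-1}(X)\subseteq H$, so all top maps become subspace inclusions into $H$. It then reads off $H=h^{-1}(B)\cup_{h^{-1}(A)}h^{-1}(C)$ directly from $D=B\cup_{A}C$. The left square is handled by repeating this with $H_{c}\rightarrow C$ in place of $h$, and the two cubes are glued along the common face. That is quick and stays in ordinary spaces, but it does not say why $H$ carries the pushout topology. This is automatic when $A$ is closed in $B$ and in $C$, since then $h^{-1}(B)$ and $h^{-1}(C)$ form a finite closed cover of $H$. The mapping-cylinder replacements in the proof of Theorem~\ref{homotopycube} provide exactly this.

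Your argument does justify the topology. It combines a bijection on underlying sets with two facts: $(Y\sqcup Z)\times H\rightarrow Q\times H$ is a quotient map in a convenient category, and $Q\times_{D}H$ is a closed saturated subspace of $Q\times H$. Defining everything uniformly as $X\times_{D}H$ also makes the compatibility of the two cubes automatic. The cost is working in CGWH, so that $-\times H$ preserves quotients and $D$ has closed diagonal. You also need the bottom pushouts to be computed on underlying sets, so that $Q=Z\sqcup(Y\setminus X)$. In CGWH this holds along closed inclusions but can fail in general, because colimits there may involve the weak Hausdorff reflection.

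For the same reason, drop the parenthetical claim that pullback along $h$ is a left adjoint on a cartesian closed slice; this is false in CGWH. For example, the CGWH quotient of $[0,1]^{2}$ by $(x,0)\sim(x,1)$ for $x>0$ is forced to identify $(0,0)$ with $(0,1)$ as well. Its fibre over $0$ under the first-coordinate projection is therefore a circle, whereas restricting to that fibre first and then taking the quotient gives $[0,1]$. Finally, surjectivity of $h$ plays no role in your argument, since $Q_{H}=Z_{H}\sqcup(Y_{H}\setminus X_{H})$ holds for any $h$. The sentence about degenerate vertices can be deleted.
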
 

\begin{rem} 
    Restricting to the right pushout in the hypotheses and the right cube as output gives Mather's Cube Lemma in the context of strictly commuting diagrams. Mather's Cube Lemma allows for homotopies: it states that if there is a homotopy commutative cube in which the bottom face is a homotopy pushout and the four sides are homotopy pullbacks then the top face is also a homotopy pushout. There is a technicality here. The homotopy commutativity of the cube does not simply mean that each of the six faces in the cube individually homotopy commutes, it also means there is a coherency condition between the homotopies; this is usually phrased now as saying the cube homotopy commutes in the sense of Mather. The coherency condition appears because Mather's statement is strong: given \textit{any} cube that homotopy commutes in the sense of Mather, if the bottom face is a homotopy pushout and the four sides are homotopy pullbacks then the top face is a homotopy pushout. What we are after is a weaker version where a homotopy commutative cube means only that all six sides homotopy commute: given a homotopy pushout and a map to the pushout, there is \textit{at least one instance} of a homotopy commutative cube in which the four sides are homotopy pullbacks and the top face is a homotopy pushout. The strict commutativity in Theorem~\ref{strictcube} is designed to realize an instance of a cube, which can then be used in the homotopy commutative case to produce an instance of a cube without worrying about coherency conditions (as stated in Theorem~\ref{homotopycube}). The construction hasthe added benefit of working with iterated pushouts as stated in Theorem~\ref{strictcube}. 
\end{rem} 

We first prove a preliminary lemma. 

\begin{lem}\label{strictpb} 
    Suppose that there is an iterated pullback diagram  
    \begin{equation*}
        \begin{tikzcd}[row sep=3em, column sep = 3em]
            X_{f\circ g} \arrow[r, "\overline{g}"] \arrow[d] & X_f \arrow[r, "f'"] \arrow[d] & X \arrow[d, "h"] \\
            T \arrow[r, "g"] & S \arrow[r, "f"] & Y
        \end{tikzcd}
    \end{equation*}      
    where $g$ and $f$ are subspace inclusions. Then there are homeomorphisms $X_{f\circ g}\cong h^{-1}(T)$ and $X_{f}\cong h^{-1}(S)$ with the property that $\overline{g}$ identifies with the inclusion of $h^{-1}(T)$ into $h^{-1}(S)$ and~$f'$ identifies with the inclusion of $h^{-1}(S)$ into $X$. 
\end{lem}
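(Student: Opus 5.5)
The plan is to use the standard explicit model of a strict pullback in the category of topological spaces. For a map $h:X\to Y$ and a subspace inclusion $f:S\to Y$, the pullback is
\[
P_f=\{(x,s)\in X\times S : h(x)=f(s)\},
\]
with the subspace topology from $X\times S$. The first step is to show that the map $\phi:h^{-1}(S)\to P_f$, given by $x\mapsto (x,h(x))$, is a homeomorphism. It is continuous because both coordinates are continuous; the second coordinate is continuous since $S$ carries the subspace topology, so a map into $S$ is continuous exactly when its composite with the inclusion is. Its inverse is the restriction of the projection $X\times S\to X$ to $P_f$. This restriction lands in $h^{-1}(S)$, and it is injective on $P_f$ because $f$ is injective, so $s$ is determined by $x$. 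It is therefore continuous as well. Under $\phi$, the pullback map $f':P_f\to X$, the first projection, becomes the inclusion $h^{-1}(S)\hookrightarrow X$. The vertical map $P_f\to S$ becomes the restriction $h|:h^{-1}(S)\to S$.

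The second step applies the same argument to the left square. Its right vertical map is now $h|:h^{-1}(S)\to S$, and $g:T\to S$ is a subspace inclusion. So $X_{f\circ g}\cong (h|)^{-1}(T)=h^{-1}(T)$, where the equality uses $T\subseteq S$. Under this identification $\overline{g}$ becomes the inclusion $h^{-1}(T)\hookrightarrow h^{-1}(S)$. Since pullbacks are unique up to unique homeomorphism compatible with the structure maps, any given pullbacks $X_f$ and $X_{f\circ g}$ are identified with these models in a way that carries $f'$ and $\overline{g}$ to the inclusions.

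The only delicate point is topological. We must check that the subspace topology on $h^{-1}(S)\subseteq X$ agrees with the topology that $P_f$ inherits from $X\times S$. This is the homeomorphism claim in the first step, and it holds because the inclusion $f$ is an embedding. A further check is that $h^{-1}(T)$ has the same topology whether it is viewed as a subspace of $h^{-1}(S)$ or of $X$, which follows from transitivity of subspace topologies. Neither point is a serious obstacle.
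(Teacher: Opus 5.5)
Your proposal is correct and follows essentially the paper's route: the pullback is modelled as a subspace of the product, the homeomorphism is $x\mapsto(x,h(x))$ with inverse the projection, and the same identification is then made for $X_{f\circ g}$. There you reapply the first step to the left square, with right vertical map $h|\colon h^{-1}(S)\to S$, whereas the paper restricts from $S\times X$ to $T\times X$ via the outer rectangle; the difference is inessential, and your remark that continuity of $h|$ into $S$ needs $f$ to be an embedding makes explicit a point the paper covers only with ``both are continuous since $h$ is''.
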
 

\begin{proof} 
This is straightforward but a proof is included to be complete. By its definition as a pullback, 
\[
    X_{f}=\{(s,x)\in S\times X\mid f(s)=h(x)\}.
\] 
Since $s$ is a subspace inclusion, to simplify notation, identify $f(S)$ with $S$ and write $f(s)=s$. Then 
$X_{f}=\{(s,x)\in S\times X\mid s=h(x)\}=\{(h(x),x)\in S\times X\}$. Define maps \(\alpha: X_{f} \rightarrow h^{-1}(S)\) and \(\beta:h^{-1}(S) \rightarrow X_{f}\) by $\alpha(h(x),x)=x$ and $\beta(x)=(h(x),x)$. Both maps are well defined since $h(x)\in S$, both are continuous since $h$ is, and clearly $\alpha\circ\beta$ and $\beta\circ\alpha$ are the respective identity maps. Thus $X_{f}$ is homeomorphic to $h^{-1}(S)$, and under the homeomorphism $\alpha$ the map $f'$ is the inclusion of $h^{-1}(S)$ into~$X$. 

Next, consider $X_{f\circ g}$. In the iterated pullback defining $X_{f\circ g}$, the outer rectangle 
is also a pullback, so we may regard $X_{f\circ g}$ as being obtained from $X_{f}$ by  
restricting $S\times X$ to $T\times X$. The corresponding restrictions of $\alpha$ and $\beta$ 
then imply there is a homeomorphism $X_{f\circ g}\cong h^{-1}(T)$, and because this is obtained 
by restricting $\alpha$, in particular, the map 
\(\overline{g}:X_{f\circ }g \rightarrow X_{f}\) 
identifies with the inclusion of $h^{-1}(T)$ into $h^{-1}(S)$.  
\end{proof}  

\begin{proof}[Proof of Theorem~\ref{strictcube}]  
First consider the pushout $A$-$B$-$C$-$D$. Note that as $f'$ and $\overline{g}$ are subspace inclusions, so 
is the induced pushout map \(g:A \rightarrow C\). Since $f$ and $g$ are subspace inclusions, we have $D=B\cup_{A} C$ and the maps \(b:B \rightarrow D\) and \(c:C \rightarrow D\) are subspace inclusions. Define the spaces $H_{b}$, $H_{b\circ f}$, $H_{c}$ and $H_{c\circ g}$ by the iterated pullbacks 
\begin{equation}\label{cubepbs} 
    \begin{tikzcd}[row sep=3em, column sep = 3em]
        H_{b\circ f} \arrow[r] \arrow[d] & H_b \arrow[r] \arrow[d] & H \arrow[d, "h"] && H_{c\circ g} \arrow[r] \arrow[d] & H_c \arrow[r] \arrow[d] & H \arrow[d, "h"] \\
        A \arrow[r, "f"] & B \arrow[r, "b"] & D && A \arrow[r, "g"] & C \arrow[r, "c"] & D
    \end{tikzcd}
\end{equation} 
By Lemma~\ref{strictpb}, there are homeomorphisms 
\[
H_{b}\cong h^{-1}(B)\text{, \;}H_{b\circ f}\cong h^{-1}(A)\text{, \;}H_{g}\cong h^{-1}(C)\text{\; and \;}H_{c\circ g}\cong h^{-1}(A)
\] 
and the maps along the top rows 
in the left and right diagrams in~(\ref{cubepbs}) identify with the subspace inclusions 
\(h^{-1}(A) \rightarrow h^{-1}(B) \rightarrow H\) 
and 
\(h^{-1}(A) \rightarrow h^{-1}(C) \rightarrow H\)  
respectively. This has two implications. First, as $b\circ f=c\circ g$ the homeomorphisms for $H_{b\circ f}$ 
and $H_{c\circ g}$ with $h^{-1}(A)$ identify to give $H_{b\circ f}= H_{c\circ g}$. Second, 
since $D=B\cup_{A} C$ and $h$ is an epimorphism, we have $H=h^{-1}(B)\cup_{h^{-1}(A)} h^{-1}(C)$. 
Therefore the compatible homeomorphisms imply that there is a pushout 
\begin{equation*}
    \begin{tikzcd}[row sep=3em, column sep = 3em]
        H_{b\circ f} \arrow[r] \arrow[d] & H_b \arrow[d] \\
        H_c \arrow[r] & H.
    \end{tikzcd}
\end{equation*}  
Finally, using the given pushout $A$-$B$-$C$-$D$ as the bottom face of a cube, the four pullback squares 
in~(\ref{cubepbs}) as the four sides, and the pushout for $H$ as the top face, we obtain 
the right-hand commutative cube in the statement of the theorem. 

Next, consider the pushout $A'$-$A$-$C'$-$C$. Note in~(\ref{cubepbs}) that as $h$ is 
an epimorphism so is the induced map  \(H_{c} \rightarrow C\). Since $f'$ and $\overline{g}$ are subspace inclusion so are the induced pushout maps \(g:A \rightarrow C\) (the same map $g$ from above) and \(c':C' \rightarrow C\). The same argument as above then shows that there are iterated pullbacks 
\begin{equation}\label{cubepbs2} 
    \begin{tikzcd}[row sep=3em, column sep = 3em]
        H_{c\circ g\circ f'} \arrow[r] \arrow[d] & H_{c\circ g} \arrow[r] \arrow[d] & H_c \arrow[d] && H_{c\circ c'\circ \overline{g}} \arrow[r] \arrow[d] & H_{c\circ c'} \arrow[r] \arrow[d] & H_c \arrow[d] \\
        A' \arrow[r, "f'"] & A \arrow[r, "g"] & C && A' \arrow[r, "\overline{g}"] & C' \arrow[r, "c'"] & C
    \end{tikzcd}
\end{equation}
and as $g\circ f'=c'\circ \overline{g}$ we have $H_{c\circ g\circ f'}=H_{c\circ c'\circ \overline{g}}$ and there is a pushout 
\begin{equation*}
    \begin{tikzcd}[row sep=3em, column sep = 3em]
        H_{c\circ g\circ f'} \arrow[r] \arrow[d] & H_{c\circ g} \arrow[d] \\
        H_{c\circ c'} \arrow[r] & H_c.
    \end{tikzcd}
\end{equation*}  
Note that $H_{c\circ g}$ has already been identified $H_{b\circ f}$ and in such a way that their 
maps to $H_{c}$ also identify. Thus using the pushout $A'$-$A$-$C'$-$C$ as the bottom face 
of a cube, the four pullback squares in~(\ref{cubepbs2}) as the four sides, and the pushout for $H_{c}$ 
as the top face, we obtain the left-hand cube in the statment of the lemma, with the additional 
observation that the two cubes can be juxtaposed along the common face 
$H_{b\circ f}$-$H_{c}$-$A$-$C$.  
\end{proof}  

Theorem~\ref{strictcube} is now generalised. 

\begin{thm} \label{homotopycube} 
    Suppose that there is a diagram of iterated homotopy pushouts 
    \begin{equation*}
        \begin{tikzcd}[row sep=3em, column sep = 3em]
            A' \arrow[d, "\overline{g}"] \arrow[r, "f'"] & A \arrow[r, "f"] \arrow[d] & B \arrow[d] \\
            C' \arrow[r] & C \arrow[r] & D
        \end{tikzcd}
    \end{equation*}   
    and suppose that there is a map \(h:H \rightarrow D\). Then there is a homotopy commutative cube 
    \begin{equation*}
        \begin{tikzcd}[row sep=1em, column sep=1em]
            E' \arrow[rr] \arrow[dd] \arrow[dr] && E \arrow[rr] \arrow[dr] \arrow[dd] &&
            F \arrow[dd] \arrow[dr] \\
            & G' \arrow[rr, crossing over] && G \arrow[rr, crossing over] && H \arrow[dd, "h"] \\
            A' \arrow[rr] \arrow[dr] && A \arrow[rr] \arrow[dr] && B \arrow[dr]\\
            & C' \arrow[rr] \arrow[from=uu, crossing over] && C \arrow[rr] \arrow[from=uu, crossing over] && D
        \end{tikzcd}
    \end{equation*}
    where the sides are all homotopy pullbacks and the two top faces are homotopy pushouts. 
\end{thm}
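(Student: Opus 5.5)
The plan is to reduce Theorem~\ref{homotopycube} to the strict version, Theorem~\ref{strictcube}, by replacing the given data up to homotopy with strict data to which that theorem applies. Then we transport the resulting strict cube back along homotopy equivalences.

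\textbf{Step 1: rigidify the bottom row.} Replace the iterated homotopy pushout by a strict iterated pushout of subspace inclusions. Using mapping cylinders, replace $f'$ by a cofibration $A'\to M_{f'}$, and $\overline{g}$ by $A'\to M_{\overline{g}}$. Let $C_1=M_{\overline{g}}\cup_{A'}M_{f'}$, which is a strict pushout and a homotopy pushout. Its comparison with $C$ is a homotopy equivalence compatible with the maps from $A$ and $C'$. Next replace $A\simeq M_{f'}\to B$ by the cylinder inclusion $M_{f'}\to M_f$, where $M_f$ is the mapping cylinder of the composite $M_{f'}\to A\to B$. Set $D_1=M_f\cup_{M_{f'}}C_1$. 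The standard fact that homotopy pushouts are invariant under homotopy equivalence of diagrams gives a homotopy equivalence $\phi:D_1\to D$ commuting up to homotopy with the rest of the diagram. All maps in the new row are subspace inclusions, and the two squares are strict pushouts.

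\textbf{Step 2: replace $h$ by an epimorphism.} Pull $h$ back along $\phi$: take a homotopy inverse $\phi^{-1}$ and form $\phi^{-1}\circ h:H\to D_1$. Convert it to a fibration $p:H_1\to D_1$ via the path space construction, so $H_1\simeq H$. Then $p$ is surjective, hence an epimorphism, provided $D_1$ is path-connected; this is the case in our applications, and otherwise we argue componentwise. Since $p$ is a fibration, strict pullbacks along $p$ are homotopy pullbacks.

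\textbf{Step 3: apply Theorem~\ref{strictcube} and transfer back.} Applying Theorem~\ref{strictcube} to the strict row and to $p$ yields a strictly commuting double cube. Its sides are strict pullbacks along the fibration $p$, hence homotopy pullbacks. By Mather's Cube Lemma, applied to this strictly commuting (hence coherent) cube, its top faces are homotopy pushouts; alternatively, the strict pushouts there are along subspace inclusions which are cofibrations, being restrictions of a fibration over cofibrations. Finally compose the bottom face with the homotopy equivalences $M_{\overline{g}}\simeq C'$, $M_{f'}\simeq A$, $C_1\simeq C$, $M_f\simeq B$, $D_1\simeq D$, and $H_1\simeq H$. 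Homotopy pullbacks and pushouts are preserved under such replacements, and only homotopy commutativity of each face is asserted, so no coherence issue arises. This gives the required homotopy commutative cube.

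\textbf{Main obstacle.} The hardest step is Step 3's claim that the strict pushouts in the top face are genuine homotopy pushouts. The restriction $p^{-1}(M_{f'})\to p^{-1}(M_f)$ of a fibration over a closed cofibration is again a closed cofibration; this holds for Hurewicz fibrations over NDR pairs, via Str\o m's theory. If that fails, I would fall back on Mather's second cube theorem for the strictly commuting cube.
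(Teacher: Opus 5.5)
Your proposal is correct and follows essentially the same route as the paper. You rigidify the iterated homotopy pushout with mapping cylinders into a strict iterated pushout of subspace inclusions, replace $h$ (composed with the equivalence to the new $D$) by its mapping path space fibration to obtain an epimorphism, and apply Theorem~\ref{strictcube}. Your extra justifications fill in points the paper leaves implicit: that strict pullbacks along the fibration are homotopy pullbacks, that the top faces are homotopy pushouts by Mather's cube theorem applied to a strictly commuting cube, and that path-connectedness is needed for surjectivity.
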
 

\begin{proof} 
    Replace $A$ and $C'$ by the mapping cylinders $M_{f'}$ and $M_{\overline{g}}$ of $f'$ and $\overline{g}$ respectively and replace $C$ by the pushout $M$ of \(A' \rightarrow M_{f'}\) and \(A' \rightarrow M_{\overline{g}}\). Replace $B$ by the mapping cylinder $M_{d}$ of the composite  
    \[
        d: M_{f'} \xlongrightarrow{\simeq}{A} \xlongrightarrow{f} {B}
    \]  
    and replace $D$ by the pushout $N$ of \(M_{f'} \rightarrow M_{d}\) and \(M_{f'} \rightarrow M\). Finally, replace $H$ by the mapping path space $P$ of the composite 
    \[
        H \xlongrightarrow{h} D \xlongrightarrow{\simeq} N.
    \]
    Then there is an iterated diagram of (strict) pushouts 
    \begin{equation*}
        \begin{tikzcd}[row sep=3em, column sep = 3em]
            A' \arrow[d] \arrow[r] & M_{f'} \arrow[r] \arrow[d] & M_d \arrow[d] \\
            M_{\overline{g}} \arrow[r] & M \arrow[r] & N
        \end{tikzcd}
    \end{equation*}  
    where each map is a subspace inclusion, there is an epimorphism \(P \rightarrow N\), and this data is equivalent up to homotopy to the given iterated homotopy pushout and map $h$. Now apply Theorem~\ref{strictcube}. 
\end{proof}

\bibliographystyle{amsplain}
\bibliography{bib}

@article{t20,
  title={Homotopy fibrations with a section after looping},
  author={Theriault, S.},
  journal={Mem. Amer. Math. Soc.}, 
  volume={299}, 
  number={1499}, 
  year={2024}
}

@article{bt2,
      title={Homotopy groups of highly connected {P}oincare duality complexes}, 
      author={P. Beben and S. Theriault},
      journal={Documenta Mathematica}, 
      volume={27}, 
      pages={183-211},
      year={2022} 
}

@article{mather,
  title={Pull-backs in homotopy theory},
  author={Mather, M.},
  journal={Canadian Journal of Mathematics},
  volume={28},
  number={2},
  pages={225--263},
  year={1976},
  publisher={Cambridge University Press}
}

@article{wall6mfld,
  title={Classification problems in differential topology. {V}. {O}n certain $6$-manifolds},
  author={Wall, C.T.C.},
  journal={Inventiones Mathematicae},
  volume={1},
  number={4},
  pages={355--374},
  year={1966},
  publisher={Springer}
}

@article{quinn,
  title={Surgery on {P}oincar{\'e} and normal spaces},
  author={Quinn, F.},
  journal={Bulletin of the American Mathematical Society},
  volume={78},
  number={2},
  pages={262--267},
  year={1972}
}

@incollection{js,
  title={Bundles over connected sums},
  author={Jeffrey, L. and Selick, P.},
  booktitle={Toric Topology and Polyhedral Products, {F}ields {I}nstitute {C}ommunications, vol. 89},
  series={Toric Topology and Polyhedral Products},
  volume={89},
  pages={149--156},
  year={2024},
  publisher={Springer}
}

@article{duan,
  title={Circle actions and suspension operations on smooth manifolds},
    DOI={10.1017/S0305004125101849}, 
    journal={Mathematical Proceedings of the Cambridge Philosophical Society}, 
    author={Duan, H.}, 
    year={2026}, 
    pages={1–20, doi:10.1017/S0305004125101849}
}

@article{chenfib,
    title={The rational homotopy type of homotopy fibrations over connected sums},
    author={Chenery, S.},
    volume={66},
    number={1},
    journal={Proceedings of the Edinburgh Mathematical Society}, 
    publisher={Cambridge University Press},
    year={2023},
    pages={133–142}
}

@book{selick,
    AUTHOR = {Selick, P.},
     TITLE = {Introduction to homotopy theory},
    SERIES = {Fields Institute Monographs},
    VOLUME = {9},
 PUBLISHER = {American Mathematical Society, Providence, RI},
      YEAR = {1997}
}

@article{huangtheriault,
  title={Homotopy of manifolds stabilized by projective spaces},
  author={Huang, R. and Theriault, S.},
  journal={Journal of Topology}, 
  volume={16}, 
  pages={1237--1257}, 
  year={2023}
}

@article{basu-ghosh,
  title={Sphere fibrations over highly connected manifolds},
  author={Basu, S. and Ghosh, A.},
  journal={Journal of the London Mathematical Society},
  volume={110}, 
  pages={Paper No. e70002},
  year={2024}
}

@article{klpt,
  title={Stable classification of 4-manifolds with 3-manifold fundamental groups},
  author={Kasprowski, D. and Land, M. and Powell, M. and Teichner, P.},
  journal={Journal of Topology},
  volume={10},
  number={3},
  pages={827--881},
  year={2017}
}

@article{ganea65,
  title={A generalization of the homology and homotopy suspension},
  author={Ganea, T.},
  journal={Commentarii Mathematici Helvetici},
  volume={39},
  pages={295-322},
  year={1965}
}

@article{galaz-garcia--reiser,
  title={Free torus actions and twisted suspensions},
  author={Galaz-Garc{\'\i}a, F. and Reiser, P.},
  journal={Forum Sigma Mathematics}, 
  volume={13}, 
  pages={Paper No. e3, 31pp.}, 
  year={2025}
}

@article{bosio_meersseman,
    author = {Bosio, F. and Meersseman, L.},
    title = {Real quadrics in \(\mathbb{C}^n\), complex manifolds and convex polytopes},
    volume = {197},
    journal = {Acta Mathematica},
    number = {1},
    publisher = {Institut Mittag-Leffler},
    pages = {53 -- 127},
    year = {2006},
    doi = {10.1007/s11511-006-0008-2},
    URL = {https://doi.org/10.1007/s11511-006-0008-2}
}

@article{gonzalezacuna,
    author = {Gonz{\'a}lez Acu{\~n}a, F.},
    title = {Open Books},
    journal = {University of Iowa Notes},
    year = {1975},
    pages = {12 pages}
}

@article{huang_inertness24,
  title={Comparison techniques on inert top cell attachments},
  author={Huang, R.},
  journal={arXiv:2408.10716 [math.AT]},
  year={2024}
}

@article{huang_theriault_blowups,
  title={Homotopy of blow ups after looping},
  author={Huang, R. and Theriault, S.},
  journal={arXiv:2308.14531 [math.AT]},
  year={2023}
}

@article{huang_theriault_stability, 
   title={Stabilization of {P}oincar\'{e} Duality complexes and homotopy gyrations}, 
   author={Huang, R. and Theriault, S.}, 
   journal={to appear in Journal of the London Mathematical Society, arXiv:2504.09786 [math.AT]}, 
   year={2025} 
}

@article{ChenTher:gy_stab,
  title={Gyration Stability for Projective Planes},
  author={Chenery, S. and Theriault, S.},
  journal={Topology and its Applications},
  volume={369},
  pages={Paper No. 109420},
  year={2025},
  publisher={Elsevier}
}

@article{stanton_theriault,
  title={Loop spaces of $n$-dimensional {P}oincar{\'e} duality complexes whose $(n-1)$-skeleton is a co-{$H$}-space},
  author={Stanton, L. and Theriault, S.},
  journal={Transactions of the American Mathematical Society}, 
  volume={379}, 
  number={4}, 
  pages={2683-2715},
  year={2026}
}

@article{chenery:fico,
  title={On {F}ico's {L}emmata and the homotopy type of certain gyrations},
  author={Chenery, S.},
  journal={Bulletin of the London Mathematical Society},
  volume={58},
  number={1},
  pages={e70265},
  year={2026},
  publisher={Wiley Online Library}
}

@article{morava_SymplecticCobordism,
  title={Cobordism of Symplectic Manifolds and Asymptotic Expansions},
  author={Morava, J.J.},
  journal={Proceedings of the Steklov Institute of Mathematics},
  volume={225},
  number={0},
  pages={276--283},
  year={1999},
  publisher={Russian Academy of Sciences, Steklov Mathematical Institute}
}

@article{HJSX,
  title={Flat connections and the commutator map for {$\mathrm{SU}(2)$}},
  author={Ho, N.-K. and Jeffrey, L. and Selick, P. and Xia, E. Z.},
  journal={The Quarterly Journal of Mathematics},
  volume={72},
  number={1-2},
  pages={163--197},
  year={2021},
  publisher={Oxford University Press UK}
}

@article{atiyah-bott,
  title={The {Y}ang-{M}ills equations over {R}iemann surfaces},
  author={Atiyah, M. F. and Bott, R.},
  journal={Philosophical Transactions of the Royal Society of London. Series A, Mathematical and Physical Sciences},
  volume={308},
  number={1505},
  pages={523--615},
  year={1983},
  publisher={The Royal Society London}
}

\end{document}